\theoremstyle{plain}
\newtheorem{theorem}{Theorem}[section]
\newtheorem{proposition}[theorem]{Proposition}
\newtheorem{lemma}[theorem]{Lemma}
\newtheorem{corollary}[theorem]{Corollary}
\newtheorem{conjecture}[theorem]{Conjecture}
\newtheorem{fact}[theorem]{Fact}
\theoremstyle{definition}
\newtheorem{definition}[theorem]{Definition}
\newtheorem{remark}[theorem]{Remark}
\newtheorem{claim}{Claim}[theorem]
\DeclareDocumentCommand \amal { m m O{} } {\ensuremath{\ifthenelse{\isempty{#3}}{\mathrm{Amal}}{\mathrm{Amal}_{#3}} (#1,#2)}}
\newcommand{\formula}[1]{\text{\fontfamily{cmss}\selectfont \textup{#1}}}
\newcommand{\mso}{\ensuremath{\operatorname{MSO}_{1}}}
\newcommand{\cmso}{\ensuremath{\operatorname{CMSO}_{1}}}
\newcommand{\var}{\ensuremath{\mathrm{Var}}}
\newcommand{\free}{\ensuremath{\mathrm{Free}}}
\newcommand{\bound}{\ensuremath{\mathrm{Bound}}}
\newcommand{\dash}{\nobreakdash-\hspace{0pt}}
\newcommand{\cl}{\operatorname{cl}}
\newcommand{\id}{\mathrm{Id}}
\newcommand{\cU}{\mathcal{U}}
\newcommand{\bN}{\mathbb{N}}
\tikzset{style1/.style={circle,fill=blue!20,minimum size=4pt}}
\tikzset{style2/.style={circle,fill=black!255,inner esp=0pt,minimum size=1pt}}
\tikzset{style3/.style={circle,fill=white!255,inner sep=0pt,minimum size=1pt}}
\def\Ddots{\mathinner{\mkern1mu\raise\p@
\vbox{\kern7\p@\hbox{.}}\mkern2mu
\raise4\p@\hbox{.}\mkern2mu\raise7\p@\hbox{.}\mkern1mu}}
\theoremstyle{plain}
\newtheorem*{rep@theorem}{\rep@title}
\newcommand{\newreptheorem}[2]{%
\newenvironment{rep#1}[1]{%
 \def\rep@title{#2 \ref{##1}}%
 \begin{rep@theorem}}%
 {\end{rep@theorem}}}
\author[Funk]{Daryl Funk}
\author[Matthews]{Angus Matthews}
\author[Mayhew]{Dillon Mayhew}
\title[Myhill-Nerode for hypergraphs]
{Myhill-Nerode for hypergraphs and an application to gain-graphic matroids}
\begin{document}

\begin{abstract}
We present a Myhill-Nerode theorem for hypergraphs.
The theorem involves an operation which takes two input structures and produces a hypergraph as output.
Using this operation, we define a Myhill-Nerode-type equivalence relation and show that if a class of hypergraphs is definable in the counting monadic second-order logic of hypergraphs, then the equivalence relation has finite index.
We apply this tool to classes of gain-graphic matroids, and show that if the group $\Gamma$ is not uniformly locally finite, then the class of $\Gamma$\dash gain-graphic matroids is not monadically definable.
(A group is uniformly locally finite if, for every $k$, there is a maximum size amongst subgroups generated by at most $k$ elements.)
In addition, we define the conviviality graph of a group, and show that if the group $\Gamma$ has an infinite conviviality graph, then the class of $\Gamma$\dash gain-graphic matroids is not monadically definable.
This will be useful in future constructions.
\end{abstract}

\maketitle

\section{Introduction}

The Myhill-Nerode theorem is a key tool for demonstrating that a language of strings is not regular.
Since a language is regular if and only if it can be defined by a sentence in the monadic second-order language of strings, this allows us to prove that certain languages are not monadically definable.

In this article, we present a version of the Myhill-Nerode theorem for hypergraphs, which we will use to prove that certain classes of hypergraphs are not monadically definable.
In particular, we will prove non-definability results when the hypergraphs are gain-graphic matroids.
In the classical Myhill-Nerode theorem, $\mathcal{L}$ is a language of strings and two strings,  $\mathbf{w}_{1}$ and $\mathbf{w}_{1}$, are equivalent relative to $\mathcal{L}$, if, for every possible string $\mathbf{z}$, either both of the concatenations $\mathbf{w}_{1}\mathbf{z}$ and $\mathbf{w}_{2}\mathbf{z}$ are in $\mathcal{L}$, or neither is.
The language is regular if and only if this equivalence relation has finite index.
To present a theorem of this type for hypergraphs, we need an operation for hypergraphs that plays the same role as concatenation in the classical theorem.
Let $C$, $U$, and $V$ be finite sets where $U\cap V = \emptyset$.
Let $c$ be a function from $2^{U}$ to $C$ and let $d$ be a function from $2^{V}\times C$ to $\{0,1\}$.
We can glue $(U,c)$ and $(V,d)$ together to produce a hypergraph $(U,c)\boxplus (V,d)$ with the ground set $U\cup V$, where the hyperedges are the sets of the form $X\cup Y$ satisfying $X\subseteq U$, $Y\subseteq V$, and $d(Y,c(X))=1$.
This operation generalises natural matroid operations such as direct sums, $2$\dash sums, and proper amalgams.

Let $\mathcal{M}$ be a class of hypergraphs and let $C$ be a finite set.
We define an equivalence relation on all pairs $(U,c)$, where $U$ is a finite set and $c$ is a function from $2^{U}$ to $C$.
We write $(U_{1},c_{1})\sim_{\mathcal{M}, C} (U_{2},c_{2})$ if, for every possible pair $(V,d)$, we have that $(U_{1},c_{1})\boxplus (V,d)$ is in $\mathcal{M}$ if and only if $(U_{2},c_{2})\boxplus (V,d)$ is in $\mathcal{M}$.
Lemma~\ref{thm:sharp-Myhill-Nerode} tells us that if $\mathcal{M}$ is definable in the monadic second-order logic of hypergraphs, then $\sim_{\mathcal{M}, C}$ has finite index.

We note some results in the literature that have commonalities with our version of Myhill-Nerode.
First, Lemma~\ref{thm:sharp-Myhill-Nerode} is a generalisation of Lemmas~1.3 and 1.4 in~\cite{MNW18}, since those lemmas are specific to particular types of matroid summing operations.
Lemma~\ref{thm:sharp-Myhill-Nerode} is independent from the work in~\cite{vBDFGR15}*{Corollary~3}, since the notion of a hypergraph sum in that work bounds the number of hyperedges that intersect both sides of the sum.
Our notion of a sum does not require any such bound.
Our lemma is also independent of the tool created by Kotek and Makowsky~\cite{KM14}*{Theorem~3.5}, as the binary operation $\boxplus$ is not \emph{smooth} (using their language).

Starting in Section~\ref{sec:amalgams}, we apply Lemma~\ref{thm:sharp-Myhill-Nerode} to questions of monadic definability for classes of matroids.
We use $\cmso$ to refer to the \emph{counting monadic second-order logic} of matroids (and more generally, hypergraphs).
This language has predicates that let us say when a subset of the domain has cardinality congruent to $p$ modulo $q$, for any appropriate pair $p$ and $q$.
The fragment of $\cmso$ that does not use these predicates is denoted by $\mso$.
(This language has been at various times denoted by $\mathrm{MSOL}$, $\mathrm{MS}_{M}$, and $\mathrm{MS}_{0}$.)

Courcelle's Theorem for graphs~\cite{Cou90} provided much of the original motivation for studying the monadic second-order logic of graphs.
There are matroidal analogues of this theorem which provide a similar motivation for wanting to know when a property of matroids can be defined in $\cmso$~\cites{FMN22, Hli03a, Kra12, Str11}.
In particular, we are motivated to understand which minor-closed classes of matroids are definable in $\cmso$.
(This question is not interesting in the context of graphs, because the Robertson-Seymour Theorem shows that any minor-closed class of graphs has finitely many excluded minors and is therefore monadically definable.)
For every group $\Gamma$ there is a corresponding class of \emph{$\Gamma$-gain-graphic matroids}, just as for every field $\mathbb{F}$, there is a class of $\mathbb{F}$\dash representable matroids.
In a representable matroid the elements of the matroid are associated with vectors over $\mathbb{F}$, and in the gain-graphic case they are associated with elements from $\Gamma$.
In either case, we are essentially providing algebraic coordinates that specify the relations between matroid elements.
The two types of classes play central and symmetric roles in structural matroid theory~\cites{GGW13, GNW24, KK82}.
Despite this, gain-graphic classes have not received as much attention as representable classes.
A result by Mayhew, Newman, and Whittle shows that the class of $\mathbb{F}$\dash representable matroids is $\cmso$\dash definable if and only if $\mathbb{F}$ is finite.
The following conjectures, from~\cite{FMN21}, were made by analogy with this result.

\begin{conjecture}\label{conj:finite-definable}
Let $\Gamma$ be a finite group.
The class of $\Gamma$\dash gain-graphic matroids is \mso\dash definable.
\end{conjecture}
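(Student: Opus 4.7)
The plan is to construct an explicit $\mso$ sentence in three stages: recognise that $M$ is a frame matroid, extract the underlying graph structure from a frame, and witness a $\Gamma$\dash gain labeling whose balanced-cycle structure matches the circuits of $M$. The finiteness of $\Gamma$ is essential at two points, namely in encoding the gain labeling itself and in encoding a switching potential used to detect balance, and both will require $|\Gamma|$ monadic quantifiers.

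First I would existentially quantify a \emph{frame} of $M$: a basis $B$ such that every element of $M$ lies in $\cl(B')$ for some $B' \subseteq B$ with $|B'| \leq 2$. This is $\mso$-expressible using the closure operator, which is definable from the independence predicate. Conditional on such a $B$, each element of $E(M) \setminus B$ is either a loop (parallel to a single frame element, giving a loop-edge at that vertex of the derived graph) or a link (in the closure of exactly two frame elements, giving an edge between them); both the vertex set (namely $B$) and the incidence relation on $E(M) \setminus B$ are therefore readable in $\mso$. I would then quantify a partition $\{E_g : g \in \Gamma\}$ of $E(M) \setminus B$, interpreting $E_g$ as the set of edges of gain $g$ under some uniform orientation convention.

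To detect which cycles of the derived graph are matroid circuits, I would use the characterisation that a connected edge set $S$ is balanced precisely when it admits a switching potential $\lambda \colon V(S) \to \Gamma$ with $\lambda(v) = g \cdot \lambda(u)$ for every edge $uv \in S$ of gain $g$. Such a $\lambda$ is encoded by a further quantification over $|\Gamma|$ subsets of $B$, so balance of a quantified edge set $S$ is uniformly $\mso$-expressible. The main sentence then universally quantifies $C \subseteq E(M)$ and asserts that $C$ is a circuit of $M$ if and only if either $C$ is a single balanced cycle in the derived graph, or $C$ is a handcuff-shaped subgraph composed of two vertex-disjoint unbalanced cycles joined by a path, or of two unbalanced cycles sharing exactly one vertex.

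The main obstacle will lie in verifying correctness rather than $\mso$\dash expressibility, and in the bookkeeping of edge cases: balanced versus unbalanced loops, digons, and wholly balanced versus wholly unbalanced connected components. One must in particular choose an orientation convention making the switching relation $\lambda(v) = g \cdot \lambda(u)$ unambiguous when edges lack intrinsic direction, and confirm via Zaslavsky's circuit characterisation for frame matroids of gain graphs that the resulting sentence captures exactly the $\Gamma$\dash gain-graphic matroids, rather than a strict super\dash or sub\dash class.
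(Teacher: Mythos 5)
The paper does not actually prove this statement: it appears only as a conjecture, and the authors explicitly claim it without proof, noting that their argument depends on a lengthy structural theorem (still in progress) controlling the biased-graph representations of frame matroids, which is then used to build a monadic transduction from a frame matroid to a representation. Your proposal founders at exactly the point that this structural theorem is meant to address. Your first step --- existentially quantifying a basis $B\subseteq E(M)$ such that every element lies in $\cl(B')$ for some $B'\subseteq B$ with $|B'|\le 2$ --- assumes that the ``frame'' (the joints of the graph, i.e.\ one unbalanced loop per vertex) can be found inside the ground set of $M$. In general it cannot: a frame matroid is a \emph{restriction} of a matroid possessing such a basis, and the joints need not survive the restriction. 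Concretely, $U_{3,4}=M(C_{4})$ is graphic, hence $\Gamma$\dash gain-graphic for every $\Gamma$ (give every edge the identity gain, so that every cycle is balanced and the frame matroid is the cycle matroid), yet no basis $B$ of $U_{3,4}$ has its fourth element in the closure of only two elements of $B$, since the unique circuit is the entire ground set. Your sentence would therefore reject $U_{3,4}$. Because \mso{} cannot quantify over elements outside the ground set, this cannot be repaired by ``adding the joints back in''; recovering a graph structure from the matroid alone is the hard part, and it is precisely what the authors' transduction is for.

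A second, independent difficulty: even restricted to matroids that do contain an internal frame, a frame matroid typically has many inequivalent biased-graph representations, and your sentence only searches over representations whose joints happen to lie in $E(M)$. One must argue that if $M$ is $\Gamma$\dash gain-graphic at all, then some representation of this restricted form exists and is found by the quantifiers --- again a representation-theoretic fact, not a logical one. The parts of your proposal that use finiteness of $\Gamma$ (guessing the gain function as a partition of the edges into $|\Gamma|$ classes, and guessing a switching potential on the vertices to test balance) are sound, and they are indeed where finiteness must enter; the genuine gap lies entirely in the passage from the matroid to the graph.
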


\begin{conjecture}\label{conj:infinite-nondefinable}
Let $\Gamma$ be an infinite group.
The class of $\Gamma$\dash gain-graphic matroids is not \mso\dash definable.
\end{conjecture}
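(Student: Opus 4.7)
The plan is to prove the Myhill--Nerode-style characterisation of non-definability for \cmso{} that is advertised in the abstract, and then to use it to produce an infinite family of pairwise inequivalent $\Gamma$\dash gain-graphic matroids. The Myhill--Nerode step formalises the following: for each quantifier rank $q$ there is a finite equivalence $\sim_{q}$ on hypergraphs (rooted along a fixed interface) such that $H_{1} \sim_{q} H_{2}$ implies $\amal{H_{1}}{K} \models \varphi \iff \amal{H_{2}}{K} \models \varphi$ for every sentence $\varphi$ of rank at most $q$ and every boundary test $K$. Consequently, any \cmso{}\dash definable class is a union of finitely many $\sim_{q}$-classes, so exhibiting infinitely many pairwise distinguishable matroids rules out \cmso{}\dash definability (and hence also \mso{}\dash definability).

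\smallskip

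\noindent The second step is the combinatorial heart of the argument: for each infinite group $\Gamma$, construct an infinite sequence $M_{1}, M_{2}, \ldots$ of $\Gamma$\dash gain-graphic matroids together with boundary gadgets $K_{ij}$ witnessing that, for $i \neq j$, amalgamating $K_{ij}$ with $M_{i}$ remains $\Gamma$\dash gain-graphic while amalgamating with $M_{j}$ does not (or vice versa). I would split on the structure of $\Gamma$. When $\Gamma$ is not uniformly locally finite, there is a fixed $k$ for which $\Gamma$ has $k$\dash generated subgroups of unbounded order; encode each such subgroup $H_{i}$ via a small gain graph on $k$ edges, and design the boundary gadget so that any $\Gamma$\dash gain representation of the amalgam is forced to realise $H_{i}$ as a subgroup, where the unbounded orders then separate the $M_{i}$. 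When the conviviality graph of $\Gamma$ is infinite, use the infinite supply of locally distinguishable pairs it records to encode a different ``conviviality configuration'' into each $M_{i}$, again by a gadget that rigidifies the allowable gain labels along the interface.

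\smallskip

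\noindent The main obstacle is the residual case of an infinite group that is \emph{simultaneously} uniformly locally finite and has a finite conviviality graph. For such $\Gamma$ the amount of local information visible through any $k$\dash bounded gadget is exhausted by a finite list of types, so both strategies above collapse: neither subgroup-order nor pairwise conviviality yields an unbounded supply of distinguishers. Progress here seems to demand a genuinely new global invariant of $\Gamma$\dash gain graphs, sensitive to how a finite local vocabulary is glued together across an unbounded structure, and this is presumably why the statement remains a conjecture despite the two partial results the paper proves.
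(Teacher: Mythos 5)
The statement you are attempting to prove is a conjecture that the paper explicitly asserts to be \emph{false}. In the introduction the authors state that if $\Gamma$ is the direct product of infinitely many copies of $\mathbb{Z}_{2}$, then the class of $\Gamma$\dash gain-graphic matroids \emph{is} \mso\dash definable (they defer the proof to forthcoming work relying on a structural representation theorem, but they are unambiguous that Conjecture~\ref{conj:infinite-nondefinable} fails). That group is infinite, uniformly locally finite (any $k$ of its elements generate a subgroup of order at most $2^{k}$), and falls squarely into what you call the residual case. So the obstacle you identify in your final paragraph is not a gap in technique awaiting ``a genuinely new global invariant'': the residual case contains genuine counterexamples, and no argument can close it. The honest deliverable here is the pair of partial theorems, not the conjecture.

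Your first two steps do track what the paper actually proves. The Myhill--Nerode mechanism you sketch is Lemma~\ref{thm:sharp-Myhill-Nerode} (formulated via coloured sums over a finite colour set rather than a rooted interface and quantifier rank, but the same idea), and your two families of distinguishers correspond to Theorem~\ref{thm:uniformly-locally-finite} and Theorem~\ref{thm:conviviality-graph}. One technical caveat even there: in the non-uniformly-locally-finite case you cannot directly run the unbounded-$k$-generated-subgroup argument inside $\Gamma$ itself, because $\Gamma$ may be locally finite without being uniformly so, in which case no single finitely generated subgroup is infinite. The paper first passes to an ultrapower $\Gamma^{\mathcal{U}}$, which fails to be locally finite yet yields the same class of gain-graphic matroids, and then encodes word length with respect to a finite generating set of an infinite finitely generated subgroup into the gadgets. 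These are repairable refinements of a correct partial strategy; the fatal problem is only that the target statement itself is false.
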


The first and third authors have shown that Conjecture~\ref{conj:finite-definable} is true.
However, this positive result requires a structural theorem giving us control over the representations of frame matroids by biased graphs.
This will allow us to construct a monadic transduction taking frame matroids as input and producing their biased-graphic representations as output.
The proof of this structural theorem will be lengthy and is still work in progress, so for now we claim this definability result without proof.

In contrast to this positive result, Conjecture~\ref{conj:infinite-nondefinable} is false.
If, for example, $\Gamma$ is the direct product of an infinite number of copies of $\mathbb{Z}_{2}$, then the class of $\Gamma$\dash gain-graphic matroids is \mso\dash definable.
In fact, the first and third authors and Ben-Shahar have built a hierarchy of infinite groups, each of which gives rise to an \mso\dash definable class of gain-graphic matroids.
This proof again requires the structural representation theorem, so we do not provide a proof here.

In this article we instead present some negative results: proving that for certain infinite groups $\Gamma$, the class of $\Gamma$\dash gain-graphic matroids is not \cmso\dash definable.
We do so by exhibiting representatives to demonstrate that there are infinitely many equivalence classes under the relation $\sim_{\mathcal{M}, C}$.

Let $\Gamma$ be a (multiplicative) group.
Then $\Gamma$ has \emph{finite exponent} if there is some positive integer $p$ such that $h^{p}$ is the identity for every $h\in \Gamma$.
If every finite subset of $\Gamma$ generates a finite subgroup then $\Gamma$ is \emph{locally finite}.
Assume there is a function $g_{\Gamma}$ taking positive integers to positive integers such that any subgroup of $\Gamma$ generated by at most $k$ elements has size at most $g_{\Gamma}(k)$.
In this case $\Gamma$ is \emph{uniformly locally finite}.
It is immediate that a uniformly locally finite group is locally finite.
Moreover, any subgroup generated by a single element has order at most $g_{\Gamma}(1)$, and hence we see that $\Gamma$ has finite exponent.
So uniform local-finiteness is a sufficient condition for $\Gamma$ to be locally finite with finite exponent.
The restricted version of the famous Burnside problem says that it is also necessary.
This result is known to be true thanks to the work of Zel\cprime manov~\cites{Zel90, Zel91}.
To reiterate: a group is uniformly locally finite if and only if it has finite exponent and is locally finite.

\begin{reptheorem}{thm:uniformly-locally-finite}
Let $\Gamma$ be an infinite group that is not uniformly locally finite.
The class of $\Gamma$\dash gain-graphic matroids is not \cmso\dash definable.
\end{reptheorem}

It is reasonable to ask if the other direction of Theorem~\ref{thm:uniformly-locally-finite} holds: is it true for any $\Gamma$ that if the class of $\Gamma$\dash gain-graphic matroids is not \cmso\dash definable, then $\Gamma$ is not uniformly locally finite?
This is more naturally asked in the contrapositive: if $\Gamma$ is uniformly locally finite, then is the class of $\Gamma$\dash gain-graphic matroids \cmso\dash definable?
A counting argument shows that this cannot be the case.
There are uncountably many infinite groups that are uniformly locally finite.
It is not hard to show that in addition, there are uncountably many distinct classes of $\Gamma$\dash gain-graphic matroids, where $\Gamma$ is uniformly locally finite.
Since there are only countably many \cmso\dash sentences, it follows that the converse of Theorem~\ref{thm:uniformly-locally-finite} cannot hold.

In fact, it is possible to go further, and explicitly construct an infinite group $\Gamma$ such that $\Gamma$ is uniformly locally finite and the class of $\Gamma$\dash gain-graphic matroids is not \cmso\dash definable.
We do not provide the description of such a group here, but we develop a tool that will aid in its construction.
Let $F$ be a finite subgroup of an infinite group $\Gamma$.
The \emph{$F$\dash conviviality graph} of $\Gamma$ carries information about how copies of $F$ are embedded in $\Gamma$: specifically, how these copies of $F$ relate to other finite subgroups of $\Gamma$.
As a second application of Lemma~\ref{thm:sharp-Myhill-Nerode} we prove the following result.

\begin{reptheorem}{thm:conviviality-graph}
Let $\Gamma$ be a group.
If $\Gamma$ has a finite subgroup $F$ such that the $F$\dash conviviality graph of $\Gamma$ is infinite, then the class of $\Gamma$\dash gain-graphic matroids is not \cmso\dash definable.
\end{reptheorem}

The structure of the paper is as follows: In Section~\ref{sec:prelims} we cover some fundamental notions of hypergraphs, monadic logic, matroids, and biased graphs.
We use ultrafilters and ultraproducts to simplify the proof of Theorem~\ref{thm:uniformly-locally-finite}, and these concepts are explained in Section~\ref{subsec:ultrapowers}.
Section~\ref{sec:myhill-nerode} gives the context and proof for an analogue of the Myhill-Nerode characterisation for monadically defined classes of hypergraphs.
Section~\ref{sec:amalgams} is a purely matroidal section, establishing properties of the `gluing' operation that we use.
In Section~\ref{sec:ULF} we use our assembled tools to prove Theorem~\ref{thm:uniformly-locally-finite}.
Section~\ref{sec:convivial} introduces the notion of an $F$\dash conviviality graph and proves Theorem~\ref{thm:conviviality-graph}.

\section{Preliminaries}
\label{sec:prelims}

We write $\bN$ for the set of positive integers.
If $n$ is in $\bN$, we write $[n]$ for $\{1,2,\ldots, n\}$.
We write $2^{U}$ for the power set of the set $U$.
If $I$ is a set and $i$ is an element then we write $I+i$ for $I\cup\{i\}$.
We regard each function as a set of ordered pairs.
So if $\sigma\colon X\to Y$ is a function and $x$ is an element not in $X$ then $\sigma+(x,y)$ is the function with domain $X+x$ which takes each element of $x$ to its image under $\sigma$ and which takes $x$ to $y$.
Graphs may have loops and parallel edges.
If $G$ is a graph and $X$ is a set of edges, then $G[X]$ is the subgraph with $X$ as its set of edges.
The vertices of $G[X]$ are exactly the vertices of $G$ that are incident with at least one edge in $X$.
We will very frequently blur the distinction between sets of edges and subgraphs.
For example, a cycle may be a set of edges or it may be a subgraph, according to which is more convenient for us.

\subsection{Hypergraphs}
A \emph{hypergraph} consists of a finite set $E$ and a collection $\mathcal{I}$ of subsets of $E$.
We refer to $E$ as the \emph{ground set} of the hypergraph, and call the members of $\mathcal{I}$ the \emph{hyperedges}.

The foundations of matroid theory can be found in Oxley~\cite{Oxl11}.
A \emph{matroid} is a hypergraph where the collection of hyperedges is non-empty and is closed downwards under subset containment, and furthermore, whenever $I$ and $J$ are hyperedges satisfying $|I|<|J|$, then there is an element $e\in J-I$ such that $I+e$ is a hyperedge.
The hyperedges of a matroid are called \emph{independent sets}.
The \emph{dependent} subsets are the subsets of the ground set that are not independent.
A dependent subset that does not properly contain a dependent subset is a \emph{circuit}.
A (matroidal) \emph{loop} is an element $e$ such that $\{e\}$ is a circuit.
A \emph{coloop} is an element that is in no circuit.
A matroid is \emph{simple} if every subset of size at most two is independent.

Let $M=(E,\mathcal{I})$ be a matroid.
If $X$ is a subset of $E$ then the \emph{rank} of $X$, written $r(X)$, is the maximum cardinality of an independent subset of $X$.
Thus $r(X)=|X|$ if and only if $X$ is independent.
We write $M|X$ for the matroid
\[
(X,\{I\in \mathcal{I}\colon I\subseteq X\}).
\]
and we refer to this as the \emph{restriction} of $M$ to $X$.
A \emph{flat} is a subset $F\subseteq E$ such that $r(F+x)>r(F)$ for every $x\in E-F$.
An intersection of flats is also a flat.
The \emph{closure} of $X$, written $\cl(X)$, is the intersection of all flats that contain $X$.
Assume $X$ and $Y$ are disjoint sets.
Then $r(X)+r(Y)\geq r(X\cup Y)$ by submodularity of the rank function~\cite{Oxl11}*{Lemma~1.3.1}.
If $r(X)+r(Y)=r(X\cup Y)$ then the pair of sets is \emph{skew}.
This is the case if and only if there is no circuit contained in $X\cup Y$ that contains elements of both $X$ and $Y$~\cite{Oxl11}*{Proposition~4.2.1}.
A \emph{separation} of the matroid is a partition of $E$ into a skew pair of non-empty sets.

Let $E$ be a finite set of vectors from a vector space $V$.
Declare a subset of $E$ to be a hyperedge if and only if it is linearly independent.
The resulting hypergraph is a \emph{representable matroid}.

\subsection{Monadic second-order logic}
In this section we construct \emph{counting monadic second-order logic} for hypergraphs, which we denote by $\cmso^{\textup{hyp}}$.
In the context of this article, monadic second-order logic always applies to hypergraphs, so we omit the superscript and write $\cmso$.
Formulas will be constructed using variables from the set $\{Z_{1},Z_{2},\ldots\}$.
The \emph{atomic formulas} are as follows.
\begin{enumerate}[label = $\bullet$, labelindent = 0em, labelwidth = 1em, labelsep* = 0.5em, leftmargin =!]
\item $Z_{i}\subseteq Z_{j}$ is an atomic formula for $i,j \in\bN$.
We declare
\[\var(Z_{i}\subseteq Z_{j})=\free(Z_{i}\subseteq Z_{j})=\{Z_{i},Z_{j}\}.\]
\item $\formula{hyp}(Z_{i})$ is an atomic formula for $i\in \bN$.
In this case
\[\var(\formula{hyp}(Z_{i}))=\free(\formula{hyp}(Z_{i}))=\{Z_{i}\}.\]
\item $|Z_{i}|\equiv p\bmod{q}$ is an atomic formula for $i\in\bN$, where $p$ and $q$ are integers satisfying $q>1$ and $0\leq p < q$.
We define
\[\var(|Z_{i}|\equiv p\bmod{q})=\free(|Z_{i}|\equiv p\bmod{q})=\{Z_{s}\}.\]
\end{enumerate}
Any \emph{formula} in \cmso\ is built using the following rules.
\begin{enumerate}[label = $\bullet$]
\item Every atomic formula is a formula.
\item If $\psi$ is a formula then $\neg\psi$ is a formula and $\var(\neg\psi)=\var(\psi)$ while $\free(\neg\psi)=\free(\psi)$.
\item If $\psi$ is a formula and $Z_{s}$ is in $\free(\psi)$ then $\exists Z_{s}\psi$ is a formula and $\var(\exists Z_{s}\psi)=\var(\psi)$ while $\free(\exists Z_{s}\psi)=\free(\psi)-\{Z_{s}\}$.
\item Assume that $\psi_{1}$ and $\psi_{2}$ are formulas such that
\[
(\var(\psi_{i})-\free(\psi_{i}))\cap \free(\psi_{3-i})=\emptyset
\]
for $i=1,2$.
Then $\psi_{1}\land\psi_{2}$ is a formula.
We declare
\[
\var(\psi_{1}\land\psi_{2})=\var(\psi_{1})\cup \var(\psi_{2})\ \quad\text{and}\]
\[\free(\psi_{1}\land\psi_{2})=\free(\psi_{1})\cup \free(\psi_{2}).\]
\end{enumerate}
If $\varphi$ is a \cmso\dash formula, then any variable in $\free(\varphi)$ is a \emph{free variable} of $\varphi$.
Any variable in $\var(\varphi)-\free(\varphi)$ is a \emph{bound variable} of $\varphi$.
We use $\bound(\varphi)$ to denote the set of bound variables in $\varphi$.
If $\var(\varphi)=\bound(\varphi)$ then $\varphi$ is a \emph{\cmso\dash sentence}.

The collection of \cmso\dash formulas that we construct without using any atomic formula of the form $|\cdot|\equiv p\bmod{q}$ is \emph{monadic second-order logic} for hypergraphs, which we denote by \mso.
Let $\delta$ be a positive integer.
We say that a \cmso\dash formula is \emph{$\delta$\dash confined} if it can be constructed without using a predicate of the form $|\cdot|\equiv p\bmod{q}$ where $q>\delta$.
Note that a formula is $1$\dash confined if and only if it is an \mso\dash formula.

We have now discussed the syntax of monadic second-order logic for hypergraphs.
Let us move to the semantics.
Let $\varphi$ be a \cmso\dash formula and let $M=(E,\mathcal{I})$ be a hypergraph.
An \emph{interpretation} of $\varphi$ in $M$ is a function $\theta\colon \free(\varphi)\to 2^{E}$.
We define what it means for $\varphi$ to be \emph{satisfied} by $(M,\theta)$.
If $\varphi$ is $Z_{i}\subseteq Z_{j}$ then $\varphi$ is satisfied if $\theta(Z_{i})\subseteq \theta(Z_{j})$.
If $\varphi$ is $\formula{hyp}(Z_{i})$ then $\varphi$ is satisfied if $\theta(Z_{i})$ is in $\mathcal{I}$.
Next, if $\varphi$ is $|Z_{i}|\equiv p\bmod{q}$ then $\varphi$ is satisfied if $|\theta(Z_{j})|$ is congruent to $p$ modulo $q$.

Now we assume that $\varphi$ is not atomic.
We define satisfaction recursively.
If $\varphi$ is $\neg\psi$ then $\varphi$ is satisfied if $\psi$ is \emph{not} satisfied by $(M,\theta)$.
If $\varphi$ is $\exists Z_{s}\psi$, then $\varphi$ is satisfied if there exists a subset $X\subseteq E$ such that $\psi$ is satisfied by $(M,\theta+(Z_{s},X))$.
Finally, if $\varphi$ is $\psi_{1}\land\psi_{2}$, then we let $\theta_{i}$ be the restriction of $\theta$ to $\free(\psi_{i})$ for $i=1,2$.
Now $\varphi$ is satisfied if $\psi_{i}$ is satisfied by $(M,\theta_{i})$ for all $i\in\{1,2\}$.
If $\varphi$ is a \cmso\dash sentence, then $\varphi$ has no free variables.
In this case, an interpretation is the empty function, and we may speak of $\varphi$ being satisfied by $M$, rather than $(M,\theta)$.

Let $\mathcal{M}$ be a class of hypergraphs.
Assume there is a \cmso\dash sentence $\varphi$ such that $\varphi$ is satisfied by a hypergraph if and only if that hypergraph belongs to $\mathcal{M}$.
In this case we say that $\mathcal{M}$ is \emph{\cmso\dash definable}.
If $\varphi$ is a \mso\dash sentence then $\mathcal{M}$ is \emph{\mso\dash definable}.

\subsection{Gain-graphic matroids}

Let $G$ be a graph with edge-set $E$ and vertex-set $V$.
A \emph{bicycle} of $G$ is a subset $X\subseteq E$ such that $X$ is minimal with respect to $G[X]$ being connected and containing at least two cycles.
Every bicycle is a \emph{handcuff} or a \emph{theta subgraph}.
The first of these consists of two cycles with at most one vertex in common, along with a unique minimal path joining the two cycles.
(Note that this path may consist of a single vertex that is in both cycles --- in this case the handcuff is \emph{tight} and otherwise it is a \emph{loose}.)
A theta subgraph consists of two distinct vertices and three pairwise internally-disjoint paths that join them.
A \emph{linear class} is a set $\mathcal{B}$ of cycles such that no theta subgraph in $G$ contains exactly two cycles in $\mathcal{B}$.
In this case, we say that $(G,\mathcal{B})$ is a \emph{biased graph}.
Any cycle that belongs to $\mathcal{B}$ is said to be \emph{balanced}.

When $\Omega=(G,\mathcal{B})$ is a biased graph we can define $F(\Omega)$, the \emph{frame matroid} of $\Omega$.
The ground set of $F(\Omega)$ is the edge-set of $G$.
The circuits are the edge-sets of balanced cycles and the edge-sets of bicycles that contain no balanced cycle.
Note that a loop edge in the graph $G$ will only be a loop in the matroid $F(\Omega)$ if that edge comprises a balanced cycle.

The rank of $F(\Omega)$ is obtained by subtracting the number of connected components in $\Omega$ that contain no unbalanced cycles from the number of vertices in $\Omega$.
A \emph{line} of a matroid is a rank\dash $2$ flat.
The intersection of two distinct lines has rank at most one.
A line is \emph{long} if it contains at least four rank\dash $1$ flats.

\begin{proposition}
\label{prop:long-lines}
Let $\Omega=(G,\mathcal{B})$ be a biased graph.
Assume that the element $e$ is contained in two distinct long lines of $F(\Omega)$.
Then $e$ is a loop edge of $G$.
\end{proposition}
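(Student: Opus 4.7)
My plan is to proceed by contrapositive: assume $e$ is not a loop edge of $G$, so $e=uv$ with $u\neq v$, and show that $F(\Omega)$ has at most one long line containing $e$. Let $L$ be any long line through $e$. Since $L$ has at least four rank\dash$1$ flats, there is some $f\in L$ not parallel to $e$, and then $\{e,f\}$ is a basis of $L$. I will run a case analysis on the type of $f$: a non-loop edge vertex-disjoint from $e$; a non-loop edge sharing exactly one endpoint with $e$; a second non-loop edge with endpoints $u,v$ (whose digon with $e$ is necessarily unbalanced, else $f$ would be parallel to $e$); or an unbalanced loop at various vertices (a balanced loop would be a matroid loop and so cannot form a basis with $e$).

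For each case I will enumerate all $g$ that make $\{e,f,g\}$ a $3$-circuit, using the description of circuits in $F(\Omega)$ as balanced cycles together with bicycles containing no balanced cycle. Bicycles on three edges split into loose handcuffs (loop, edge, loop), tight handcuffs (loop plus digon at a common vertex), and theta subgraphs (triple edges). A routine enumeration rules out several cases: if $f$ is non-loop and vertex-disjoint from $e$, no such $g$ exists and $L$ has only two rank\dash$1$ flats; if $f=uw$ shares one endpoint with $e$, only balanced triangles $\{e,f,g\}$ with $g$ a $vw$-edge arise; and if $f$ is an unbalanced loop not incident with $u$ or $v$, no $3$-circuit containing $\{e,f\}$ exists. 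In each of these, $L$ has at most three rank\dash$1$ flats, contradicting that $L$ is long. The surviving configurations are: $f$ an unbalanced loop at $u$ or at $v$, or $f$ a second $uv$-edge with $\{e,f\}$ an unbalanced digon.

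In any surviving case I will then show that $L$ equals the fixed set
\[
L^{*}=\{\text{matroid loops of }F(\Omega)\}\cup\{\text{edges of }G\text{ with endpoints }u,v\}\cup\{\text{unbalanced loops incident with }u\text{ or }v\}.
\]
When $f$ is an unbalanced loop at $u$, loose handcuffs contribute the unbalanced loops at $v$, tight handcuffs at $u$ contribute every $uv$-edge $g$ for which $\{e,g\}$ is an unbalanced digon, and the parallel class of $e$ supplies the remaining $uv$-edges; matroid loops lie in every flat. A symmetric argument handles $f$ an unbalanced loop at $v$, and in the parallel-edge basis case the theta on three $uv$-edges shows that every $uv$-edge lies in $L$. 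Thus $L^{*}$ does not depend on the chosen $f$, so at most one long line contains $e$, completing the contrapositive.

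I expect the main obstacle to be the careful invocation of the linear-class condition in the case $f=uw$ shares exactly one vertex with $e$. Two candidate third edges $g_{1},g_{2}$ with endpoints $v,w$, each producing a balanced triangle with $\{e,f\}$, form together with $e,f$ a theta subgraph on $v,w$. Its three cycles include the two balanced triangles $\{e,f,g_{1}\}$ and $\{e,f,g_{2}\}$; the linear-class axiom forbids exactly two balanced cycles in any theta, so the digon $\{g_{1},g_{2}\}$ must be balanced, placing $g_{1}$ and $g_{2}$ in a single parallel class. This keeps the count of rank\dash$1$ flats at exactly three in this case, confirming $L$ cannot be long when $f$ is a non-loop edge adjacent to $e$.
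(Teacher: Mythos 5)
Your proof is correct, but it takes a genuinely different route from the paper's. The paper argues directly: it picks four elements $\{a,b,c,e\}$ and $\{x,y,z,e\}$ from the two long lines so that every $3$\dash element subset is a circuit, uses the linear-class (theta) condition to show none of these triples is a balanced cycle, concludes that each triple is a bicycle spanning exactly two vertices, and then observes that since the two lines are distinct the two vertex pairs differ, forcing $e$ to meet at most one vertex. You instead argue the contrapositive and prove something slightly stronger: for a non-loop edge $e=uv$ you enumerate all possible bases $\{e,f\}$ of a line through $e$, show that $f$ must be a parallel $uv$\dash edge or an unbalanced loop at $u$ or $v$ for the line to be long, and in those cases identify the line explicitly as the fixed set $L^{*}$ (matroid loops, $uv$\dash edges, and unbalanced loops at $u$ or $v$), so that at most one long line contains $e$. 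Both arguments rest on the same two ingredients --- the classification of $3$\dash element circuits as balanced triangles or two-vertex bicycles, and the theta/linear-class argument to control balanced triangles --- but your version buys an explicit description of the unique long line through a non-loop edge (essentially the line of the vertex pair $\{u,v\}$) at the cost of a longer case enumeration, while the paper's is shorter because it never needs the full inventory of $L$, only that the relevant circuits live on two vertices.
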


\begin{proof}
We can let $\{a,b,c,e\}$ and $\{x,y,z,e\}$ be sets such that any $3$\dash element subset of either is a circuit.
We can also assume that no rank\dash $2$ flat contains both sets.
Assume that $\{a,b,e\}$ is the edge-set of a balanced cycle in $\Omega$.
Then $a$ and $b$ form a path of two edges, and $e$ must form a circuit with these edges.
This is only possible if $\{a,b,c\}$ is also the set of edges in a balanced cycle, which means $c$ and $e$ are parallel edges.
Therefore $\{a,b,c,e\}$ is the edge-set of a theta subgraph, but the cycle comprising $c$ and $e$ is not balanced, since $\{a,c,e\}$ is a circuit.
Therefore this theta subgraph contains exactly two balanced cycles, and we have a contradiction.
Exactly the same argument shows that no three edges from $\{a,b,c,e\}$ or $\{x,y,z,e\}$ form a balanced cycle.
This means that $\{a,b,c\}$ is a bicycle that contains no balanced cycle.
Subsequently $G[\{a,b,c\}]$ contains exactly two vertices.
Let these vertices be $p$ and $q$.
Then $e$ is not incident with any vertex not in $\{p,q\}$.
The same argument shows that $G[\{x,y,z\}]$ has exactly two vertices, $s$ and $t$, and $e$ is not incident with any vertex not in $\{s,t\}$.
If $\{s,t\}=\{p,q\}$ then there is a rank\dash $2$ flat that contains $\{a,b,c,x,y,z,e\}$, contrary to assumption.
Therefore $\{s,t\}$ and $\{p,q\}$ have at most one vertex in common, which means that $e$ is incident with at most one vertex.
Thus $e$ is a loop, as we claimed.
\end{proof}

Let $G$ be a graph and let $\Gamma$ be a (multiplicative) group.
A \emph{$\Gamma$\dash gaining} of $G$ is a function $\sigma$ that takes as input any triple $(e,u,v)$ such that either $e$ is an edge joining the distinct vertices $u$ and $v$, or $e$ is a loop incident with $u$ and $u=v$.
The codomain of $\sigma$ is $\Gamma$.
We require that if $u\ne v$ then $\sigma(e,u,v)=\sigma(e,v,u)^{-1}$.
Now we say that $(G,\sigma)$ is a \emph{$\Gamma$\dash gain-graph}.
If $\Omega=(G,\mathcal{B})$ is a biased graph, and there exists a $\Gamma$\dash gaining $\sigma$ such that $\mathcal{B} = \mathcal{B}(\sigma)$, then $\Omega$ is \emph{$\Gamma$\dash gainable}.
If $e$ is a loop edge of $G$ incident with the vertex $u$ and $\sigma(e,u,u)$ is not the identity, then we say $e$ is an \emph{unbalanced loop}.

Let $W$ be a walk of $G$ and let $v_{0},e_{0},v_{1},e_{1},\ldots, e_{n-1},v_{n}$ be the sequence of vertices and edges in $W$.
Then $\sigma(W)$ is
\[
\sigma(e_{0},v_{0},v_{1})\sigma(e_{1},v_{1},v_{2})\cdots \sigma(e_{n-1},v_{n-1},v_{n}).
\]
Let $\mathcal{B}(\sigma)$ be the collection of cycles $C$ in $G$ such that $\sigma(C)$ is the identity of $\Gamma$.
(If this condition holds, then it will hold no matter which starting point and orientation of the cycle is chosen.)
Then $\mathcal{B}(\sigma)$ is a linear class of cycles~\cite{Zas89}*{Proposition~5.1}.
We write $F(G,\sigma)$ to denote the frame matroid $F(G,\mathcal{B}(\sigma))$.
Any such matroid is said to be \emph{$\Gamma$\dash gain-graphic} matroid.

If $\sigma$ is a $\Gamma$\dash gaining and $\rho$ is a function from $V(G)$ to $\Gamma$, then $\sigma^{\rho}$ is the $\Gamma$\dash gaining that takes $(e,u,v)$ to
\[\rho(u)^{-1}\sigma(e,u,v)\rho(v)\]
when $u\ne v$.
(And which takes any tuple $(e,u,u)$ to $\sigma(e,u,u)$.)
It is easy to see that $\mathcal{B}(\sigma^{\rho}) = \mathcal{B}(\sigma)$, and therefore $F(G,\sigma^{\rho}) = F(G,\sigma)$.
We say that $\sigma^{\rho}$ is obtained from $\sigma$ by \emph{switching}.
Let $T$ be a maximal forest of $G$.
By performing an appropriately chosen switching, we can obtain a $\Gamma$\dash gaining that takes $(e,u,v)$ to the identity of $\Gamma$ whenever $e$ is an edge in $T$~\cite{Zas89}*{Lemma~5.3}.

\subsection{Ultrapowers}
\label{subsec:ultrapowers}
Here we will give a brief description of the theory of ultraproducts. All this material is standard; see for instance \cite{H97}*{Section 8.5}.

\begin{definition}
    An \emph{ultrafilter} on $\bN$ is a set $\cU \subseteq \mathcal{P}(\bN)$ such that
    \begin{enumerate}[label = \textup{(\roman*)}]
        \item For all $S \in \cU$, for all $S \subseteq T \subseteq \bN$, $T \in \cU$.
        \item For all $S_1,S_2 \in \cU$, $S_1 \cap S_2 \in \cU$.
        \item For all $S \subseteq \bN$, exactly one of $S$ and $\bN \setminus S$ is in $\cU$. In particular, $\emptyset \notin \cU$.
    \end{enumerate}
\end{definition}

\begin{remark}
    For any $n \in \bN$, $\{S : n \in S \subseteq \bN\}$ is an ultrafilter on $\bN$. We say an ultrafilter $\cU$ on $\bN$ is \emph{non-principal} if it is not of the above form. By the axiom of choice, such an ultrafilter must exist.
\end{remark}
\begin{remark}\label{rmk:non-principal-is-cofinite}
    An ultrafilter $\cU$ on $\bN$ is non-principal iff it contains no finite set, or equivalently iff it contains every cofinite set. This follows from properties (ii) and (iii) of the definition.
\end{remark}

\begin{definition}
    Let $(\Gamma_i)_{i \in \bN}$ be a sequence of groups, and let $\cU$ be a non-principal ultrafilter on $\bN$. Then there is a relation $\sim_{\cU}$ on $\Pi_i \Gamma_i$ defined by 
    \[(a_i) \sim_{\cU} (b_i) \Leftrightarrow \{i : a_i = b_i\} \in \cU\]
    It follows from the definition of an ultrafilter that $\sim_{\cU}$ is a $\Pi_i \Gamma_i$-invariant equivalence relation. Now the ultraproduct is defined as $\Pi_i \Gamma_i / \cU = \Pi_i \Gamma_i/\hspace{-0.5em}\sim_{\cU}$.
    It is easy to see that $\Pi_i \Gamma_i / \cU$ is still a group. We will only be interested in the case where all the $G_i$ are equal. In this case, the ultraproduct is referred to as an \emph{ultrapower}, and the ultrapower of $\Gamma$ is written as $\Gamma^{\cU}$.
\end{definition}

\begin{definition}
Let 
\[\{s_i(x_1, x_1', \ldots, x_l,x_l') : i \in S\}\quad\text{and}\quad \{t_i(x_1, x_1', \ldots, x_l,x_l') : i \in T\}\]
be two sets of strings in $\{x_1, x_1', \ldots, x_l,x_l'\}^*$. Given a group $\Gamma$, and elements $g_1, \ldots, g_l \in \Gamma$, there is a natural evaluation map taking an $s_i$ or $t_i$ and returning an element of $\Gamma$. We will write this $s_i(g_1,g_1^{-1}, \ldots, g_l,g_l^{-1})$. Given some $l$, a group $\Gamma$ and two such sets of strings $\{s_i\}$, $\{t_i\}$, let us say $\Gamma$ \emph{solves the pair} $(\{s_i\}, \{t_i\})$ if there are elements $g_1, \ldots, g_l$ such that for all $i \in S$,
\[s_i(g_1,g_1^{-1}, \ldots, g_l,g_l^{-1}) = \id\]
and for all $i \in T$,
\[t_i(g_1,g_1^{-1}, \ldots, g_l,g_l^{-1}) \neq \id\]
\end{definition}
The next proposition is a special case of \L{}o\'s's Theorem.
We include a proof to make this section more self-contained.
\begin{proposition}\label{prop:Los-Thm}
    Fix some $l$, and let $\{s_i : i \in S\}$, $\{t_i : i \in T\}$ be two finite lists of strings from $\{x_1, x_1', \ldots, x_l,x_l'\}^*$. 
Let $\Gamma$ be a group and $\cU$ be a non-principal ultrafilter on $\bN$. Then the following are equivalent:
    \begin{enumerate}[label = \textup{(\roman*)}]
        \item $\Gamma$ solves the pair $(\{s_i : i \in S\},\{t_i : i \in T\})$.
        \item $\Gamma^\cU$ solves the pair $(\{s_i : i \in S\},\{t_i : i \in T\})$.
    \end{enumerate}
\end{proposition}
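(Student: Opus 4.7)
The plan is to prove both directions directly from the definition of the ultrapower, using essentially only closure of $\cU$ under finite intersections plus the exclusion of $\emptyset$.

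For (i) $\Rightarrow$ (ii), suppose $g_1, \ldots, g_l \in \Gamma$ witness that $\Gamma$ solves $(\{s_i\}, \{t_i\})$. I would lift each $g_j$ to the constant sequence $\overline{g_j} = (g_j, g_j, \ldots) \in \Pi_k \Gamma$, and let $[\overline{g_j}]$ denote its $\sim_\cU$-class in $\Gamma^\cU$. Because $\sim_\cU$ is a congruence (so multiplication and inversion in $\Gamma^\cU$ are computed componentwise), evaluation of any string $w(x_1, x_1', \ldots, x_l, x_l')$ at $([\overline{g_1}], \ldots, [\overline{g_l}])$ produces the class of the constant sequence $\overline{w(g_1, g_1^{-1}, \ldots, g_l, g_l^{-1})}$. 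For $i \in S$ this constant sequence is $\overline{\id}$, whose class is the identity of $\Gamma^\cU$; for $i \in T$ it is a constant sequence with every entry $\ne \id$, so the set of indices where the entry equals $\id$ is empty, hence not in $\cU$, and the class is not the identity.

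For (ii) $\Rightarrow$ (i), suppose $\Gamma^\cU$ solves the pair via classes represented by sequences $(g_1^{(k)})_k, \ldots, (g_l^{(k)})_k$. Again by componentwise evaluation, for each $i \in S$ the class $s_i([(g_1^{(k)})_k], \ldots)$ equals $\id$ in $\Gamma^\cU$ precisely when
\[
A_i \;=\; \bigl\{k \in \bN : s_i(g_1^{(k)}, (g_1^{(k)})^{-1}, \ldots, g_l^{(k)}, (g_l^{(k)})^{-1}) = \id\bigr\} \;\in\; \cU,
\]
and analogously each
\[
B_i \;=\; \bigl\{k \in \bN : t_i(g_1^{(k)}, (g_1^{(k)})^{-1}, \ldots, g_l^{(k)}, (g_l^{(k)})^{-1}) \ne \id\bigr\}
\]
with $i \in T$ lies in $\cU$. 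Since $S$ and $T$ are finite, closure of $\cU$ under finite intersection gives $\bigcap_{i \in S} A_i \cap \bigcap_{i \in T} B_i \in \cU$, and since $\emptyset \notin \cU$ this intersection is nonempty. Picking any $k$ in it, the elements $g_1^{(k)}, \ldots, g_l^{(k)} \in \Gamma$ witness that $\Gamma$ solves the pair.

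There is no real obstacle here; the only subtle point is verifying that substitution in a word commutes with taking classes in $\Gamma^\cU$, which is immediate from $\sim_\cU$ being a group congruence. The essential hypothesis of the proposition is the finiteness of $S$ and $T$: without it one cannot intersect all $A_i$ and $B_i$ and remain inside $\cU$, and indeed the statement would fail in general (e.g.\ for a group $\Gamma$ of finite exponent, $\Gamma^\cU$ has the same exponent, but infinitely many word equations could fail only to be solvable simultaneously in the ultrapower and not in $\Gamma$).
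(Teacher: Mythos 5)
Your proposal is correct and follows essentially the same route as the paper: the forward direction via the diagonal (constant-sequence) embedding of $\Gamma$ into $\Gamma^{\cU}$, and the backward direction by intersecting the finitely many index sets witnessing each equation and inequation and picking a common index. The only cosmetic difference is that you verify componentwise evaluation of words explicitly where the paper simply invokes that the diagonal map is a group embedding.
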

\begin{proof}
    We first do the forward direction. Let $\alpha$ be the map $\Gamma \rightarrow \Gamma^{\cU}$ given by
    \[
    g \mapsto (g,g,g, \ldots)/\!\sim_{\cU}
    \]
    Note that $\alpha$ is injective, as by definition $\emptyset \notin \cU$, so for any $g_1 \neq g_2$, we have $(g_1,g_1,g_1 \ldots) \not\sim_{\cU} (g_2,g_2,g_2 \ldots)$.
    Thus, $\alpha$ is a group embedding. So given $g_1, \ldots, g_l \in \Gamma$ satisfying condition (i), $\alpha(g_1), \ldots, \alpha(g_l)$ satisfies condition (ii).

    Now we do the backward direction. Let $(g_k^{(1)})_{k \in \bN}, \ldots, (g_k^{(l)})_{k \in \bN} \in \Gamma^{\cU}$ satisfy condition (ii). Thus, for each $1 \leq i \leq n$,
    \[
    S_i = \{k : s_i(g_k^{(1)}, (g_k^{(1)})^{-1}, \ldots, g_k^{(l)}, (g_k^{(l)})^{-1}) = \id\} \in \cU
    \]
    and for each $1 \leq j \leq m$,
    \[
    T_i = \{k : t_j(g_k^{(1)}, (g_k^{(1)})^{-1}, \ldots, g_k^{(l)}, (g_k^{(l)})^{-1}) \neq \id\} \in \cU
    \]
    By property (ii) in the definition of ultrafilters, $\bigcap S_i \cap \bigcap T_j \in \cU$, and is therefore nonempty. Let $k$ be an element in this set.
It then follows that $g_k^{(1)}\ldots g_k^{(l)}$ satisfy condition (i).
\end{proof}

\begin{corollary}
    For any group $\Gamma$ and non-principal ultrafilter $\cU$ on $\bN$, a biased graph $\Omega$ is $\Gamma$\dash gainable if and only if it is $\Gamma^{\cU}$-gainable.
\end{corollary}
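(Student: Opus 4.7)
The plan is to translate $\Gamma$-gainability of a biased graph into the language of Proposition~\ref{prop:Los-Thm}, so that the corollary becomes a direct application. The key observation is that after switching, a gaining is determined by finitely many generators (the gains of the non-tree edges), and the ``balanced/unbalanced'' condition for each cycle becomes an equation or inequation of a word in these generators.

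First, I would fix a maximal forest $T$ of $G$ and enumerate the edges of $G$ not in $T$ as $e_1,\ldots,e_l$ (noting that every loop is among these). Using the switching result cited after Lemma~5.3 of~\cite{Zas89}, a biased graph $\Omega=(G,\mathcal{B})$ is $\Gamma$\dash gainable if and only if there is a $\Gamma$\dash gaining $\sigma$ of $G$ that sends every edge of $T$ to the identity and satisfies $\mathcal{B}(\sigma)=\mathcal{B}$. Such a $\sigma$ is determined by the elements $g_i=\sigma(e_i,u_i,v_i)\in\Gamma$ for some fixed orientation of each $e_i$.

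Next, for each cycle $C$ of $G$, pick a starting vertex and orientation and traverse $C$; the edges of $T$ contribute the identity, and each traversal of $e_i$ contributes $g_i$ or $g_i^{-1}$ according to orientation. This produces a fixed word $w_C\in\{x_1,x_1',\ldots,x_l,x_l'\}^*$ such that $\sigma(C)=w_C(g_1,g_1^{-1},\ldots,g_l,g_l^{-1})$. Since whether $\sigma(C)$ equals the identity is independent of the choice of starting point and orientation, the predicate ``$C$ is balanced under $\sigma$'' is equivalent to $w_C(g_1,g_1^{-1},\ldots,g_l,g_l^{-1})=\id$. Therefore, letting $S$ index the cycles in $\mathcal{B}$ and $T$ index the cycles of $G$ not in $\mathcal{B}$, the graph $\Omega$ is $\Gamma$\dash gainable if and only if $\Gamma$ solves the pair $(\{w_C:C\in S\},\{w_C:C\in T\})$. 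Since $G$ is finite, both lists of words are finite.

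Finally, applying Proposition~\ref{prop:Los-Thm} to this pair of finite lists shows that $\Gamma$ solves it if and only if $\Gamma^{\cU}$ solves it. The same translation applied to $\Gamma^{\cU}$ gives that this holds if and only if $\Omega$ is $\Gamma^{\cU}$\dash gainable, finishing the proof. The only step requiring care is the verification that the word $w_C$ is well-defined up to the only thing that matters (being trivial or not in $\Gamma$), but this is immediate from the standard fact that the gain of a cycle depends on the choice of basepoint and orientation only by conjugation and inversion.
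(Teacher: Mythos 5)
Your proposal is correct and follows essentially the same route as the paper: the paper's proof simply asserts that $\Gamma$\dash gainability is equivalent to the solvability in $\Gamma$ of a finite list of equations and inequations and then invokes Proposition~\ref{prop:Los-Thm}, and your argument (switch to make a maximal forest identity-gained, encode each cycle's balance condition as a word in the gains of the non-tree edges) is exactly the fleshed-out version of that assertion.
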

\begin{proof}
    It is straightforward to see that $\Omega$ is $\Gamma$\dash gainable if and only if there exist elements of $\Gamma$ satisfying a certain list of equations and inequations. By the above proposition, such elements exist in $\Gamma$ if and only if they exist in $\Gamma^{\cU}$.
\end{proof}
\begin{corollary}\label{cor:gain-matroids-are-same}
    For any group $\Gamma$ and non-principal ultrafilter $\cU$ on $\bN$, the class of $\Gamma$\dash gain-graphic matroids is equal to the class of $\Gamma^{\cU}$\dash gain-graphic matroids.
\end{corollary}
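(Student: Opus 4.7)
The plan is to reduce this corollary directly to the one immediately preceding it. By definition, a matroid $M$ is $\Gamma$\dash gain-graphic precisely when $M = F(G,\sigma)$ for some graph $G$ and some $\Gamma$\dash gaining $\sigma$ of $G$. Unfolding notation, $F(G,\sigma)$ is the same as $F(G,\mathcal{B}(\sigma))$, and $\mathcal{B}(\sigma)$ is a linear class by the Zaslavsky result quoted earlier. Hence being $\Gamma$\dash gain-graphic is equivalent to being of the form $F(\Omega)$ for some $\Gamma$\dash gainable biased graph $\Omega$. The same reformulation applies with $\Gamma^{\cU}$ in place of $\Gamma$.

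With these two reformulations in hand, the conclusion follows at once from the preceding corollary, which asserts that the classes of $\Gamma$\dash gainable and $\Gamma^{\cU}$\dash gainable biased graphs coincide. Applying $F(\cdot)$ to both sides of this equality yields equality of the corresponding classes of frame matroids, which is exactly the statement to be proved.

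There is no real obstacle: the only thing to verify is the translation in both directions between a $\Gamma$\dash gaining $\sigma$ and a $\Gamma$\dash gainable biased graph $(G,\mathcal{B})$, and this is immediate from the definitions (a gaining produces a linear class $\mathcal{B}(\sigma)$, and gainability of $(G,\mathcal{B})$ is by definition the existence of a $\sigma$ with $\mathcal{B}(\sigma) = \mathcal{B}$). All substantive work has already been carried out in Proposition~\ref{prop:Los-Thm} and the previous corollary.
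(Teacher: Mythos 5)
Your proof is correct and is exactly the argument the paper intends: the corollary is stated without proof precisely because it follows by translating ``$\Gamma$\dash gain-graphic matroid'' into ``frame matroid of a $\Gamma$\dash gainable biased graph'' and applying the preceding corollary. Nothing to add.
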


\begin{corollary}\label{cor:ulf-iff-up-lf}
    For any group $\Gamma$ and non-principal ultrafilter $\cU$ on $\bN$, the following statements are equivalent.
    \begin{enumerate}[label = \textup{(\roman*)}]
    \item $\Gamma$ is uniformly locally finite.
    \item $\Gamma^{\cU}$ is uniformly locally finite
    \item $\Gamma^{\cU}$ is locally finite.
    \end{enumerate}
\end{corollary}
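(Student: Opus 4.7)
My plan is to prove the chain (i) $\Rightarrow$ (ii) $\Rightarrow$ (iii) $\Rightarrow$ (i), noting that (ii) $\Rightarrow$ (iii) is immediate. The two non-trivial implications both rest on one elementary observation: in any group, the subgroup $\langle g_1,\ldots,g_k\rangle$ has order greater than $N$ if and only if at least $N+1$ of the (finitely many) words of length at most $N$ in $\{x_1,x_1',\ldots,x_k,x_k'\}^*$ evaluate to pairwise distinct elements. One direction is pigeonhole; the other follows from a BFS argument on the Cayley graph, which shows that as long as the ball of radius $r$ around the identity is not yet the whole subgroup, it must gain a new element at step $r+1$, so the ball of radius $N$ has at least $N+1$ elements whenever the subgroup does. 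This reduces the condition $|\langle g\rangle|>N$ to the existence of specific words $w_1,\ldots,w_{N+1}$ satisfying $w_a w_b^{-1}\neq\id$ for $a<b$, i.e.\ a pair of lists in exactly the form required by Proposition~\ref{prop:Los-Thm}.

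For (i) $\Rightarrow$ (ii), if some $\bar{g}_1,\ldots,\bar{g}_k\in\Gamma^\cU$ generated a subgroup of order exceeding $g_\Gamma(k)$, the observation would supply specific words $w_1,\ldots,w_{g_\Gamma(k)+1}$ witnessing this in $\Gamma^\cU$. Proposition~\ref{prop:Los-Thm} applied to the pair $(\emptyset,\{w_a w_b^{-1}:a<b\})$ would then yield elements of $\Gamma$ generating a subgroup larger than $g_\Gamma(k)$ permits, a contradiction.

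The main work is in (iii) $\Rightarrow$ (i), for which I use a diagonal construction. Suppose $\Gamma$ is not ULF and fix $k$ witnessing this; for each $N\in\bN$ choose $g_1^{(N)},\ldots,g_k^{(N)}\in\Gamma$ with $|\langle g^{(N)}\rangle|>N$, and set $\bar{g}_j=(g_j^{(N)})_N/{\sim_\cU}\in\Gamma^\cU$. I claim $\langle\bar{g}_1,\ldots,\bar{g}_k\rangle$ is infinite, which refutes (iii). Suppose for contradiction that it has size $M$, and enumerate the words of length at most $M$ as $w_1,\ldots,w_L$. For every $N\geq M$ the observation furnishes an $(M+1)$-tuple $I_N=(i_1^{(N)},\ldots,i_{M+1}^{(N)})\in[L]^{M+1}$ whose entries index words distinguishing $M+1$ elements at $g^{(N)}$. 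Since $[L]^{M+1}$ is finite and $\{N:N\geq M\}$ is cofinite (hence in $\cU$ by Remark~\ref{rmk:non-principal-is-cofinite}), a routine induction using properties (ii) and (iii) of an ultrafilter forces some single fibre $A=\{N\geq M: I_N=(i_1,\ldots,i_{M+1})\}$ to lie in $\cU$. For each pair $a\neq b$, the set $\{N: w_{i_a}(g^{(N)})\neq w_{i_b}(g^{(N)})\}$ then contains $A$ and so lies in $\cU$, whence $w_{i_a}(\bar{g})\neq w_{i_b}(\bar{g})$ in $\Gamma^\cU$, exhibiting $M+1$ distinct elements of $\langle\bar{g}\rangle$.

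The main obstacle is this last step: Proposition~\ref{prop:Los-Thm} handles only conjunctions of equations and inequations, so the implicit disjunction over which $(M+1)$-tuple of words witnesses the size bound must be resolved by hand, using the ultrafilter to commit to a single good tuple on a $\cU$-large set of indices rather than allowing the choice to vary with $N$.
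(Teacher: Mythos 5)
Your proposal is correct, and two of the three implications match the paper's argument closely: the chain (i) $\Rightarrow$ (ii) $\Rightarrow$ (iii) $\Rightarrow$ (i) is the same, and your (i) $\Rightarrow$ (ii) is essentially the paper's proof --- the paper also certifies $|\langle h_1,\ldots,h_K\rangle|>g_\Gamma(K)$ by a finite list of inequations and applies Proposition~\ref{prop:Los-Thm}; your ball-growth observation just makes explicit which inequations these are, which the paper leaves implicit. Where you genuinely diverge is in (iii) $\Rightarrow$ (i). The paper supposes $\langle h_1,\ldots,h_K\rangle\leq\Gamma^\cU$ is finite of size $R$, encodes this finiteness by \emph{equations} (ones implying the whole Cayley table, so that any tuple satisfying them generates a quotient of that finite group), and transfers these equations \emph{down} to $\Gamma$: a $\cU$-large, hence infinite, set of indices $i$ satisfies them, and choosing $i>R$ contradicts $|\langle g_{1,i},\ldots,g_{K,i}\rangle|\geq i$. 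You instead transfer \emph{inequations up}: assuming the subgroup of $\Gamma^\cU$ has size $M$, you extract from each index $N\geq M$ an $(M+1)$-tuple of short words with pairwise distinct values, use the finite-partition property of the ultrafilter to fix a single witnessing tuple on a $\cU$-large set, and then push the resulting inequations into the ultrapower to exhibit $M+1$ distinct elements. Both are valid; the paper's Cayley-table trick sidesteps the disjunction over witness-tuples entirely, at the cost of the slightly delicate ``quotient'' observation, while your route stays purely with inequations but needs the pigeonhole step on $[L]^{M+1}$ --- a step you correctly identify as the point where Proposition~\ref{prop:Los-Thm} alone does not suffice.
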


\begin{proof}
    First, $\text{(i)} \Rightarrow \text{(ii)}$. Assume for a contradiction that $\Gamma^{\mathcal{U}}$ is not uniformly locally finite, but $\Gamma$ is.
There exists a $K$ such that $\Gamma^{\mathcal{U}}$ contains arbitrarily large subgroups generated by at most $K$ elements.
Let $g_{\Gamma}(K)$ be the maximum size of a subgroup of $\Gamma$ generated by at most $K$ elements.
Let $h_{1},\ldots, h_{K}$ be elements of $\Gamma^{\mathcal{U}}$ that generate a subgroup of more than $g_{\Gamma}(K)$ elements.
Then there is a finite list of inequalities certifying that $|\langle \{h_{1},\ldots, h_{K}\}\rangle | > g_{\Gamma}(K)$. We apply Proposition~\ref{prop:Los-Thm} to this list of inequalities to deduce that there are elements $g_1, \ldots, g_K \in \Gamma$ also satisfying those inequalities. But then $|\langle \{g_{1},\ldots, g_{K}\}\rangle | > g_{\Gamma}(K)$, yielding a contradiction.

    The fact that (ii) implies (iii) is obvious from definitions, as discussed in the introduction.
    To prove $\text{(iii)}\Rightarrow\text{(i)}$, we assume $\Gamma$ is not uniformly locally finite, and prove that $\Gamma^{\cU}$ is not locally finite.
    Our assumption means that there is some positive integer $K$ such that there is a sequence of tuples $(g_{i,j})_{1\leq i \leq K, j \in \bN}$ in $\Gamma$ generating subgroups of strictly increasing size. Consider
    \[\{(g_{i,1}, g_{i,2}, \ldots)/\!\sim_{\cU} : 1 \leq i \leq K\} \subseteq \Gamma^{\cU}\]
    Call these elements $h_1, \ldots, h_K$. Suppose they generate a finite subgroup of $\Gamma^{\cU}$, and let the size of this subgroup be $R$. This finiteness is witnessed by a finite sequence of equalities $\{s_j(h_1,h_1^{-1}, \ldots, h_K,h_K^{-1}) = \id : j \in J\}$ (e.g.\ let these equalities imply all the equalities in the Cayley table of this group, thus forcing that any set satisfying those equalities is a quotient of this finite group).
    For each $j \in J$, let $S_j = \{i \in \bN : s_j(g_{1,i},g_{1,i}^{-1}, \ldots, g_{K,i},g_{K,i}^{-1}) = \id\}$.
    Then by the definition of $\sim_{\cU}$, each $S_j$ is in $\cU$, and by property (ii) of ultrafilters, $S = \bigcap_j S_j$ is also in $\cU$, and is therefore nonempty.
    As in Remark~\ref{rmk:non-principal-is-cofinite}, since the ultrafilter is non-principal, $S$ must be infinite.
Fix an index $i$ in $S$ larger than $R$. Then $\{g_{1,i}, \ldots, g_{K,i}\}$ satisfy all the $s_j$, so they must generate a subgroup of size at most $R$.
    But by definition of the sequence $g_{i,j}$, $\{g_{1,i}, \ldots, g_{K,i}\}$ generates a subgroup of size at least $i$ for each $i$. Contradiction. The result follows.
\end{proof}

\section{A Myhill-Nerode analogue}
\label{sec:myhill-nerode}

In this section we develop an analogue of the Myhill-Nerode characterisation of regular languages~\cites{Myh57, Ner58} (see also~\cite{HU79}*{Section~3.4}).
The Myhill-Nerode characterisation relies on an equivalence relation on strings, which is defined via the operation of concatenation.
In~\cite{MNW18} we developed an idea that was inspired by Myhill-Nerode, but which used an equivalence relation on matroids defined via the (matroidal) operation of amalgamation.
In this section we generalise this technique to hypergraphs.
In order to establish an analogue of concatenation of strings, we develop the idea of coloured systems and coloured complements.
These can be glued together to form a hypergraph using an operation that we call a coloured sum.
This gives us the hypergraph analogue of concatenation that we use to construct a Myhill-Nerode-style equivalence.

\begin{definition}[$C$\dash coloured system]\label{def:tboundaried}
Let $C$ be a finite set.
A \emph{$C$\dash coloured system} is a pair $(U,c)$, where  $c$ is a function from $2^{U}$ to $C$.
\end{definition}

\begin{definition}[$C$\dash coloured complement]
\label{def:tcomplement}
Let $C$ be a finite set.
A \emph{$C$\dash coloured complement} is a finite set $V$ along with a function $d\colon 2^{V}\times C\to \{0,1\}$.
\end{definition}

\begin{definition}[$C$\dash coloured sum]
\label{def:tsum}
Let $C$ be a finite set and let $(U,c)$ be a  $C$\dash coloured system.
Let $(V,d)$ be a $C$\dash coloured complement, where $U$ and $V$ are disjoint.
The \emph{$C$\dash coloured sum} is
\[
(U\cup V, \{X\cup Y \colon X\subseteq U,\ Y\subseteq V,\ d(Y,c(X))=1\}).
\]
We denote this hypergraph by $(U,c)\boxplus (V,d)$.
\end{definition}

\begin{remark}
The models that we have in mind for coloured sums are versions of matroid sums.
Standard matroid operations, such as $1$\dash, $2$\dash, and $3$\dash sums can all be expressed as $C$\dash coloured sums, as can amalgams over finite sets.

To illustrate, we let $M$ and $N$ be matroids on the ground sets $U\cup \{p\}$ and $V\cup \{p\}$ respectively, where $U$, $V$, and $\{p\}$ are pairwise disjoint, and $p$ is not a loop or coloop in either matroid.
We define the function $c\colon 2^{U}\to [3]$ as follows.
If $X\subseteq U$ is dependent in $M$, then set $c(X)=1$.
If $X$ is independent and $p$ is contained in the closure of $X$, then set $c(X)=2$.
Otherwise, $X$ is independent and $p$ is not in the closure of $X$.
In this case, set $c(X)$ to be $3$.
Now $(U, c)$ is a $[3]$\dash coloured system.
Let $d\colon 2^{V}\times [3]\to \{0,1\}$ be the function such that $(Y,i)$ is taken to $1$ if $Y$ is an independent subset of $N$ and either $i=3$, or $i=2$ and $Y$ does not span $p$.
All other pairs are taken to $0$.
It is not difficult to check that the $[3]$\dash coloured sum $(U, c)\boxplus (V,d)$ is the hypergraph of independent sets of the $2$\dash sum of $M$ and $N$ along the basepoint $p$.
\end{remark}

\begin{definition}\label{def:invariance}
Let $\mathcal{M}$ be a family of hypergraphs and let $C$ be a finite set.
We define the relation $\sim_{\mathcal{M},C}$ on $C$\dash coloured systems.
Let $(U_{1},c_{1})$ and $(U_{2},c_{2})$ be two such systems.
Then
\[
(U_{1},c_{1})\sim_{\mathcal{M},C}(U_{2},c_{2})
\]
holds if, for every $C$\dash coloured complement $(V,d)$ such that $V\cap U_{1}=V\cap U_{2}=\emptyset$, we have 
\[
(U_{1},c_{1})\boxplus (V,d)\in \mathcal{M}
\leftrightarrow
(U_{2},c_{2})\boxplus (V,d)\in \mathcal{M}.
\]
It is clear that $\sim_{\mathcal{M},C}$ is an equivalence relation.
\end{definition}

\begin{definition}\label{def:Lambda-bound}
Let $s$, $t$, and $\delta$ be positive integers and let $\varphi$ be a $\delta$\dash confined \cmso\dash formula.
We define the integer $\Lambda_{\varphi}(s, t,\delta)$.
If $\varphi$ is $|Z_{i}|\equiv p\bmod{q}$, then we set $\Lambda_{\varphi}(s, t,\delta)$ to be $(\delta!)^{s}$.
If $\varphi$ is $\formula{hyp}(Z_{i})$, then $\Lambda_{\varphi}(s, t,\delta)$ is $t^{s}$, and if $\varphi$ is $Z_{i}\subseteq Z_{j}$, then $\Lambda_{\varphi}(s, t,\delta)$ is $2^{s^{2}}$.

We have defined $\Lambda_{\varphi}(s, t,\delta)$ when $\varphi$ is atomic.
Now assume $\varphi$ is $\neg\psi$.
In this case we set  $\Lambda_{\varphi}(s, t,\delta)$ to be  $\Lambda_{\psi}(s, t,\delta)$.
If $\varphi$ is $\psi_{1}\land \psi_{2}$, then we set  $\Lambda_{\varphi}(s, t,\delta)$ to be the product $\Lambda_{\psi_{1}}(s, t,\delta)\Lambda_{\psi_{2}}(s, t,\delta)$.
Finally, we assume that $\varphi$ is $\exists Z_{i}\psi$.
In this case we set $\Lambda_{\varphi}(s, t,\delta)$ to be
\[
2^{\Lambda_{\psi}(s, t,\delta)}.
\]
\end{definition}

\begin{lemma}\label{thm:sharp-Myhill-Nerode}
Let $t$, $\delta$, and $s$ be positive integers and let $\varphi$ be a $\delta$\dash confined \cmso\dash sentence with $s$ variables.
Let $\mathcal{M}$ be the class of hypergraphs that satisfy $\varphi$.
If $C$ is a set with cardinality $t$, then $\sim_{\mathcal{M},C}$ has at most $\Lambda_{\varphi}(s,t,\delta)$ equivalence classes.
\end{lemma}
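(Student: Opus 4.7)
The plan is to prove the bound by structural induction on $\varphi$, strengthening the hypothesis to apply uniformly to every subformula $\psi$ of $\varphi$ (possibly with free variables). To this end I would introduce \emph{augmented} $C$\dash coloured systems $((U,c),\theta)$, where $\theta\colon\{Z_{1},\dotsc,Z_{s}\}\to 2^{U}$ assigns a subset of $U$ to \emph{every} variable of $\varphi$, and analogously augmented complements $((V,d),\theta_{V})$; the pointwise union gives a full interpretation $Z_{i'}\mapsto\theta(Z_{i'})\cup\theta_{V}(Z_{i'})$ on the ground set $U\cup V$ of the coloured sum. For each subformula $\psi$, declare two augmented systems $\sim_{\psi}$-equivalent if, for every augmented complement disjoint from them, $\psi$ is satisfied under this combined interpretation in both cases or in neither. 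The induction would establish that $\sim_{\psi}$ has at most $\Lambda_{\psi}(s,t,\delta)$ classes. Applying this with $\psi=\varphi$, which has no free variables, the augmentation is irrelevant to satisfaction, so augmented equivalence classes biject with those of $\sim_{\mathcal{M},C}$, delivering the lemma.

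For the atomic bases I would exhibit explicit fingerprints matching the stated $\Lambda$ values. For $Z_{i}\subseteq Z_{j}$, the fingerprint is the $[s]\times[s]$ boolean matrix $(\theta(Z_{i'})\subseteq\theta(Z_{j'}))_{i',j'}$, giving at most $2^{s^{2}}$ values. For $\formula{hyp}(Z_{i})$, the fingerprint is $(c(\theta(Z_{i'})))_{i'\in[s]}\in C^{s}$, giving at most $t^{s}$. For $|Z_{i}|\equiv p\bmod q$ (with $q\leq\delta$ by confinement, so $q\mid\delta!$), the fingerprint is $(|\theta(Z_{i'})|\bmod\delta!)_{i'\in[s]}$, giving at most $(\delta!)^{s}$. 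In each case, satisfaction of the atomic formula in the augmented sum is a deterministic function of the system's and complement's fingerprints, using the pointwise definition of the coloured sum and the additivity of sizes across the disjoint $U,V$. The inductive cases $\psi=\neg\psi'$ and $\psi=\psi_{1}\land\psi_{2}$ are routine: negation preserves the equivalence, while conjunction is refined by the product of the two component equivalences, giving the bound $\Lambda_{\psi_{1}}\Lambda_{\psi_{2}}$.

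The substantive step is $\psi=\exists Z_{i}\psi'$. To each augmented system $((U,c),\theta)$ I would associate its \emph{realizable set}
\[
\mathcal{R}((U,c),\theta)=\bigl\{\,[((U,c),\theta[Z_{i}\!\mapsto\!X])]_{\sim_{\psi'}}\colon X\subseteq U\,\bigr\},
\]
where $\theta[Z_{i}\mapsto X]$ denotes $\theta$ with its $Z_{i}$-value overridden by $X$. As a subset of the at most $\Lambda_{\psi'}$ many $\sim_{\psi'}$-classes, $\mathcal{R}$ takes at most $2^{\Lambda_{\psi'}}$ values. The key verification is that $\mathcal{R}$ determines $\sim_{\exists Z_{i}\psi'}$-equivalence: if two augmented systems share the same $\mathcal{R}$ and some $X\subseteq U_{1}\cup V$ witnesses satisfaction of $\psi'$ on one side, split $X=X_{U_{1}}\cup X_{V}$, use the agreement of $\mathcal{R}$ to select $X_{U_{2}}\subseteq U_{2}$ placing $((U_{2},c_{2}),\theta_{2}[Z_{i}\!\mapsto\!X_{U_{2}}])$ in the same $\sim_{\psi'}$-class as $((U_{1},c_{1}),\theta_{1}[Z_{i}\!\mapsto\!X_{U_{1}}])$, and invoke the inductive hypothesis against the augmented complement $((V,d),\theta_{V}[Z_{i}\!\mapsto\!X_{V}])$ to transport the witness to the other side.

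This matching step is the main obstacle: the quantified subset $X$ straddles $U$ and $V$, and one must vary only the $U$-part when matching systems while holding both the original augmented complement and the $V$-part of $X$ fixed, so that the inductive $\sim_{\psi'}$-equivalence can carry satisfaction across. The discipline of tracking assignments to all $s$ variables throughout the induction (rather than only the free variables of the current subformula) is what lets the atomic fingerprints compose cleanly through nested $\exists$s. Once this setup is in place, the inductive step reduces to the verification just sketched, and the conclusion for $\sim_{\mathcal{M},C}$ follows by the bijection mentioned in the first paragraph.
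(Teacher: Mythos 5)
Your proposal is correct and follows essentially the same route as the paper: your per-subformula ``fingerprint'' is the paper's registry $R_{S,C,\delta}(M,\varphi,\sigma)$ (with identical atomic cases, the pair for $\land$, and the set of achievable values over $X\subseteq U$ for $\exists$), your observation that satisfaction in the sum is determined by the system's fingerprint together with the complement's data is the paper's ``sympathetic'' relation and Proposition~\ref{prop:compatibility}, and your witness-splitting argument for the existential step is exactly the chain of equivalences in that proposition's final case. The only cosmetic difference is that you define the equivalence semantically at each level and show the fingerprint refines it, while the paper defines the registry first and derives the refinement via Corollary~\ref{cor:reg-equivalance} and the count via Proposition~\ref{prop:num-registries}.
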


We illustrate Lemma~\ref{thm:sharp-Myhill-Nerode} and build intuition by using it to strengthen Theorem~1.1 in~\cite{MNW18}.

\begin{theorem}\label{thm:non-def-rep}
The class of representable matroids is not \cmso\dash definable.
\end{theorem}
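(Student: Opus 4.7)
The plan is to argue by contradiction using Lemma~\ref{thm:sharp-Myhill-Nerode}. Suppose the class $\mathcal{M}$ of representable matroids is \cmso\dash definable by a $\delta$\dash confined sentence $\varphi$ with $s$ variables. Then for any finite $C$ of cardinality $t$ the equivalence relation $\sim_{\mathcal{M},C}$ has at most $\Lambda_{\varphi}(s,t,\delta)$ classes. It therefore suffices to exhibit, for some fixed finite $C$, an infinite family of pairwise $\sim_{\mathcal{M},C}$\dash inequivalent $C$\dash coloured systems that all arise from hypergraphs of independent sets of matroids.

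The remark following Definition~\ref{def:tsum} describes how the matroid $2$\dash sum along a non-loop, non-coloop basepoint is realised as a $[3]$\dash coloured sum, so I take $C=[3]$. The family will be indexed by primes. For each prime $p$, let $M_{p}$ be the projective geometry $PG(2,p)$, pick a distinguished element $e_{p}\in E(M_{p})$, and let $(U_{p},c_{p})$ be the $[3]$\dash coloured system built from $M_{p}$ with basepoint $e_{p}$ exactly as in that remark. The matroid-theoretic input driving the argument is the classical fact that $M_{p}$ is representable over a field $\mathbb{F}$ if and only if $\mathbb{F}$ has characteristic $p$.

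To verify pairwise inequivalence, I would distinguish $(U_{p},c_{p})$ from $(U_{q},c_{q})$ for any primes $p\neq q$ as follows. Let $(V_{p},d_{p})$ be the $[3]$\dash coloured complement built from a disjoint copy of $M_{p}$ with basepoint $e_{p}$, again following the remark after Definition~\ref{def:tsum}. Then $(U_{p},c_{p})\boxplus(V_{p},d_{p})$ realises the $2$\dash sum $M_{p}\oplus_{2} M_{p}$, which is representable over $\mathbb{F}_{p}$, while $(U_{q},c_{q})\boxplus(V_{p},d_{p})$ realises $M_{q}\oplus_{2} M_{p}$, which is representable over no field at all, since the $2$\dash sum of two matroids is representable over $\mathbb{F}$ only if both summands are, and no single field has two distinct characteristics. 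Hence $(V_{p},d_{p})$ witnesses $(U_{p},c_{p})\not\sim_{\mathcal{M},[3]}(U_{q},c_{q})$, producing infinitely many equivalence classes and contradicting Lemma~\ref{thm:sharp-Myhill-Nerode}.

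The main obstacle is not in the logical machinery---Lemma~\ref{thm:sharp-Myhill-Nerode} applies uniformly to \cmso\ regardless of the modular counting predicates, so no new ideas are needed beyond those already used to handle \mso\ in~\cite{MNW18}---but in the matroid-theoretic bookkeeping: confirming that $PG(2,p)$ has characteristic set exactly $\{p\}$, that the $2$\dash sum respects representability field-by-field when the basepoint is neither a loop nor a coloop, and that the functions $c_{p}$ and $d_{p}$ really do encode the $2$\dash sum as the coloured sum from the remark after Definition~\ref{def:tsum}. These are each standard but need to be checked to make the family rigorously realise distinct classes of $\sim_{\mathcal{M},[3]}$.
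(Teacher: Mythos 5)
Your proposal is correct and is essentially the paper's own argument: both apply Lemma~\ref{thm:sharp-Myhill-Nerode} to the family $\{\mathrm{PG}(2,p)\}_{p \text{ prime}}$, use a copy of one projective plane as the coloured complement, and derive the contradiction from the fact that a matroid with $\mathrm{PG}(2,p)$ and $\mathrm{PG}(2,q)$ as minors ($p\neq q$) is representable over no field. The only cosmetic differences are that the paper glues via the direct sum with $C=[2]$ rather than the $2$\dash sum with $C=[3]$ (marginally less bookkeeping, since one need not check the basepoint conditions), and that it extracts two equivalent primes by pigeonhole rather than exhibiting all pairs as inequivalent.
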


We note that this does indeed strengthen~\cite{MNW18}*{Theorem~1.1}, since that result applies only to \mso\dash definability.

\begin{proof}[Proof of \textup{Theorem~\ref{thm:non-def-rep}}]
Assume for a contradiction that $\varphi$ is a \cmso\dash sentence that is satisfied exactly by the hypergraphs that are representable matroids.
Let $s$ be the number of variables in $\varphi$ and let $\delta$ be the smallest positive integer such that $\varphi$ is $\delta$\dash confined.
Let $\mathcal{M}$ be the class of representable matroids.

For any prime number $p$, let $(U_{p},\mathcal{I}_{p})$ be a matroid isomorphic to the finite projective plane $\mathrm{PG}(2,p)$.
Set $c_{p}$ to be the function which takes every dependent subset of $U_{p}$ to $1$ and every independent subset to $2$.
Then $M_{p}=(U_{p},c_{p})$ is a $[2]$\dash coloured system.
Because there are infinitely many prime numbers~\cite{Euclid}*{Proposition~20}, Lemma~\ref{thm:sharp-Myhill-Nerode} tells us that there are distinct primes, $p_{1}$ and $p_{2}$, such that $M_{p_{1}}\sim_{\mathcal{M},[2]} M_{p_{2}}$ holds.

Let $N$ be a matroid $(V,\mathcal{I})$ that is isomorphic to $\mathrm{PG}(2,p_{1})$.
Let $d\colon 2^{V}\times [2] \to \{0,1\}$ be the function taking $(Y,i)$ to $1$ when $Y$ is independent in $N$ and $i=2$.
Other pairs are taken to $0$.
Then $M_{p_{i}}\boxplus (V,d)$ is isomorphic to the matroidal direct sum $\mathrm{PG}(2,p_{i})\oplus N$.
Therefore $M_{p_{1}}\boxplus (V,d)$ is representable over $\mathrm{GF}(p_{1})$ \cite{Oxl11}*{Proposition~4.2.11}.
On the other hand, both $\mathrm{PG}(2,p_{1})$ and $\mathrm{PG}(2,p_{2})$ are isomorphic to minors of $M_{p_{2}}\boxplus (V,d)$~\cite{Oxl11}*{4.2.19}.
It follows from \cite{Oxl11}*{Proposition~3.2.4} and \cite{Aig97}*{Proposition~7.3} that if $M_{p_{2}}\boxplus (V,d)$ is representable over a field, then that field must simultaneously have subfields isomorphic to $\mathrm{GF}(p_{1})$ and $\mathrm{GF}(p_{2})$, an impossibility.
To summarise, $M_{p_{1}}\boxplus (V,d)$ is representable and $M_{p_{2}}\boxplus (V,d)$ is not.
Therefore $M_{p_{1}}\nsim_{\mathcal{M},[2]} M_{p_{2}}$, which is a contradiction.
\end{proof}

The proof of Lemma~\ref{thm:sharp-Myhill-Nerode} involves several technical definitions.
However, the basic idea is not too complicated.
A \emph{cleft} (Definition~\ref{def:cleft-definition}) is a certificate that two $C$\dash coloured set systems are not equivalent under $\sim_{\mathcal{M},C}$.
Definition~\ref{def:registries} introduces the idea of a piece of information carried by each $C$\dash coloured system.
The exact form of this piece of information will depend on the structure of $\varphi$, but the important point is that there are at most $\Lambda_{\varphi}(s,t,\delta)$ values that this information can take (Proposition~\ref{prop:num-registries}).
Furthermore, if two $C$\dash coloured systems carry the same piece of information, then there can be no cleft that divides them (Corollary~\ref{cor:reg-equivalance}).
Lemma~\ref{thm:sharp-Myhill-Nerode} follows from these steps.

\begin{definition}[Cleft]\label{def:cleft-definition}
Let $C$ be a finite set and for $i=1,2$, let $M_{i}=(U_{i},c_{i})$ be a $C$\dash coloured system.
Let $\varphi$ be a \cmso\dash formula and for $i=1,2$, let $\sigma_{i}$ be a function from $\free(\varphi)$ to $2^{U_{i}}$.
A \emph{$\varphi$\dash cleft} for $(M_{1},\sigma_{1})$ and $(M_{2},\sigma_{2})$ consists of a $C$\dash coloured complement $(V,d)$ and a function $\tau\colon \free(\varphi)\to 2^{V}$ such that $V\cap U_{1}=V\cap U_{2}=\emptyset$ and the following holds:
for $i=1,2$, if we define $N_{i}$ to be
\[
M_{i}\boxplus (V,d)
\]
and let $\theta_{i}$ be the function taking each $Z_{i}\in \free(\varphi)$ to $\sigma_{i}(Z_{i})\cup \tau(Z_{i})$, then $\varphi$ is satisfied by exactly one of $(N_{1},\theta_{1})$ and $(N_{2},\theta_{2})$.
\end{definition}

So when $\varphi$ is a sentence that defines the class $\mathcal{M}$, there is a $\varphi$\dash cleft exactly when $M_{1}$ and $M_{2}$ are not equivalent under $\sim_{\mathcal{M},C}$.

\begin{definition}\label{def:registries}
Let $S\subseteq \{Z_{1},Z_{2},\ldots\}$ be a finite set of variables, let $C$ be a finite set, and let $\delta$ be a positive integer.
We are going to define a function $R_{S,C,\delta}$ which takes as input any triple $(M,\varphi,\sigma)$, where:
\begin{enumerate}[label = $\bullet$]
\item $M = (U, c)$ is a $C$\dash coloured system,
\item $\varphi$ is a $\delta$\dash confined \cmso\dash formula such that $\var(\varphi)\subseteq S$, and
\item $\sigma$ is a function from $S-\bound(\varphi)$ to $2^{U}$.
\end{enumerate}
We define the output of $R_{S,C,\delta}$ recursively.
Let $T$ stand for $S-\bound(\varphi)$, so that $\sigma$ is a function from $T$ to $2^{U}$.
If $\varphi$ is an atomic formula, then $\bound(\varphi)=\emptyset$ and $T=S$.
In this case, $R_{S,C,\delta}(M,\varphi,\sigma)$ is defined as follows.
\begin{enumerate}[label = \textup{(\roman*)}]
\item If $\varphi$ is $|Z_{i}|\equiv p\bmod{q}$ for some $Z_{i}\in S$, then $R_{S,C,\delta}(M,\varphi,\sigma)$ is the function $r_{1}\colon S\to \{0,1,\ldots,\delta!-1\}$, where $r_{1}(Z_{i})$ is the residue of $|\sigma(Z_{i})|$ modulo $\delta!$ for each $Z_{i}\in S$.
\item If $\varphi$ is $\formula{hyp}(Z_{i})$ for some $Z_{i}\in S$, then $R_{S,C,\delta}(M,\varphi,\sigma)$ is the function $r_{2}\colon S\to C$, where $r_{2}(Z_{i})$ is  $c(\sigma(Z_{i}))$ for each $Z_{i}\in S$.
\item If $\varphi$ is $Z_{i}\subseteq Z_{j}$ for some $Z_{i},Z_{j}\subseteq S$, then $R_{S,C,\delta}(M,\varphi,\sigma)$ is the function $r_{3}\colon S\times S\to\{0,1\}$, where $r_{3}(Z_{i},Z_{j})=1$ if and only if $\sigma(Z_{i})\subseteq \sigma(Z_{j})$.
\end{enumerate}
We have now defined $R_{S,C,\delta}(M,\varphi,\sigma)$ in the case that $\varphi$ is atomic.
Assume now that $\varphi$ is not atomic.
If $\varphi$ is $\neg\psi$ then $\bound(\varphi)=\bound(\psi)$ and we declare
$R_{S,C,\delta}(M,\varphi,\sigma)$ to be equal to $R_{S,C,\delta}(M,\psi,\sigma)$.
Next assume that $\varphi=\psi_{1}\land\psi_{2}$.
Note that $\bound(\varphi)=\bound(\psi_{1})\cup\bound(\psi_{2})$ and
\[\bound(\psi_{i})\cap\free(\psi_{3-i})=\emptyset\]
for $i=1,2$.
Let $\sigma_{i}$ be the function
\[
\sigma\cup\{(Z_{s},\emptyset)\colon Z_{s}\in \bound(\psi_{3-i})-\bound(\psi_{i})\}.
\]
Thus $\sigma_{i}$ is a function from $S-\bound(\psi_{i})$ to subsets of $U$.
We set $R_{S,C,\delta}(M,\varphi,\sigma)$ to be the ordered pair
\[
(R_{S,C,\delta}(M,\psi_{1},\sigma_{1}),R_{S,C,\delta}(M,\psi_{2},\sigma_{2})).
\]
Finally we assume that $\varphi=\exists Z_{i}\psi$.
In this case
\[\bound(\psi) = \bound(\varphi)+Z_{i}.\]
We declare $R_{S,C,\delta}(M,\varphi,\sigma)$ to be the set
\[
\{R_{S,C,\delta}(M,\psi,\sigma + (Z_{i},X))\colon X\subseteq U\}.
\]
That is, we consider every extension of $\sigma$ by an ordered pair of the form $(Z_{i},X)$.
We then let $R_{S,C,\delta}(M,\varphi,\sigma)$ be the set of all outputs produced by $R_{S,C,\delta}$ operating on these extensions.
\end{definition}

We observe that $R_{S,C,\delta}(M,\varphi,\sigma)$ is either a function with $S$ or $S\times S$ as its domain, or it is a ordered pair, or it is a set.
Thus two outputs $R_{S,C,\delta}(M_{1},\varphi,\sigma_{1})$ and $R_{S,C,\delta}(M_{2},\varphi,\sigma_{2})$ are equal if they are equal as functions, pairs, or sets.

\begin{definition}[Sympathetic]\label{def:compatability}
Let $S\subseteq \{Z_{1},Z_{2},\ldots\}$ be a finite set of variables, let $C$ be a finite set, and let $\delta$ be a positive integer.
Let $M=(U,c)$ be a $C$\dash coloured system.
Let $\varphi$ be a $\delta$\dash confined \cmso\dash formula satisfying $\var(\varphi)\subseteq S$ and let $\sigma$ be a function from $S-\bound(\varphi)$ to $2^{U}$.
Let $\Pi=(V,d)$ be a $C$\dash coloured complement, where we assume that $U$ and $V$ are disjoint.
Let $\tau$ be a function from $\free(\varphi)$ to $2^{V}$.
We are going to define what it means for $R_{S,C,\delta}(M,\varphi,\sigma)$ and $(\Pi,\tau)$ to be \emph{sympathetic}.

If $\varphi$ is atomic then $R_{S,C,\delta}(M,\varphi,\sigma)$ is a function.
First assume that $\varphi$ is $|Z_{i}|\equiv p\bmod{q}$, so that $R_{S,C,\delta}(M,\varphi,\sigma)$ is the function $r_{1}$ from $S$ to $\{0,1,\ldots, \delta!-1\}$.
In this case we say that $R_{S,C,\delta}(M,\varphi,\sigma)$ and $(\Pi,\tau)$ are sympathetic if
\[r_{1}(Z_{i})+|\tau(Z_{i})|\ \text{is congruent to $p$ modulo $q$}.\]
Next assume that $\varphi$ is $\formula{hyp}(Z_{i})$, so $R_{S,C,\delta}(M,\varphi,\sigma)$ is the function $r_{2}\colon S\to C$.
We declare that $R_{S,C,\delta}(M,\varphi,\sigma)$ and $(\Pi,\tau)$ are sympathetic if
\[d(\tau(Z_{i}),r_{2}(Z_{i}))=1.\]
In the next case we assume $\varphi$ is $Z_{i}\subseteq Z_{j}$, so that $R_{S,C,\delta}(M,\varphi,\sigma)$ is the function $r_{3}\colon S\times S\to \{0,1\}$.
Then $R_{S,C,\delta}(M,\varphi,\sigma)$ and $(\Pi,\tau)$ are sympathetic if $r_{3}(Z_{i},Z_{j})=1$ and $\tau(Z_{i})\subseteq \tau(Z_{j})$.

We will now assume that $\varphi$ is not atomic.
Assume that $\varphi$ is $\neg\psi$.
Then $R_{S,C,\delta}(M,\varphi,\sigma)$ and $(\Pi,\tau)$ are sympathetic if and only if $R_{S,C,\delta}(M,\psi,\sigma)$ and $(\Pi,\tau)$ are \emph{not} sympathetic.

Next assume that $\varphi$ is $\psi_{1}\land\psi_{2}$.
We define $\sigma_{i}$ exactly as in Definition~\ref{def:registries}, so that it is a function from $S-\bound(\psi_{i})$ to $2^{U}$.
For $i=1,2$, let $\tau_{i}$ be the restriction of $\tau$ to $\free(\psi_{i})$.
Now $R_{S,C,\delta}(M,\varphi,\sigma)$ and $(\Pi,\tau)$ are sympathetic if and only if $R_{S,C,\delta}(M,\psi_{i},\sigma_{i})$ and $(\Pi,\tau_{i})$ are sympathetic, for $i=1,2$.

Finally, assume that $\varphi$ is $\exists Z_{i}\psi$.
Then $R_{S,C,\delta}(M,\varphi,\sigma)$ and $(\Pi,\tau)$ are sympathetic if and only if there exist subsets $X\subseteq U$ and $Y\subseteq V$ such that $R_{S,C,\delta}(M,\psi,\sigma+(Z_{i},X))$ and $(\Pi,\tau+(Z_{i},Y))$ are sympathetic.
\end{definition}

\begin{proposition}\label{prop:compatibility}
Let $C$ be a finite set and let $\delta$ be a positive integer.
Let $M=(U, c)$ be a $C$\dash coloured system.
Let $\varphi$ be a $\delta$\dash confined \cmso\dash formula.
Set $S$ to be $\var(\varphi)$ and let $\sigma$ be a function from $\free(\varphi)$ to $2^{U}$.
Let $\Pi=(V,d)$ be a $C$\dash coloured complement where we assume $U$ and $V$ are disjoint, and let $\tau$ be a function from $\free(\varphi)$ to $2^{V}$.
Set $E$ to be $U\cup V$ and define $\theta\colon \free(\varphi)\to 2^{E}$ so that $\theta(Z_{s})=\sigma(Z_{s})\cup \tau(Z_{s})$ for every free variable $Z_{s}$.
Let $N$ be the hypergraph $M\boxplus \Pi$.
Then $\varphi$ is satisfied by $(N,\theta)$ if and only if $R_{S,C,\delta}(M,\varphi,\sigma)$ and $(\Pi,\tau)$ are sympathetic.
\end{proposition}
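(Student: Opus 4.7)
The plan is to prove the proposition by structural induction on $\varphi$, since both $R_{S,C,\delta}$ and the sympathetic relation have been defined recursively to mirror the construction of $\varphi$. Before starting, I would slightly strengthen the statement so that $S$ is allowed to be any finite variable set containing $\var(\varphi)$ and $\sigma$ is any function $S - \bound(\varphi) \to 2^{U}$; this is needed because the conjunction case of the induction hands down inputs that are not of the form $S = \var(\psi_{i})$.

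For the base cases I would directly unpack the three atomic forms. When $\varphi$ is $|Z_{i}| \equiv p \bmod q$, the $\delta$-confinement hypothesis gives $q \mid \delta!$, and disjointness of $U$ and $V$ gives $|\theta(Z_{i})| = |\sigma(Z_{i})| + |\tau(Z_{i})|$, so this sum is congruent to $p$ modulo $q$ if and only if $r_{1}(Z_{i}) + |\tau(Z_{i})|$ is, which is the definition of sympathy. When $\varphi$ is $\formula{hyp}(Z_{i})$, the definition of the coloured sum says that $\sigma(Z_{i}) \cup \tau(Z_{i})$ is a hyperedge of $M \boxplus \Pi$ exactly when $d(\tau(Z_{i}), c(\sigma(Z_{i}))) = 1$, which matches sympathy with $r_{2}(Z_{i}) = c(\sigma(Z_{i}))$. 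When $\varphi$ is $Z_{i} \subseteq Z_{j}$, the disjointness of $U$ and $V$ splits the inclusion $\theta(Z_{i}) \subseteq \theta(Z_{j})$ into the two pieces $\sigma(Z_{i}) \subseteq \sigma(Z_{j})$ and $\tau(Z_{i}) \subseteq \tau(Z_{j})$, which correspond exactly to the two clauses of sympathy.

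The inductive steps follow by unwinding the definitions. Negation is immediate from the induction hypothesis. For $\psi_{1} \land \psi_{2}$, satisfaction by $(N, \theta)$ is equivalent to simultaneous satisfaction of each $\psi_{i}$ by $(N, \theta|_{\free(\psi_{i})})$; the syntactic side condition $\bound(\psi_{i}) \cap \free(\psi_{3-i}) = \emptyset$ in the formation rule for conjunction guarantees that extending $\sigma$ by $(Z_{s}, \emptyset)$ on $\bound(\psi_{3-i}) - \bound(\psi_{i})$ yields a legitimate input $\sigma_{i}$ for $R_{S,C,\delta}(M, \psi_{i}, \cdot\,)$ whose values on those padded variables can never interact with the restriction of $\tau$ to $\free(\psi_{i})$. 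Applying the induction hypothesis to each $\psi_{i}$ then delivers the conjunctive sympathy condition. For $\exists Z_{i} \psi$, a witness $W \subseteq U \cup V$ decomposes uniquely as $W = X \cup Y$ with $X = W \cap U$ and $Y = W \cap V$; the induction hypothesis applied to $\psi$, $\sigma + (Z_{i}, X)$, and $\tau + (Z_{i}, Y)$ then translates the existence of a witness in $N$ into precisely the sympathy condition for the existential.

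The main obstacle is really one of bookkeeping rather than ideas: one must track how the domains of $\sigma$ and $\tau$ evolve through the conjunction step, and one must verify that padding with empty sets does not perturb the registry information relevant to the true free variables. The one genuinely substantive use of the hypotheses is in the modular atomic case, where $\delta$-confinement ensures that the fixed-precision residue $r_{1}(Z_{i}) = |\sigma(Z_{i})| \bmod \delta!$ carries enough information about $|\sigma(Z_{i})|$ to determine the congruence class of $|\sigma(Z_{i})| + |\tau(Z_{i})|$ modulo $q$; without this, the registry would lose information and the base case of the induction would fail.
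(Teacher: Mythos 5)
Your proposal is correct and follows essentially the same route as the paper: structural induction on $\varphi$, with the three atomic cases unpacked directly (using $q\mid\delta!$ for the modular predicate and disjointness of $U$ and $V$ for the other two) and the recursive cases matched clause-by-clause against the definitions of $R_{S,C,\delta}$ and of sympathy. Your observation that the statement should be strengthened to allow any finite $S\supseteq\var(\varphi)$ and any $\sigma\colon S-\bound(\varphi)\to 2^{U}$ is a legitimate refinement of a bookkeeping point the paper's proof passes over silently in the conjunction case.
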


\begin{proof}
The proof is by induction on the number of steps required to construct $\varphi$.
First assume that $\varphi$ is atomic.
Then $R_{S,C,\delta}(M,\varphi,\sigma)$ is one of the functions $r_{1}$, $r_{2}$, or $r_{3}$.
Assume that $\varphi$ is $|Z_{i}|\equiv p\bmod{q}$.
Then $\varphi$ is satisfied by $(N,\theta)$ if and only if
\[|\theta(Z_{i})|=|\sigma(Z_{i})|+|\tau(Z_{i})|\]
is congruent to $p$ modulo $q$.
Note that $|\sigma(Z_{i})|$ can be expressed as $r_{1}(Z_{i})+\kappa\delta!$ for some integer $\kappa$, by the definition of the $r_{1}$ function.
Since $\varphi$ is $\delta$\dash confined it follows that $q\leq \delta$ and therefore $q$ divides $\delta!$.
It follows that $|\sigma(Z_{i})|+|\tau(Z_{i})|$ is congruent to $p$ modulo $q$ if and only if $r_{1}(Z_{i})+|\tau(Z_{i})|$ is congruent to $p$ modulo $q$.
This is true if and only if $R_{S,C,\delta}(M,\varphi,\sigma)$ and $(\Pi,\tau)$ are sympathetic.
Therefore we are done in the case that $\varphi$ is $|Z_{i}|\equiv p\bmod{q}$.

Assume that $\varphi$ is the formula $\formula{hyp}(Z_{i})$.
Then $\varphi$ is satisfied by $(N,\theta)$ if and only if $\theta(Z_{i})=\sigma(Z_{i})\cup \tau(Z_{i})$ is a hyperedge of $M\boxplus \Pi$.
This set of hyperedges is
\[
\{X\cup Y\colon X\subseteq U,\ Y\subseteq V, d(Y,c(X))=1\},
\]
so $\varphi$ is satisfied by $(N,\theta)$ if and only if
\[1=d(\tau(Z_{i}),c(\sigma(Z_{i})))=d(\tau(Z_{i}),r_{2}(Z_{i})).\]
This is exactly what it means for $R_{S,C,\delta}(M,\varphi,\sigma)$ and $(\Pi,\tau)$ to be sympathetic.

Next we assume that $\varphi$ is $Z_{i}\subseteq Z_{j}$.
Then $\varphi$ is satisfied by $(N,\theta)$ if and only if $\sigma(Z_{i})\subseteq \sigma(Z_{j})$ and $\tau(Z_{i})\subseteq \tau(Z_{j})$.
This is true if and only if $r_{3}(Z_{i},Z_{j})=1$ and $\tau(Z_{i})\subseteq \tau(Z_{j})$, which in turn is true if and only if $R_{S,C,\delta}(M,\varphi,\sigma)$ and $(\Pi,\tau)$ are sympathetic.

We are now done with the case that $\varphi$ is atomic.
Therefore we consider the case that $\varphi$ is $\neg\psi$.
Hence $\varphi$ is satisfied by $(N,\theta)$ if and only if $\psi$ is not satisfied by $(N,\theta)$.
By induction, this is the case if and only if $R_{S,C,\delta}(M,\psi,\sigma)$ and $(\Pi,\tau)$ are not sympathetic, which is precisely the condition for $R_{S,C,\delta}(M,\varphi,\sigma)$ and $(\Pi,\tau)$ to be sympathetic.

The next case is when $\varphi$ is $\psi_{1}\land\psi_{2}$.
For $i=1,2$, we let $\tau_{i}$ (respectively $\theta_{i}$) be the restriction of $\tau$ (respectively $\theta$) to $\free(\psi_{i})$.
Define $\sigma_{i}$ to be
\[\sigma\cup \{(Z_{s},\emptyset)\colon Z_{s}\in\bound(\psi_{3-i})-\bound(\psi_{i})\}.\]
Now $\theta_{i}$ takes each $Z_{s}\in\free(\psi)$ to $\sigma_{i}(Z_{s})\cup\tau_{i}(Z_{s})$.
We see that $\varphi$ is satisfied by $(N,\theta)$ if and only if $\psi_{i}$ is satisfied by $(N,\theta_{i})$ for $i=1,2$.
By induction, this is true if and only if $R_{S,C,\delta}(M,\psi_{i},\sigma_{i})$ and $(\Pi,\tau_{i})$ are sympathetic for $i=1,2$.
This is true if and only if $R_{S,C,\delta}(M,\varphi,\sigma)$ and $(\Pi,\tau)$ are sympathetic.

Finally, we assume that $\varphi$ is $\exists Z_{s}\psi$.
We argue as follows.
\begin{align*}
&\quad \varphi\ \text{is satisfied by}\ (N,\theta)\\
\leftrightarrow&\quad \text{there exists}\ D\subseteq E\ \text{such that}\ \psi\ \text{is satisfied by}\ (N,\theta+(Z_{s},D))\\
\leftrightarrow&\quad \text{there exists}\ D\subseteq E\ \text{such that}\ \psi\ \text{is satisfied by}\ (N,\theta')\ \text{where we}\\
&\qquad\qquad\text{define}\ \sigma'=\sigma+(Z_{s},D\cap U)\ \text{and}\ \tau'=\tau+(Z_{s},D\cap V)\\
&\qquad\qquad\text{and $\theta'$ maps each}\ Z_{t}\in \free(\psi)\ \text{to}\ \sigma'(Z_{t})\cup \tau'(Z_{t})\\
\leftrightarrow&\quad \text{there exist}\ X\subseteq U\ \text{and}\ Y\subseteq V\ \text{such that}\ \psi\ \text{is satisfied by}\ (N,\theta')\\
&\qquad\qquad \text{where we define}\ \sigma'=\sigma+(Z_{s},X)\ \text{and}\ \tau'=\tau+(Z_{s},Y)\\
&\qquad\qquad\ \text{and $\theta'$ maps each}\ Z_{t}\in \free(\psi)\ \text{to}\ \sigma'(Z_{t})\cup \tau'(Z_{t})\\
\leftrightarrow&\quad \text{there exist}\ X\subseteq U\ \text{and}\ Y\subseteq V\ \text{such that}\\
&\qquad\qquad R_{S,C,\delta}(M,\psi,\sigma+(Z_{s},X))\ \text{and}\ (\Pi,\tau+(Z_{s},Y))\\
&\qquad\qquad\text{are sympathetic}\\
\leftrightarrow&\quad R_{S,C,\delta}(M,\varphi,\sigma)\ \text{and}\ (\Pi,\tau)\ \text{are sympathetic}
\end{align*}
Now the proof of Proposition~\ref{prop:compatibility} is complete.
\end{proof}

\begin{corollary}\label{cor:reg-equivalance}
Let $C$ be a finite set and let $\delta$ be a positive integer.
For $i=1,2$ let $M_{i}=(U_{i},c_{i})$ be a $C$\dash coloured system.
Let $\varphi$ be a $\delta$\dash confined \cmso\dash formula and for $i=1,2$ let $\sigma_{i}$ be a function from $\free(\varphi)$ to $2^{U_{i}}$.
Let $S$ be $\var(\varphi)$.
If there is a $\varphi$\dash cleft for $(M_{1},\sigma_{1})$ and $(M_{2},\sigma_{2})$, then
\[R_{S,C,\delta}(M_{1},\varphi,\sigma_{1})\ne R_{S,C,\delta}(M_{2},\varphi,\sigma_{2}).\]
\end{corollary}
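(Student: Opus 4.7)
The plan is to prove the contrapositive: I assume that $R_{S,C,\delta}(M_1,\varphi,\sigma_1)=R_{S,C,\delta}(M_2,\varphi,\sigma_2)$ and show that no $\varphi$\dash cleft can exist. Fix any candidate cleft data $(\Pi,\tau)=((V,d),\tau)$ with $V$ disjoint from $U_1\cup U_2$, and let $N_i=M_i\boxplus\Pi$ and $\theta_i(Z)=\sigma_i(Z)\cup\tau(Z)$ as in Definition~\ref{def:cleft-definition}. By Proposition~\ref{prop:compatibility}, $\varphi$ is satisfied by $(N_i,\theta_i)$ if and only if $R_{S,C,\delta}(M_i,\varphi,\sigma_i)$ and $(\Pi,\tau)$ are sympathetic. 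Thus it suffices to show that sympathy with $(\Pi,\tau)$ depends only on the value of the registry and not on the underlying data $(M,\sigma)$ that produced it; once this is in hand, equal registries force both or neither of $(N_1,\theta_1)$, $(N_2,\theta_2)$ to satisfy $\varphi$, contradicting the existence of a $\varphi$\dash cleft.

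The core step is therefore the following structural claim, which I would prove by induction on the construction of $\varphi$: if $R_{S,C,\delta}(M_1,\varphi,\sigma_1)=R_{S,C,\delta}(M_2,\varphi,\sigma_2)$, then for every $C$\dash coloured complement $\Pi$ disjoint from $U_1\cup U_2$ and every $\tau\colon\free(\varphi)\to 2^V$, the two registries are sympathetic with $(\Pi,\tau)$ simultaneously. For the three atomic cases, this is immediate from inspection of Definition~\ref{def:compatability}: the sympathy test for $\varphi$ is expressed entirely in terms of the function $r_1$, $r_2$, or $r_3$ stored in the registry, and the data $\tau$. The negation step is trivial, and for $\varphi=\psi_1\wedge\psi_2$ the registry is literally the ordered pair of the registries of the conjuncts, and sympathy reduces componentwise, so the inductive hypothesis applied to each component finishes the case.

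The only step that really requires care is the existential case $\varphi=\exists Z_i\,\psi$. Here the registry is the \emph{set} $\{R_{S,C,\delta}(M,\psi,\sigma+(Z_i,X)):X\subseteq U\}$, and sympathy of this set with $(\Pi,\tau)$ is defined to mean: there exist $X\subseteq U$ and $Y\subseteq V$ such that $R_{S,C,\delta}(M,\psi,\sigma+(Z_i,X))$ is sympathetic with $(\Pi,\tau+(Z_i,Y))$. The inductive hypothesis on $\psi$ tells us that this sub-sympathy depends only on the \emph{value} $R_{S,C,\delta}(M,\psi,\sigma+(Z_i,X))$ and on $(\Pi,\tau+(Z_i,Y))$. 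Hence the existential over $X$ is really an existential over elements of the set stored in $R_{S,C,\delta}(M,\varphi,\sigma)$; since this set is the same for $M_1$ and $M_2$, the existentials either both succeed or both fail for every $Y$. This is the main obstacle, because it is the place where one must carefully observe that the packaging-as-a-set in Definition~\ref{def:registries} is precisely what makes the existential quantifier's truth value a function of the registry alone. With the claim established, Proposition~\ref{prop:compatibility} translates sympathy back into satisfaction of $\varphi$, and the contrapositive is proved.
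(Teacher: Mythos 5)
Your proposal is correct and follows essentially the same route as the paper: reduce satisfaction of $\varphi$ in the coloured sum to sympathy of the registry with $(\Pi,\tau)$ via Proposition~\ref{prop:compatibility}, and then observe that sympathy depends only on the value of $R_{S,C,\delta}(M,\varphi,\sigma)$, so equal registries cannot be separated by any cleft. The only difference is presentational: the paper treats the well-definedness of sympathy as a function of the registry value as implicit in Definition~\ref{def:compatability}, whereas you prove it explicitly by induction, correctly isolating the existential case (where the quantification over $X\subseteq U$ must be reinterpreted as quantification over the elements of the stored set) as the one step needing care.
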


\begin{proof}
Assume for a contradiction that
\[R_{S,C,\delta}(M_{1},\varphi,\sigma_{1})= R_{S,C,\delta}(M_{2},\varphi,\sigma_{2})\]
and yet we have a $\varphi$\dash cleft for $(M_{1},\sigma_{1})$ and $(M_{2},\sigma_{2})$.
Let this cleft consist of the $C$\dash coloured complement $\Pi=(V,d)$ and the function $\tau\colon \free(\varphi)\to 2^{V}$.
For $i=1,2$, let $\theta_{i}\colon \free(\varphi)\to 2^{U\cup V}$ be the function taking each $Z_{s}\in\free(\varphi)$ to $\sigma_{i}(Z_{s})\cup \tau(Z_{s})$.
Let $N_{i}$ be $M_{i}\boxplus (V,d)$.
Definition~\ref{def:cleft-definition} means that $\varphi$ is satisfied by exactly one of $(N_{1},\theta_{1})$ and $(N_{2},\theta_{2})$.

Proposition~\ref{prop:compatibility} says that $\varphi$ is satisfied by $(N_{1},\theta_{1})$ if and only $R_{S,C,\delta}(M_{1},\varphi,\sigma_{1})$ and $(\Pi,\tau)$ are sympathetic.
As
\[R_{S,C,\delta}(M_{1},\varphi,\sigma_{1})= R_{S,C,\delta}(M_{2},\varphi,\sigma_{2})\]
holds, this is the case if and only if $R_{S,C,\delta}(M_{2},\varphi,\sigma_{2})$ and $(\Pi,\tau)$ are sympathetic, which holds if and only if $\varphi$ is satisfied by $(N_{2},\theta_{2})$.
Now we have contradicted our earlier conclusion.
\end{proof}

Recall that the integer $\Lambda_{\varphi}(s,t,\delta)$ was described in Definition~\ref{def:Lambda-bound}.

\begin{proposition}\label{prop:num-registries}
Let $S\subseteq \{Z_{1},Z_{2},\ldots\}$ be a finite set of variables. Let $C$ be a finite set, and let $\delta$ be a positive integer.
Let $\varphi$ be a $\delta$\dash confined \cmso\dash formula such that $\var(\varphi)\subseteq S$.
As $M$ ranges over all $C$\dash coloured systems $(U,c)$, and $\sigma$ ranges over all functions from $S-\bound(\varphi)$ to $2^{U}$, the number of values taken by $R_{S,C,\delta}(M,\varphi,\sigma)$ is no greater than $\Lambda_{\varphi}(|S|,|C|,\delta)$.
\end{proposition}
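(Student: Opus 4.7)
The plan is to prove the proposition by structural induction on the formula $\varphi$, mirroring the recursive definition of $R_{S,C,\delta}$ in Definition~\ref{def:registries} and tracking the parallel recursive definition of $\Lambda_\varphi$ in Definition~\ref{def:Lambda-bound}. Throughout, let $s=|S|$ and $t=|C|$.

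For the base case I would handle each of the three atomic shapes in turn and observe that the output is a function with a prescribed domain and codomain, so the count reduces to counting functions. If $\varphi$ is $|Z_i|\equiv p\bmod q$, then $R_{S,C,\delta}(M,\varphi,\sigma)$ is a function $S\to\{0,1,\ldots,\delta!-1\}$, and there are at most $(\delta!)^s=\Lambda_\varphi(s,t,\delta)$ such functions. If $\varphi$ is $\formula{hyp}(Z_i)$ then the output lies in $C^S$, giving at most $t^s$ possibilities. If $\varphi$ is $Z_i\subseteq Z_j$ then the output is a function $S\times S\to\{0,1\}$, giving at most $2^{s^2}$ possibilities. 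In each case the bound matches $\Lambda_\varphi(s,t,\delta)$ exactly.

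For the inductive step I would dispatch on the three non-atomic construction rules. When $\varphi=\neg\psi$, the output of $R_{S,C,\delta}(M,\varphi,\sigma)$ coincides with that of $R_{S,C,\delta}(M,\psi,\sigma)$, and $\Lambda_\varphi=\Lambda_\psi$, so the inductive hypothesis applies directly. When $\varphi=\psi_1\wedge\psi_2$, the output is an ordered pair whose coordinates are $R_{S,C,\delta}(M,\psi_i,\sigma_i)$; although $\sigma_1$ and $\sigma_2$ are not independent (they agree on $S-\bound(\varphi)$ and are extended on $\bound(\psi_{3-i})-\bound(\psi_i)$ only by the empty set), the collection of pairs attained is still a subset of the Cartesian product of the individual ranges, so the inductive hypothesis gives at most $\Lambda_{\psi_1}(s,t,\delta)\cdot\Lambda_{\psi_2}(s,t,\delta)=\Lambda_\varphi(s,t,\delta)$ outcomes. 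When $\varphi=\exists Z_i\,\psi$, the output is a set of values of $R_{S,C,\delta}(M,\psi,\sigma+(Z_i,X))$ as $X$ ranges over $2^U$, and hence a subset of the universe of possible $R_{S,C,\delta}(\cdot,\psi,\cdot)$ values; by induction this universe has size at most $\Lambda_\psi(s,t,\delta)$, so there are at most $2^{\Lambda_\psi(s,t,\delta)}=\Lambda_\varphi(s,t,\delta)$ such subsets.

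There is no deep obstacle; the argument is essentially pure bookkeeping. The only places to be careful are (a) ensuring that in the conjunction case the enlarged $\sigma_i$ genuinely determines $R_{S,C,\delta}(M,\psi_i,\sigma_i)$ so that the pair of outputs is a well-defined function of $(M,\sigma)$, (b) in the existential case, recognising that the set is determined by which values of $R_{S,C,\delta}(M,\psi,\cdot)$ appear rather than by individual choices of $X$, and (c) in the base case involving modular counting, noting that one does not need $q$ in the codomain $\{0,\ldots,\delta!-1\}$: the $\delta$-confinement hypothesis ensures $q\le\delta$, but for the counting step only the cardinality of this codomain matters (the compatibility with $q$ is what Proposition~\ref{prop:compatibility} needed, not what is needed here).
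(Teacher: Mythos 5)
Your proposal is correct and follows essentially the same route as the paper: structural induction on $\varphi$, counting functions in the atomic cases, passing the bound through negation, bounding conjunction by the product of ranges via the Cartesian product, and bounding the existential case by the power set of the range for $\psi$. The paper's proof contains nothing beyond this bookkeeping, so no further comment is needed.
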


\begin{proof}
The proof is by induction on the number of steps needed to construct $\varphi$.
Assume $\varphi$ is $|Z_{i}|\equiv p\bmod{q}$.
Then $R_{S,C,\delta}(M,\varphi,\sigma)$ is a function $r_{1}$ from $S$ to $\{0,1,\ldots, \delta!-1\}$.
The number of such functions is at most $(\delta!)^{|S|} = \Lambda_{\varphi}(|S|,|C|,\delta)$, so the result holds.
Similarly, if $\varphi$ is $\formula{hyp}(Z_{i})$, then $R_{S,C,\delta}(M,\varphi,\sigma)$ is a function $r_{2}$ from $S$ to $C$, and there are at most $|C|^{|S|} = \Lambda_{\varphi}(|S|,|C|,\delta)$ such functions.
If $\varphi$ is $Z_{i}\subseteq Z_{j}$, then $R_{S,C,\delta}(M,\varphi,\sigma)$ is the function $r_{3}$ from $S\times S$ to $\{0,1\}$, and there are at most $2^{|S|^{2}}$ such functions.
So the result holds in the case that $\varphi$ is atomic.

If $\varphi$ is $\neg\psi$, then $\bound(\varphi)=\bound(\psi)$ and $R_{S,C,\delta}(M,\varphi,\sigma)=R_{S,C,\delta}(M,\psi,\sigma)$ for every choice of $M$ and $\sigma$.
Furthermore, $\Lambda_{\varphi}(|S|,|C|,\delta) = \Lambda_{\psi}(|S|,|C|,\delta)$.
Thus the result holds by induction.
Therefore we will assume that $\varphi$ is $\psi_{1}\land\psi_{2}$.
We recall that $\bound(\varphi)=\bound(\psi_{1})\cup\bound(\psi_{2})$ and that no variable is bound in exactly one of $\psi_{1}$ and $\psi_{2}$.
Recall also that when $\sigma$ is a function from $S-\bound(\varphi)$ to $2^{U}$, then
\[
\sigma_{i}=\sigma\cup\{(Z_{s},\emptyset)\colon Z_{s}\in\bound(\psi_{3-i})-\bound(\psi_{i})\}
\]
for $i=1,2$.
Now $R_{S,C,\delta}(M,\varphi,\sigma)$ is the ordered pair
\[
(R_{S,C,\delta}(M,\psi_{1},\sigma_{1}),R_{S,C,\delta}(M,\psi_{2},\sigma_{2})).
\]
So the number of values taken by $R_{S,C,\delta}(M,\varphi,\sigma)$ is at most the product of the numbers of values taken by $R_{S,C,\delta}(M,\psi_{1},\sigma_{1})$ and $R_{S,C,\delta}(M,\psi_{2},\sigma_{2})$.
By induction, this is at most $\Lambda_{\psi_{1}}(|S|,|C|,\delta)\Lambda_{\psi_{2}}(|S|,|C|,\delta)$, which is equal to $\Lambda_{\varphi}(|S|,|C|,\delta)$.

Now we must assume that $\varphi$ is $\exists Z_{s}\psi$.
Then $R_{S,C,\delta}(M,\varphi,\sigma)$ is a set of outputs of the form $R_{S,C,\delta}(M,\psi,\sigma+(Z_{s},X))$.
By induction the number of such subsets is at most
\[2^{\Lambda_{\psi}(|S|,|C|,\delta)} = \Lambda_{\varphi}(|S|,|C|,\delta)\]
so the proof is complete.
\end{proof}

\begin{proof}[Proof of \textup{Lemma~\ref{thm:sharp-Myhill-Nerode}}]
Let $s$, $t$, and $\delta$, be positive integers, and let $\varphi$ be a $\delta$\dash confined $\cmso$\dash sentence with $s$ variables.
Let $S$ be the set of variables in $\varphi$.
Let $\mathcal{M}$ be the class of hypergraphs that satisfy $\varphi$ and let $C$ be a set with cardinality $t$.
We claim that the number of equivalence classes of $\sim_{\mathcal{M}, C}$ is no greater than $\Lambda_{\varphi}(s, t,\delta)$.
Let $M_{1},M_{2},\ldots$ be representatives of these equivalence classes, so that each $M_{i}$ is a $C$\dash coloured system.
Since $M_{i}\nsim_{\mathcal{M},C} M_{j}$ when $i\ne j$, we can find a $C$\dash coloured complement $(V,d)$ such that exactly one of $M_{i}\boxplus (V,d)$ and $M_{j}\boxplus (V,d)$ is in $\mathcal{M}$.
Because $\free(\varphi)$ is empty, this complement is a $\varphi$\dash cleft for $M_{i}$ and $M_{j}$.
Corollary~\ref{cor:reg-equivalance} now implies that $R_{S, C, \delta}(M_{i}, \varphi, \emptyset) \ne R_{S, C, \delta}(M_{j}, \varphi, \emptyset)$ when $i\ne j$.
Therefore the number of equivalence classes under $\sim_{\mathcal{M}, C}$ is no greater than the number of values taken by $R_{S,C,\delta}(M,\varphi,\emptyset)$ as $M$ ranges over all $C$\dash coloured systems.
This is at most $\Lambda_{\varphi}(s, t,\delta)$, by Proposition~\ref{prop:num-registries}, so we are done.
\end{proof}

\section{Amalgams}
\label{sec:amalgams}

Let $M_{1}$ and $M_{2}$ be matroids with ground sets
$E_{1}$ and $E_{2}$, rank functions $r_{1}$ and $r_{2}$, and
closure operators $\cl_{1}$ and $\cl_{2}$.
Let $\ell$ be $E_{1}\cap E_{2}$.
We assume that $M_{1}|\ell=M_{2}|\ell$ and we denote this shared restriction by $N$.
If $M$ is a matroid on the ground set $E_{1}\cup E_{2}$ and $M|E_{i}=M_{i}$ for $i=1,2$, then we say $M$ is an \emph{amalgam} of $M_{1}$ and $M_{2}$.

A matroid is \emph{modular} if
$r(F)+r(F')=r(F\cap F')+r(F\cup F')$ whenever $F$ and $F'$ are flats.
Assume that $N$ is a modular matroid.
Then~\cite{Oxl11}*{Theorem~11.4.10} tells us that we can obtain an amalgam of $M_{1}$ and $M_{2}$ by setting the rank of any subset $X\subseteq E_{1}\cup E_{2}$ to be
\begin{equation}
\label{eqn1}
\min\{r_{1}(Y\cap E_{1})+r_{2}(Y\cap E_{2})-r_{N}(Y\cap \ell)\ \colon\
X\subseteq Y\subseteq E_{1}\cup E_{2}\}    
\end{equation}
The resulting matroid is the \emph{proper amalgam} of $M_{1}$ and $M_{2}$.
We refer to the set $\ell$ as the \emph{amalgam base} and
we denote the proper amalgam by \amal{M_{1}}{M_{2}}[\ell], or \amal{M_{1}}{M_{2}} if the amalgam base is clear from the context.
We can easily check that every rank\dash $2$ matroid is modular.
The following result is slightly more powerful than Proposition~4.1 in~\cite{MNW18} because it allows the matroids to be non-simple.

\begin{proposition}\label{prop:amalgams}
For $i=1,2$, let $M_{i}$ be a matroid with ground set $E_{i}$, rank function $r_{i}$, and closure operator $\cl_{i}$.
Let $\ell=E_{1}\cap E_{2}$, where $M_{1}|\ell=M_{2}|\ell$ and $r_{1}(\ell)=2$.
Let $X$ be a subset of $E_{1}\cup E_{2}$.
If $X\cap E_{1}$ is dependent in $M_{1}$ or if $X\cap E_{2}$ is dependent in $M_{2}$, then $X$ is dependent in \amal{M_{1}}{M_{2}}.
If $X\cap E_{1}$ is independent in $M_{1}$ and $X\cap E_{2}$ is independent in $M_{2}$, then $X$ is dependent in \amal{M_{1}}{M_{2}}\ if and only if
\begin{enumerate}[label=\textup{(\roman*)}]
\item $\ell\subseteq \cl_{1}(X\cap E_{1})$ and $X-E_{1}$ is not skew with $\ell$ in $M_{2}$,
\item $\ell\subseteq \cl_{2}(X\cap E_{2})$ and $X-E_{2}$ is not skew with $\ell$ in $M_{1}$, or
\item $\cl_{1}(X-E_{2}) \cap\cl_{2}(X-E_{1})$ contains a non-loop element.
\end{enumerate}
\end{proposition}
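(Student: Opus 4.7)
The plan is to analyse the rank formula \eqref{eqn1} for the proper amalgam directly. Decompose $X = A \sqcup L \sqcup B$, where $A = X - E_{2}$, $B = X - E_{1}$, and $L = X \cap \ell$, so that $X \cap E_{1} = A \cup L$ and $X \cap E_{2} = L \cup B$. The first assertion is immediate, because $M_{i}$ is the restriction of $\amal{M_{1}}{M_{2}}$ to $E_{i}$. Therefore we may throughout assume that both $A \cup L$ and $L \cup B$ are independent in their respective matroids, which in particular forces $|L| \leq r_{N}(\ell) = 2$.

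For the forward direction I exhibit, for each of (i), (ii), (iii), a set $Y \supseteq X$ witnessing $r(X) < |X|$ via \eqref{eqn1}. Under (i) the choice $Y = X \cup \ell$ works: $\ell \subseteq \cl_{1}(A \cup L)$ keeps $r_{1}(Y \cap E_{1}) = |A| + |L|$, while $B$ not being skew with $\ell$ in $M_{2}$ drops $r_{2}(Y \cap E_{2})$ below $|B| + 2$ by at least one, and $r_{N}(Y \cap \ell) = 2$, producing $f(Y) \leq |X| - 1$ where $f$ denotes the quantity being minimised in \eqref{eqn1}. The case (ii) is symmetric. For (iii), let $e \in \cl_{1}(A) \cap \cl_{2}(B)$ be the given non-loop element; then $e \in \ell$, and the independence of $A \cup L$ rules out $e \in L$, so $e \in \ell - L$. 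Take $Y = X \cup \{e\}$: the only subtle point is that $e$ contributes extra rank to $L$ in $N$, which I handle in the $|L| = 1$ subcase by observing that $e \in \cl_{N}(L)$ would force the non-loop $e$ into $\cl_{1}(A) \cap \cl_{1}(L)$, contradicting the skewness of $A$ and $L$ in $M_{1}$.

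For the converse, assume (i), (ii), (iii) all fail; I need $f(Y) \geq |X|$ for every $Y \supseteq X$. Enlarging $Y$ outside $\ell$ only raises $r_{1}(Y \cap E_{1})$ or $r_{2}(Y \cap E_{2})$ without changing $r_{N}(Y \cap \ell)$, so it is enough to minimise $g(Y_\ell) := r_{1}(A \cup Y_\ell) + r_{2}(B \cup Y_\ell) - r_{N}(Y_\ell)$ over $L \subseteq Y_\ell \subseteq \ell$. Introducing the non-negative integers $a(Y_\ell) = r_{1}(A \cup Y_\ell) - (|A| + |L|)$, $b(Y_\ell) = r_{2}(B \cup Y_\ell) - (|B| + |L|)$, and $n(Y_\ell) = r_{N}(Y_\ell) - |L|$, each bounded above by $2 - |L|$, the inequality $g \geq |X|$ becomes $a + b \geq n$, which I establish by case-splitting on $n$.

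The cases $n = 0$ and $n = 2 - |L|$ proceed from the parameters $p_{1} := |A| + 2 - r_{1}(A \cup \ell)$ and $p_{2} := |B| + 2 - r_{2}(B \cup \ell)$. The case $n = 0$ is trivial, and when $n = 2 - |L|$ the set $Y_\ell$ has full rank in $N$, forcing $\cl_{1}(Y_\ell) = \cl_{1}(\ell)$, so $a = 2 - |L| - p_{1}$ and $b = 2 - |L| - p_{2}$; the needed inequality then reduces to $p_{1} + p_{2} \leq 2 - |L|$. The remaining case $|L| = 0$, $n = 1$ is precisely where $\neg$(iii) is used, because $a = b = 0$ forces $Y_\ell \subseteq \cl_{1}(A) \cap \cl_{2}(B)$, and $r_{N}(Y_\ell) = 1$ picks out a non-loop element there, contradicting $\neg$(iii). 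The main technical obstacle is the $n = 2 - |L|$ bookkeeping: since (i) is the conjunction ``$p_{1} = 2 - |L|$ and $p_{2} \geq 1$'', its negation is a disjunction, and deducing $p_{1} + p_{2} \leq 2 - |L|$ from the conjunction of $\neg$(i) and $\neg$(ii) requires a separate small case-check for each of $|L| \in \{0, 1, 2\}$, relying on the a priori bounds $p_{i} \leq 2 - |L|$ coming from the independence of $A \cup L$ and $L \cup B$.
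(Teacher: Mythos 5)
Your proposal is correct, and for the converse direction it takes a genuinely different route from the paper. The first assertion and the ``if'' direction coincide with the paper's argument: the same witnesses $Y=X\cup\ell$ for (i)/(ii) and $Y=X\cup\{e\}$ for (iii) are used. One small omission there: in case (iii) you explicitly treat only $|L|\leq 1$; when $|L|=2$ the element $e$ adds no rank to $L$ in $N$ and your computation gives only $f(Y)\leq|X|$. But $|L|=2$ forces $e\in\cl_{N}(L)$, so the very same skewness contradiction you invoke for $|L|=1$ shows this subcase cannot occur --- the idea is present, just not applied to that subcase. For the ``only if'' direction the paper argues directly: it fixes a minimal $Y$ witnessing $r(X)<|X|$, proves via circuit elimination a sequence of structural claims about $Y$ (that $Y-X\subseteq\ell$, that $Y$ has no loops, that $|Y\cap\ell|\in\{1,2\}$, and that each $y\in Y-X$ lies in $\cl_{1}((Y-y)\cap E_{1})\cap\cl_{2}((Y-y)\cap E_{2})$ but not in $\cl_{N}((Y-y)\cap\ell)$), and then extracts one of (i)--(iii). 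You instead argue contrapositively and arithmetically: after the same reduction to $Y-X\subseteq\ell$, everything is encoded in the local connectivities $p_{1},p_{2}$ and the increments $a,b,n$, and the inequality $a+b\geq n$ falls out of a finite case-check, with $\neg$(i) and $\neg$(ii) handling $n=2-|L|$ and $\neg$(iii) handling the single residual case $|L|=0$, $n=1$. I verified the translation of (i) into ``$p_{1}=2-|L|$ and $p_{2}\geq 1$'' and the three case-checks for $p_{1}+p_{2}\leq 2-|L|$; they are sound. Your version trades the paper's circuit-elimination claims for submodularity bookkeeping, which makes it transparent exactly which negated hypothesis disposes of which case; the paper's version yields more explicit structural information about the minimal witness $Y$, which is not needed elsewhere.
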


\begin{proof}
We use $N$ to denote $M_{1}|\ell = M_{2}|\ell$.
Assume $X\cap E_{1}$ is dependent in $M_{1}$.
Then $X\cap E_{1}$ is dependent in \amal{M_{1}}{M_{2}}
since $\amal{M_{1}}{M_{2}}|E_{1}=M_{1}$.
Thus $X$ is dependent in \amal{M_{1}}{M_{2}}.
By symmetry we conclude that if $X\cap E_{1}$ is dependent in $M_{1}$ or if $X\cap E_{2}$ is dependent in $M_{2}$, then $X$ is dependent in \amal{M_{1}}{M_{2}}.
Henceforth we assume $X\cap E_{1}$ is independent in $M_{1}$ and $X\cap E_{2}$ is independent in $M_{2}$.

Assume statement (i) holds.
Then
\[
r_{2}((X-E_{1})\cup \ell) < r_{2}(X-E_{1})+ r_{2}(\ell) = r_{2}(X-E_{1})+2.
\]
Let $Y$ be $X\cup \ell$, so that
\[r_{1}(Y\cap E_{1}) = r_{1}((X\cap E_{1})\cup \ell) = r_{1}(X\cap E_{1})\]
because $\ell$ is in the closure of $X\cap E_{1}$.
Now
\begin{align*}
|X|&= |X \cap E_{1}|+|X-E_{1}|\\
&= r_{1}(X\cap E_{1})+r_{2}(X-E_{1})\\
&> r_{1}(X\cap E_{1})+r_{2}((X-E_{1})\cup\ell)-2\\
&= r_{1}(Y\cap E_{1})+r_{2}(Y\cap E_{2})-r_{N}(Y\cap \ell),
\end{align*}
so~\eqref{eqn1} implies the rank of $X$ in \amal{M_{1}}{M_{2}} is less than $|X|$ and therefore $X$ is dependent in \amal{M_{1}}{M_{2}}.
By symmetric arguments, we see that if (i) or (ii) holds, then $X$ is dependent in \amal{M_{1}}{M_{2}}.

Next we assume that (iii) holds.
Let $y$ be a non-loop element in
\[\cl_{1}(X-E_{2}) \cap \cl_{2}(X-E_{1})\]
so that $y$ is necessarily in $\ell$.
As $y$ is in $\cl_{1}(X-E_{2})$ there is a circuit of $M_{1}$ contained in $(X-E_{2})\cup y$ that contains $y$.
Since $X\cap E_{1}$ contains no such circuit, it follows that $y$ is not in $X$.

Assume $X\cap \ell$ is non-empty and let $x$ be an arbitrary element of this set.
Assume that $\{x,y\}$ is dependent in $M_{1}$.
Since $X\cap E_{1}$ is independent in $M_{1}$ it follows that $x$ is not a loop.
Therefore $\{x,y\}$ is a circuit.
There is a circuit contained in $(X-E_{1})\cup y$ that contains $y$.
Performing circuit elimination on this circuit and $\{x,y\}$ produces a circuit of $M_{1}$ contained in $X\cap E_{1}$.
This is a contradiction, so $\{x,y\}$ is independent in $M_{1}$.

Assume $X\cap \ell$ contains distinct elements, $x$ and $x'$.
As $X\cap E_{1}$ is independent in $M_{1}$ we see that $\{x,x'\}$ is independent in $M_{1}$.
The previous paragraph shows that $\{x,y\}$ and $\{x',y\}$ are independent in $M_{1}$, so $r(N)=2$ implies that $\{x,x',y\}$ is a circuit.
By performing circuit elimination on $\{x,x',y\}$ and a circuit
contained in $(X-E_{2})\cup y$ that contains $y$, we
obtain a circuit of $M_{1}$ contained in $X\cap E_{1}$.
This contradiction means that $|X\cap \ell|\in\{0,1\}$.

Let $Y$ be $X\cup y$.
Because $y$ is in $\cl_{1}(X-E_{2})$ it follows that $r_{1}(X\cap E_{1}) = r_{1}(Y\cap E_{1})$.
Similarly, $r_{2}(X\cap E_{2}) = r_{2}(Y\cap E_{2})$.
If $X\cap \ell = \emptyset$ then $0=|X\cap \ell| = r_{N}(Y\cap \ell) - 1$.
Now assume that $X\cap \ell = \{x\}$.
Since $\{x,y\}$ is independent in $N$, we have $r_{N}(Y\cap \ell) = 2$, so $|X\cap \ell| = r_{N}(Y\cap \ell) - 1$ holds in either case.
Now we see that
\begin{align*}
|X|&=|X\cap E_{1}|+|X\cap E_{2}|-|X\cap \ell |\\
&=r_{1}(X\cap E_{1})+r_{2}(X\cap E_{2})-|X\cap \ell |\\
&=r_{1}(Y\cap E_{1})+r_{2}(Y\cap E_{2})-(r_{N}(Y\cap \ell)-1)\\
&>r_{1}(Y\cap E_{1})+r_{2}(Y\cap E_{2})-r_{N}(Y\cap \ell).
\end{align*}
Again we see that $X$ is dependent in \amal{M_{1}}{M_{2}}, and
this completes the proof of the `if' direction.

For the `only if' direction, we assume that $X$ is
dependent in \amal{M_{1}}{M_{2}}.
As $X\cap E_{1}$ is independent in $M_{1}$ and
$X\cap E_{2}$ is independent in $M_{2}$, it follows that
$X$ is contained in neither $E_{1}$ nor $E_{2}$.
There is some set $Y$ such that $X\subseteq Y\subseteq E_{1}\cup E_{2}$
and $|X|>r_{1}(Y\cap E_{1})+r_{2}(Y\cap E_{2})-r_{N}(Y\cap \ell)$.
Assume that amongst all such sets, $Y$ has been chosen so that
it is as small as possible.
If $y$ is an element in $Y-(X\cup E_{2})$, then we could replace $Y$ with $Y-y$.
Therefore no such element exists.
By symmetry it follows that $Y-X\subseteq \ell$.

If $Y$ contains a loop element $y$, then $y$ is in $Y-X$, since $X\cap E_{1}$ and $X\cap E_{2}$ are independent in $M_{1}$ and $M_{2}$ respectively.
But in this case we could replace $Y$ with $Y-y$, so $Y$ contains no loops.

If $Y=X$, then $Y\cap E_{1}$ is independent in $M_{1}$ and
$Y\cap E_{2}$ is independent in $M_{2}$, so
$|X|>r_{1}(Y\cap E_{1})+r_{2}(Y\cap E_{2})-r_{N}(Y\cap \ell)=|Y|=|X|$.
This contradiction means that $Y-X$ is non-empty.

\begin{claim}
\label{clm:amalgam-clm1}
If $y$ is in $Y-X$, then
\[y\in \cl_{1}((Y-y)\cap E_{1}) \cap \cl_{2}((Y-y)\cap E_{2})\
\text{but}\ y\notin \cl_{N}((Y-y)\cap \ell).\]
\end{claim}

\begin{proof}
The minimality of $Y$ means that
\begin{multline*}
r_{1}(Y\cap E_{1})+r_{2}(Y\cap E_{2})-r_{N}(Y\cap \ell)\\
<r_{1}((Y-y)\cap E_{1})+r_{2}((Y-y)\cap E_{2})-r_{N}((Y-y)\cap \ell).
\end{multline*}
and the result follows.
\end{proof}

\begin{claim}
\label{clm:amalgam-clm3}
$|Y\cap \ell| \in \{1,2\}$ and if $|Y\cap\ell|=2$, then $Y\cap \ell$ is independent in $N$.
\end{claim}

\begin{proof}
Assume $y$ and $y'$ are distinct elements of $Y\cap \ell$.
If $\{y, y'\}$ is dependent then we can assume without loss of generality that $y$ is not in $X$, or else $X\cap \ell$ is dependent in $N$.
Since $Y$ contains no loops, it follows that $\{y, y'\}$ is a circuit of $N$.
This means that $y$ is in $\cl_{N}((Y-y)\cap \ell)$, and we have contradiction to Claim~\ref{clm:amalgam-clm1}.
Therefore $\{y,y'\}$ is independent.

We know that $Y-X\subseteq \ell$ is non-empty so $Y\cap\ell$ contains at least one element.
Let $y$ be such an element.
Assume $Y\cap \ell$ contains three distinct elements and 
let $y_{1}$ and $y_{2}$ be distinct elements in $(Y\cap \ell)-y$.
Then $\{y_{1}, y_{2}\}$ is independent by the previous paragraph, so $\{y_{1}, y_{2}\}$ spans $N$.
This means that $y$ is in $\cl_{N}((Y-y)\cap \ell)$, a contradiction.
We conclude that $|Y\cap \ell|<3$.
\end{proof}

Assume $X\cap \ell$ contains distinct elements $x$ and $x'$.
Then $\{x,x'\}$ is independent by Claim~\ref{clm:amalgam-clm3}.
In this case $\{x,x'\}$ spans $N$.
We can let $y$ be an element in $Y-X$, and now $y$ is in $\cl_{N}((Y-y)\cap \ell)$, contradicting Claim~\ref{clm:amalgam-clm1}.
Hence $|X\cap \ell|$ is in $\{0,1\}$.

\begin{claim}
\label{clm:amalgam-clm2}
$X-E_{1}$ is not skew with $\ell$ in $M_{2}$ and $X-E_{2}$ is not skew with $\ell$ in $M_{1}$.
\end{claim}

\begin{proof}
Let $y$ be an arbitrary element of $Y-X$.
Claim~\ref{clm:amalgam-clm1} implies there is a circuit $C$ of $M_{2}$ contained in $Y\cap E_{2}$ that contains $y$.
If $C$ contains an element of $X-E_{1}$, then it certifies that $X-E_{1}$ and $\ell$ are not skew in $M_{2}$.
So assume that $C$ is contained in $Y\cap \ell$.
But now $y$ is contained in $\cl_{N}((Y-y)\cap \ell)$, and we have a contradiction.
The claim follows by symmetry.
\end{proof}

If $\ell$ is contained in $\cl_{1}(X\cap E_{1})$ or $\cl_{2}(X\cap E_{2})$, then Claim~\ref{clm:amalgam-clm2} implies that statement (i) or (ii) holds.
In this case we have nothing left to prove, so we assume that neither $X\cap E_{1}$ nor $X\cap E_{2}$ spans $\ell$.

Assume that $Y\cap \ell$ contains a single element $y$.
Then $(Y-y)\cap E_{1} = X-E_{2}$ and $(Y-y)\cap E_{2} = X-E_{1}$.
Claim~\ref{clm:amalgam-clm1} implies that statement (iii) holds.
Therefore we assume that $|Y\cap \ell|\ne 1$, so Claim~\ref{clm:amalgam-clm3} implies that $Y\cap \ell=\{y,y'\}$ for distinct elements $y$ and $y'$.

Assume that $X\cap \ell$ is non-empty, and therefore contains a single element $x$.
Without loss of generality, we assume that $x=y'$.
Now $X\cap E_{1} = (Y-y)\cap E_{1}$, so $\cl_{1}(X\cap E_{1})$ contains both $x$ and $y$ by Claim~\ref{clm:amalgam-clm1}.
As $\{x,y\}$ is independent in $N$ by Claim~\ref{clm:amalgam-clm3}, it follows that $\cl_{1}(X\cap E_{1})$ contains $\ell$, contradicting our earlier assumption.
Therefore $X\cap \ell$ is empty and $Y\cap E_{1} =(X\cap E_{1})\cup \{y,y'\}$.

We reason as follows.
\begin{align*}
r_{1}(X\cap E_{1}) &< r_{1}((X\cap E_{1}) \cup \ell) \quad
\text{(since $\ell\nsubseteq \cl_{1}(X\cap E_{1})$)}\\
&= r_{1}((X\cap E_{1})\cup \cl_{N}(\{y,y'\}))\\
&\leq r_{1}(\cl_{1}((X\cap E_{1})\cup\{y,y'\}))\\
&= r_{1}((X\cap E_{1})\cup\{y,y'\})\\
&\leq r_{1}((X\cap E_{1})\cup \ell)\\
&< r_{1}(X\cap E_{1}) + 2\quad
\text{(since $X\cap E_{1}$ and $\ell$ are not skew)}
\end{align*}
We see that $r_{1}(X\cap E_{1})+1 = r_{1}((X\cap E_{1})\cup\{y,y'\}) = r_{1}(Y\cap E_{1})$.
Symmetrically, $r_{2}(Y\cap E_{2}) = r_{2}(X\cap E_{2})+1$.
Therefore
\begin{multline*}
|X|>r_{1}(Y\cap E_{1})+r_{2}(Y\cap E_{2})-r_{N}(Y\cap \ell)\\
=r_{1}(X_{1}\cap E_{1})+r_{2}(X\cap E_{2}) = |X_{1}\cap E_{1}|+|X\cap E_{2}| = |X|.
\end{multline*}
This final contradiction completes the proof.
\end{proof}

\begin{definition}
Let $G_{1}$ and $G_{2}$ be graphs such that $V(G_{1})\cap V(G_{2}) = \{u,v\}$.
We use $G_{1}\oplus_{uv} G_{2}$ to denote the graph with vertex-set $V(G_{1})\cup V(G_{2})$ and edge-set $E(G_{1})\cup E(G_{2})$.
Let $e\in E(G_{1})\cup E(G_{2})$ be an edge and let $w\in V(G_{1})\cup V(G_{2})$ be a vertex.
Then $e$ and $w$ are incident in $G_{1}\oplus_{uv} G_{2}$ if and only if they are incident in $G_{1}$ or $G_{2}$.
\end{definition}

We note that $\{u,v\}$ is a vertex cut-set of the graph $G_{1}\oplus_{uv} G_{2}$.

\begin{definition}[Gain-graph amalgam]
\label{def:gain-graph-amalgam}
Let $\Gamma$ be a group, and for $i=1,2$ let $\Omega_{i}=(G_{i},\sigma_{i})$ be a $\Gamma$\dash gain-graph such that $V(G_{1})\cap V(G_{2}) = \{u,v\}$.
Set $\ell$ to be $E(G_{1})\cap E(G_{2})$.
We assume the following conditions hold:
\begin{enumerate}[label = \textup{(\roman*)}]
\item $G_{1}[\ell]=G_{2}[\ell]$,
\item $\sigma_{1}(e,x,y)=\sigma_{2}(e,x,y)$ whenever $(e,x,y)$ is in the domains of both $\sigma_{1}$ and $\sigma_{2}$,
\end{enumerate}
Under these circumstances, the \emph{gain-graph amalgam} $\Omega_{1}\oplus_{uv} \Omega_{2}$ is defined.
Let $\sigma$ be the union of $\sigma_{1}$ and $\sigma_{2}$.
Then $\Omega_{1}\oplus_{uv} \Omega_{2}$ is the $\Gamma$\dash gain-graph $(G_{1}\oplus_{uv} G_{2}, \sigma)$.
We say that $\{u,v\}$ is the \emph{base} of the amalgam.
\end{definition}

\begin{lemma}\label{lem:graph-amalgam}
Let $\Gamma$ be a group and for $i=1,2$, let $\Omega_{i}=(G_{i},\sigma_{i})$ be a $\Gamma$\dash gain-graph.
Assume that $V(G_{1})\cap V(G_{2}) = \{u,v\}$ and let $\ell= E(G_{1})\cap E(G_{2})$.
We assume the following conditions hold.
\begin{enumerate}[label = \textup{(\roman*)}]
\item $G_{1}[\ell]=G_{2}[\ell]$,
\item $\sigma_{1}(e,x,y)=\sigma_{2}(e,x,y)$ whenever $(e,x,y)$ is in the domains of both $\sigma_{1}$ and $\sigma_{2}$,
\item $\ell$ contains unbalanced loop edges incident with $u$ and $v$, and
\item whenever $P_{i}$ is a path of $G_{i}$ from $u$ to $v$ for each $i=1,2$, and $\sigma_{1}(P_{1})=\sigma_{2}(P_{2})$, then there is a non-loop edge $e\in \ell$ such that $\sigma_{1}(e,u,v)=\sigma_{2}(e,u,v)=\sigma_{1}(P_{1})=\sigma_{2}(P_{2})$.
\end{enumerate}
Then $F(\Omega_{1}\oplus_{uv} \Omega_{2})=\amal{F(\Omega_{1})}{F(\Omega_{2})}[\ell]$.
\end{lemma}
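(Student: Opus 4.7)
The plan is to show the two matroids have the same dependent sets by appealing to Proposition~\ref{prop:amalgams}. First I would confirm that Proposition~\ref{prop:amalgams} applies to the pair $F(\Omega_{1}), F(\Omega_{2})$. Hypotheses (i) and (ii) of the lemma give $F(\Omega_{1})|\ell = F(\Omega_{2})|\ell$ as matroids; call this common restriction $N$. Hypothesis (iii) ensures $r(N) = 2$: the unbalanced loops at $u$ and $v$ are non-loops of $F(\Omega_{i})$ and are independent in $F(\Omega_{i})$, while $V(\ell) \subseteq \{u,v\}$ caps the rank at $2$. I would also check that $F(\Omega_{1} \oplus_{uv} \Omega_{2})$ is itself an amalgam of $F(\Omega_{1})$ and $F(\Omega_{2})$: a balanced cycle or cycle-free bicycle of $G_{1} \oplus_{uv} G_{2}$ whose edges lie entirely in $E_{i}$ is a balanced cycle or bicycle of $G_{i}$ carrying the same gains by (ii), so $F(\Omega_{1} \oplus_{uv} \Omega_{2})|E_{i}$ has the same circuits as $F(\Omega_{i})$.

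Because the proper amalgam is the freest amalgam (a consequence of submodularity applied to~\eqref{eqn1}), every independent set of $F(\Omega_{1} \oplus_{uv} \Omega_{2})$ is independent in the proper amalgam. The remaining task is the converse: every $X \subseteq E_{1} \cup E_{2}$ that is dependent in $F(\Omega_{1} \oplus_{uv} \Omega_{2})$ is dependent in $\amal{F(\Omega_{1})}{F(\Omega_{2})}[\ell]$. If $X \cap E_{i}$ is already dependent in $F(\Omega_{i})$ for some $i$, this is immediate. Otherwise, $X$ contains a circuit of $F(\Omega_{1} \oplus_{uv} \Omega_{2})$ that is not contained in $E_{i}$ for either $i$, and I must show that one of conditions (i), (ii), or (iii) of Proposition~\ref{prop:amalgams} holds.

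This final case analysis is the main obstacle, and the place where hypothesis (iv) is used. A circuit not contained in $E_{i}$ for either $i$ uses edges of both $E_{1} - E_{2}$ and $E_{2} - E_{1}$, so (as $\{u, v\}$ is a vertex cut of $G_{1} \oplus_{uv} G_{2}$) it contains both $u$ and $v$; a brief geometric argument on the two arcs of the cycle (or the pieces of the bicycle) then shows that no edge of $\ell$ can appear in such a crossing circuit, so in particular any arc of length at least two lies in $E_{i} - \ell = E_{i} - E_{3-i}$. When the crossing circuit is a balanced cycle $C$, it decomposes as $C = P_{1} \cup P_{2}$ with $P_{1} \subseteq X - E_{2}$ a $u$-$v$ path in $G_{1}$ and $P_{2} \subseteq X - E_{1}$ a $u$-$v$ path in $G_{2}$; since $C$ is balanced, $\sigma_{1}(P_{1}) = \sigma_{2}(P_{2})$, and (iv) supplies a non-loop edge $e \in \ell$ of the same gain. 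The balanced cycle $P_{1} \cup \{e\}$ in $G_{1}$ places $e$ in $\cl_{1}(X - E_{2})$, and symmetrically $e \in \cl_{2}(X - E_{1})$, yielding (iii) of Proposition~\ref{prop:amalgams}. The harder subcase is when every crossing circuit in $X$ is a bicycle with no balanced cycle: one must then analyse the possible handcuff and theta configurations crossing the cut, again using (iv) to produce an $e \in \ell$ witnessing condition (iii), or using the unbalanced loops provided by (iii) of the lemma to certify that $X \cap E_{i}$ spans $\ell$ in $F(\Omega_{i})$ while the edges of $X$ on the opposite side exhibit a circuit of $F(\Omega_{3-i})$ meeting both $X - E_{i}$ and $\ell$, yielding condition (i) or (ii).
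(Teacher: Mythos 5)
Your overall plan is sound and one half of it is genuinely slicker than the paper's argument. The paper proves the equality by a double circuit analysis: every circuit of $F(\Omega_{1}\oplus_{uv}\Omega_{2})$ is dependent in the proper amalgam, \emph{and} every circuit of the proper amalgam is dependent in $F(\Omega_{1}\oplus_{uv}\Omega_{2})$, the latter requiring its own lengthy case analysis (applying the ``only if'' direction of Proposition~\ref{prop:amalgams} and then building handcuffs, thetas, or balanced cycles inside the glued graph using the unbalanced loops). You replace that second direction wholesale by the observation that $F(\Omega_{1}\oplus_{uv}\Omega_{2})$ is an amalgam of $F(\Omega_{1})$ and $F(\Omega_{2})$ and that the proper amalgam is the freest one; this is correct (it follows from submodularity, as in Oxley's treatment of amalgams, once you check the restrictions to $E_{1}$ and $E_{2}$, which you do), and it buys you a real shortening. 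Your verification that Proposition~\ref{prop:amalgams} applies (the common restriction has rank exactly $2$ thanks to the unbalanced loops at $u$ and $v$) is also correct and is a point the paper glosses over.

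Two caveats on the remaining direction, which is where the paper (and your proposal) must use hypotheses (iii) and (iv). First, your parenthetical claim that no edge of $\ell$ can appear in a crossing circuit is false for bicycles: a handcuff whose two unbalanced cycles lie on opposite sides of the cut and whose connecting path is a single non-loop edge of $\ell$, or a theta one of whose three $u$--$v$ paths is an $\ell$-edge, is a crossing circuit containing an edge of $\ell$. The statement you actually need --- that any $u$--$v$ arc of length at least two lies in one $E_{i}-\ell$ --- is true and suffices, but the stronger claim should be dropped. Second, the bicycle case is where essentially all of the paper's work in this direction lives (separate treatments of thetas, of handcuffs whose two cycles lie on opposite sides, and of handcuffs for which $C-E_{1}$ is a $u$--$v$ path inside one of the two cycles or inside the connecting path), and your proposal only gestures at it. You have correctly identified the tools (the $\ell$-loops force $\ell\subseteq\cl_{i}(C\cap E_{i})$ and the non-skewness needed for conditions (i)/(ii), or produce a common closure element for (iii)), so I would call this an incomplete execution rather than a wrong approach, but as written the hardest part of the lemma is still outstanding.
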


\begin{proof}
Let $G$ be $G_{1}\oplus_{uv} G_{2}$ and let $\sigma$ be the union of $\sigma_{1}$ and $\sigma_{2}$.
We will prove the lemma by showing that a circuit of $F(G,\sigma)$ is dependent in \amal{F(\Omega_{1})}{F(\Omega_{2})} and a circuit in \amal{F(\Omega_{1})}{F(\Omega_{2})} is dependent in $F(G,\sigma)$.
For $i=1,2$, let $E_{i}$ stand for $E(G_{i})$.

We start by assuming that $C$ is a circuit of $F(G,\sigma)$.
This means that $C$ is either a balanced cycle in $(G,\mathcal{B}(\sigma))$, or it is a bicycle containing no balanced cycles.
If $C$ is contained in $G_{i}$ for some $i$, then $C$ is dependent in $F(\Omega_{i})$.
In this case $C$ is dependent in \amal{F(\Omega_{1})}{F(\Omega_{2})}, by Proposition~\ref{prop:amalgams}.
So we will assume that both $C-E_{1}$ and $C-E_{2}$ are non-empty.

First assume that $C$ is a theta subgraph consisting of three paths between vertices $x$ and $y$.
Any cut-set in this subgraph consisting of two vertices is contained in one of the three paths from $x$ to $y$.
Therefore $u$ and $v$ are both on the same path of $C$ from $x$ to $y$.
From this it follows that we can assume without loss of generality that $C-E_{1}$ is a path joining $u$ to $v$.
Therefore $C\cap E_{1}$ contains an unbalanced cycle made of two of the paths in $C$ joining $x$ to $y$.
Let $q$ be an unbalanced loop incident with either $u$ or $v$.
Now $(C\cap E_{1})\cup q$ is connected and contains two unbalanced cycles, including $q$.
This shows that $q$ is in $\cl_{1}(C\cap E_{1})$.
Since we have a guarantee that $\ell$ contains unbalanced loops incident with $u$ and $v$, it now follows that $\cl_{1}(C\cap E_{1})$ contains $\ell$.
The union of $C-E_{1}$ with loops incident with $u$ and $v$ forms a circuit contained in $\ell\cup(C-E_{1})$.
This circuit contains elements from both $\ell$ and $C-E_{1}$.
Therefore these sets are not skew in $F(\Omega_{2})$.
Now Proposition~\ref{prop:amalgams} tells us that $C$ is dependent in \amal{F(\Omega_{1})}{F(\Omega_{2})}.

Next assume that $C$ is a handcuff.
Then $C$ contains two edge-disjoint cycles, both unbalanced.
Let these cycles be $C_{1}$ and $C_{2}$ and let $P$ be the path of $C$ that joins a vertex of $C_{1}$ to a vertex of $C_{2}$.
Note that $P$ may comprise a single vertex.
Assume that both $C-E_{1}$ and $C-E_{2}$ contain cycles of $C$.
Then $P$ contains a vertex $w\in\{u,v\}$.
Let $q\in \ell$ be an unbalanced loop that is incident with $w$.
Then $C-E_{1}$ contains an unbalanced cycle and a path joining this cycle to $w$, so the union of $q$ and $C- E_{1}$ contains a handcuff with two unbalanced cycles.
This shows that $q$ is in $\cl_{2}(C-E_{1})$.
The same argument shows that $q$ is also in $\cl_{1}(C-E_{2})$.
Proposition~\ref{prop:amalgams} now shows that $C$ is dependent in \amal{F(\Omega_{1})}{F(\Omega_{2})}.
Therefore we assume without loss of generality that $C-E_{1}$ does not contain a cycle.

Now $G[C-E_{1}]$ is a forest with at least one edge, and therefore it contains at least two degree-one vertices.
But $G[C]$ contains no such vertex, so any degree-one vertex in $G[C-E_{1}]$ is incident with edges in both $C-E_{1}$ and $C\cap E_{1}$.
There are at most two vertices ($u$ and $v$) incident with edges in both these sets.
This shows that $G[C-E_{1}]$ contains exactly two degree-one vertices, and these vertices are $u$ and $v$.
Thus $C-E_{1}$ is a subpath of $G[C]$ and its end vertices are $u$ and $v$.
No internal vertex of this path has degree three in $G[C]$, or else it would be a vertex incident with edges in both $C-E_{1}$ and $C\cap E_{1}$, and the only such vertices are $u$ and $v$.
In particular, the vertex in both $C_{1}$ and $P$ is not an internal vertex of $C-E_{1}$.
The same applies to the vertex in both $C_{2}$ and $P$.
Now, up to symmetry, there are two possibilities: the path $C-E_{1}$ is contained in $C_{1}$, or is contained in $P$.

First consider the case that $C-E_{1}$ is contained in $C_{1}$.
Then $G[C\cap E_{1}]$ contains the unbalanced cycle $C_{2}$ as well as the vertices $u$ and $v$.
Since $\ell$ contains unbalanced loops incident with $u$ and $v$, it follows that $\cl_{1}(C\cap E_{1})$ contains $\ell$.
Also, $C-E_{1}$ is a path joining $u$ and $v$, and therefore we can find a circuit of $F(\Omega_{2})$ contained in $(C-E_{1})\cup\ell$ that contains elements from both $C-E_{1}$ and $\ell$.
Again we see that $C$ is dependent in \amal{F(\Omega_{1})}{F(\Omega_{2})}.

Next we assume that $C-E_{1}$ is a subpath of $P$.
One component of $C\cap E_{1}$ contains an unbalanced cycle and $u$.
The other contains an unbalanced cycle and $v$.
As before, we can argue that $\cl_{1}(C\cap E_{1})$ contains $\ell$, and that $\ell$ and $C-E_{1}$ are not skew, so once again $C$ is dependent in \amal{F(\Omega_{1})}{F(\Omega_{2})}.

The remaining case is that $C$ is a balanced cycle of $(G,\sigma)$.
In this case $C$ contains both $u$ and $v$.
For $i=1,2$, let $P_{i}$ be the path of $G_{i}$ from $u$ to $v$ that is contained in $C$.
Since $C$ is balanced, we see that $\sigma(C)$ is the identity, and therefore $\sigma_{1}(P_{1})=\sigma_{2}(P_{2})$.
The hypotheses mean that there is an edge $e$ in $\ell$ joining $u$ to $v$ such that $\sigma(e,u,v)=\sigma_{1}(P_{1})=\sigma_{2}(P_{2})$.
This means that the union of $P_{i}$ with $e$ is a balanced cycle of $\Omega_{i}$.
So $e$ is in $\cl_{i}(C-E_{3-i})$.
Proposition~\ref{prop:amalgams} now tells us that $C$ is dependent in \amal{F(\Omega_{1})}{F(\Omega_{2})}.

We have concluded the argument that a circuit of $F(G,\sigma)$ is dependent in \amal{F(\Omega_{1})}{F(\Omega_{2})}.
Now we will assume that $C$ is a circuit of \amal{F(\Omega_{1})}{F(\Omega_{2})}.
If either $C-E_{1}$ or $C-E_{2}$ is empty, then $C$ is a dependent subset of $F(\Omega_{1})$ or $F(\Omega_{2})$.
In this case $C$ is dependent in $F(G,\sigma)$ and we are done.
So we will assume that $C-E_{1}$ and $C-E_{2}$ are both non-empty.
Therefore $C\cap E_{i}$ is an independent subset of $F(\Omega_{i})$ for $i=1,2$.
Now we can apply Proposition~\ref{prop:amalgams} and deduce that statement (i), (ii), or (iii) from that result holds.

Symmetrical arguments will deal with both (i) and (ii), so we assume the former holds.
Then $\ell$ is contained in $\cl_{1}(C\cap E_{1})$ and $C-E_{1}$ is not skew with $\ell$ in $F(\Omega_{2})$.
Let $w$ be an arbitrary vertex in $\{u,v\}$ and let $q$ be an unbalanced loop incident with $w$.
If $q$ is in $C\cap E_{1}$, then there is a connected component of $C\cap E_{1}$ that contains an unbalanced cycle (namely $q$) and $w$.
If $q$ is not in $C\cap E_{1}$, then $(C\cap E_{1})\cup q$ contains a circuit that contains $q$, and this circuit must be a handcuff.
In either case $C\cap E_{1}$ contains a connected component that contains both an unbalanced cycle and $w$.
We choose such a component and call it $H_{w}$.

Now let $C'$ be a circuit of $F(\Omega_{2})$ that is contained in $\ell\cup (C-E_{1})$ and which contains edges from both $\ell$ and $C-E_{1}$.
Since $G[C']$ is connected, it follows that $G[C'-E_{1}]$ contains at least one of $u$ and $v$.
Assume that $G[C'-E_{1}]$ contains a cycle and the vertex $u$.
Then the union of $C'-E_{1}$ and $H_{u}$ contains a connected component that contains two distinct cycles.
Now it follows that $C$ contains a circuit of $F(G,\sigma)$.
The same argument applies if $C'-E_{1}$ contains $v$ instead of $u$.
Therefore we must assume that $G[C'-E_{1}]$ is a forest.
Then $C'-E_{1}$ contains at least two degree-one vertices.
Since $C'$ contains no such vertex, it follows that any degree-one vertex of $C'-E_{1}$ must share a common vertex with an edge in $\ell$.
Therefore $C'-E_{1}$ has exactly two degree-one vertices, and in fact it is a path of $G_{2}$ between $u$ and $v$.
We consider the union of this path with $H_{u}$ and $H_{v}$.
Note that the union is a connected subgraph of $C$.
If $H_{u}$ and $H_{v}$ are distinct, then this connected subgraph contains two distinct cycles, and therefore $C$ is dependent in $F(G,\sigma)$.
So we assume $H_{u}$ and $H_{v}$ are the same connected component.
This component contains a path of $G_{1}$ from $u$ to $v$.
The union of this path with $C'-E_{1}$ is a cycle and this cycle is distinct from the cycle of $C\cap E_{1}$ contained in $H_{u}=H_{v}$.
We have once again found a connected component of $G[C]$ that contains two distinct cycles, so $C$ is dependent in $F(G,\sigma)$.
We have now concluded the argument when case (i) holds in Proposition~\ref{prop:amalgams}.

We can now assume that case (iii) holds.
This means we can choose an edge $e\in \ell$ and for each $i=1,2$, we can let $C_{i}$ be a circuit of $F(\Omega_{i})$ such that $e\in C_{i}\subseteq (C-E_{3-i})\cup e$.
Assume that $e$ is a loop edge incident with $w\in \{u,v\}$.
Then each $C_{i}$ is a handcuff, and $C- E_{3-i}$ contains a connected component that contains a cycle and the vertex $w$.
Now $G[C]$ contains a connected component with two distinct cycles and we are done.
So we assume $e$ is an edge joining $u$ and $v$.
Then $C-E_{3-i}$ contains a path from $u$ to $v$.
This means that there is a cycle of $C$ containing edges from both $C-E_{1}$ and $C-E_{2}$.
If either $C_{1}-e$ or $C_{2}-e$ contains a cycle, then $C$ contains a component that contains two cycles, so we assume that both $C_{1}-e$ and $C_{2}-e$ are forests.
Since $C_{1}$ and $C_{2}$ have no degree-one vertices, it now follows that both these subgraphs are balanced cycles.
For $i=1,2$, let $P_{i}$ be the path $C_{i}-e$, directed from $u$ to $v$.
Because $C_{1}$ and $C_{2}$ are balanced, we conclude that $\sigma_{1}(P_{1})=\sigma(e,u,v)=\sigma_{2}(P_{2})$.
This means that the union of $P_{1}$ and $P_{2}$ is a balanced cycle of $(G,\sigma)$.
Therefore $C$ contains a circuit of $F(G,\sigma)$ and this completes the proof.
\end{proof}

\begin{lemma}\label{lem:amalgam-is-biased}
For $i=1,2$, let $\Omega_{i}=(G_{i},\mathcal{B}_{i})$ be a biased graph without balanced loops and assume that $V(G_{1})\cap V(G_{2})=\{u,v\}$.
Assume also that in $\Omega_{1}$ and $\Omega_{2}$, every vertex is incident with an unbalanced loop, and that each vertex $w$ has two distinct neighbours $w_{1}$ and $w_{2}$ such that for each $i=1,2$, there is an unbalanced $2$\dash edge cycle containing $w$ and $w_{i}$.
Let $\Omega=(G,\mathcal{B})$ be a biased graph such that $G$ has no isolated vertices and $\amal{F(\Omega_{1})}{F(\Omega_{2})}$ is equal to $F(\Omega)$.
Then $G$ is isomorphic to $G_{1}\oplus_{uv}G_{2}$.
\end{lemma}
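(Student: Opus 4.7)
The plan is to construct an isomorphism from $G_{1}\oplus_{uv} G_{2}$ to $G$ that is the identity on edges (the two graphs share the edge set $E(G_{1})\cup E(G_{2})$, which is the ground set of $F(\Omega)$) and whose vertex map $\phi$ sends each vertex $w$ of $G_{i}$ to the endpoint in $G$ of any of its unbalanced loops. To set this up, I first show that every unbalanced loop $q_{w}$ of $\Omega_{i}$ is a loop edge of $G$. The hypothesis supplies two distinct neighbours $w_{1},w_{2}$ of $w$ in $G_{i}$, each joined to $w$ by an unbalanced $2$-cycle $\{e_{j},e_{j}'\}$. Both the tight handcuff $\{q_{w},e_{j},e_{j}'\}$ and the loose handcuff $\{q_{w},q_{w_{j}},e_{j}\}$ are circuits of $F(\Omega_{i})$, so the rank-$2$ flat $L_{j}$ they span has at least four distinct rank-$1$ subflats, represented by $q_{w}$, $q_{w_{j}}$, $e_{j}$, and $e_{j}'$; the last two are distinct because the unbalanced $2$-cycle $\{e_{j},e_{j}'\}$ is independent in $F(\Omega_{i})$. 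Closing $L_{j}$ inside $F(\Omega)$ yields a long line through $q_{w}$, and the two such lines are distinct because $\{q_{w},e_{1},e_{2}\}$ is neither a balanced cycle nor a bicycle, hence independent. Proposition~\ref{prop:long-lines} then forces $q_{w}$ to be a loop edge of $G$, and I let $\phi_{i}(w)$ denote its endpoint.

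The map $\phi_{i}\colon V(G_{i})\to V(G)$ is well defined because any two unbalanced loops at $w$ in $G_{i}$ form a tight-handcuff $2$-circuit of $F(\Omega_{i})$ and so are parallel in $F(\Omega)$, placing their endpoints at the same vertex of $G$; it is injective because unbalanced loops at distinct vertices of $G_{i}$ share no circuit of $F(\Omega_{i})=F(\Omega)|E(G_{i})$. The crucial remaining step is to show that $\phi_{1}$ and $\phi_{2}$ agree on $\{u,v\}$. Taking unbalanced loops $q^{(1)}\in E(G_{1})$ and $q^{(2)}\in E(G_{2})$ at $u$, I would apply Proposition~\ref{prop:amalgams} to show that $\{q^{(1)},q^{(2)}\}$ is dependent in $\amal{F(\Omega_{1})}{F(\Omega_{2})}$. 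Condition~(iii) of that proposition is the operative one and reduces to the existence of an unbalanced loop at $u$ in $\ell$; I would extract this from the hypothesis that the amalgam is realised as the frame matroid of a biased graph whose underlying graph has no isolated vertices, arguing that otherwise $q^{(1)}$ and $q^{(2)}$ would remain in distinct rank-$1$ flats of $F(\Omega)$ and so correspond to vertices of $G$ inaccessible from the opposite side. Once this is known, $q^{(1)}$ and $q^{(2)}$ are parallel in $F(\Omega)$ and hence lie at a common vertex of $G$, giving $\phi_{1}(u)=\phi_{2}(u)$; the argument at $v$ is identical.

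It remains to check that the combined vertex map $\phi$ preserves edge incidences and is surjective. For a non-loop edge $e$ of $G_{i}$ joining $w$ and $w'$, the loose handcuff $\{q_{w},q_{w'},e\}$ is a $3$-circuit of $F(\Omega)$ in which $q_{w}$ and $q_{w'}$ are loop edges of $G$ at distinct vertices $\phi_{i}(w)$ and $\phi_{i}(w')$; enumerating the possible $3$-circuits of a frame matroid then forces $e$ to be a non-loop edge of $G$ joining $\phi_{i}(w)$ and $\phi_{i}(w')$. Loop edges of $G_{i}$ are handled by the parallel-class argument of the first step. Surjectivity is immediate from the no-isolated-vertices hypothesis, since every vertex of $G$ is incident with some edge of $E(G_{1})\cup E(G_{2})$, whose endpoints in $G$ lie in the image of $\phi$. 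The main obstacle will be the amalgam step of the second paragraph: teasing out the required structural information about $\ell$ from the assumption that the amalgam is a frame matroid of a biased graph with no isolated vertices; the rest is bookkeeping through Propositions~\ref{prop:long-lines} and~\ref{prop:amalgams}.
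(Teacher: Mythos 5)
Your first paragraph (each unbalanced loop lies on two distinct long lines of the amalgam, hence is a loop edge of $G$ by Proposition~\ref{prop:long-lines}) and your third paragraph (incidence bookkeeping via $2$\dash\ and $3$\dash circuits, surjectivity from the no-isolated-vertices hypothesis) match the paper's argument. The genuine gap is exactly where you flag it: the claim that $\phi_{1}(u)=\phi_{2}(u)$. Your plan is to verify condition~(iii) of Proposition~\ref{prop:amalgams} for $X=\{q^{(1)},q^{(2)}\}$, and as you correctly compute, that condition holds if and only if $\ell$ contains an unbalanced loop at $u$. But that is not a hypothesis of the lemma and does not follow from the ones given: the hypotheses only guarantee an unbalanced loop at $u$ in $G_{1}$ and one in $G_{2}$, not a common one in $E(G_{1})\cap E(G_{2})$. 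Your fallback sketch (``otherwise $q^{(1)}$ and $q^{(2)}$ would correspond to vertices of $G$ inaccessible from the opposite side'') is not yet an argument --- being at distinct vertices of $G$ is not by itself a contradiction, and turning it into one requires a rank/counting argument you have not supplied. As written, the agreement step fails.

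The paper avoids the problem rather than solving it. Instead of defining $\phi_{1}$ and $\phi_{2}$ separately and reconciling them at $u$ and $v$, it fixes a set $L$ containing exactly \emph{one} unbalanced loop per vertex of $G_{1}\oplus_{uv}G_{2}$ (so only one loop is ever chosen at $u$, from whichever side), shows via Proposition~\ref{prop:amalgams} that $L$ is independent in the amalgam, and shows $L$ is spanning because every other element lies in a $2$\dash\ or $3$\dash element circuit with members of $L$ inside a single $F(\Omega_{i})$. Thus $L$ is a basis of $F(\Omega)$ consisting (by your long-line argument) of loop edges of $G$ at pairwise distinct vertices; since $L$ spans and $G$ has no isolated vertices, every vertex of $G$ carries exactly one element of $L$, and this forces $|V(G)|=|V(G_{1})|+|V(G_{2})|-2$ and yields the vertex bijection in one stroke. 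The question of whether a $G_{1}$\dash loop at $u$ and a $G_{2}$\dash loop at $u$ are parallel in the amalgam never needs to be answered. If you want to keep your two-map formulation, the counting consequence of the basis argument ($|V(G)|=|V(G_{1})|+|V(G_{2})|-2$ together with injectivity of $\phi_{1}$ and $\phi_{2}$) is what forces the images to overlap in two vertices; but you would still need Proposition~\ref{prop:amalgams} to pin that overlap down to $\{u,v\}$ matched correctly, so the paper's route is the cleaner repair.
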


\begin{proof}
Let $L$ be a set containing exactly one unbalanced loop incident with each vertex in $G_{1}\oplus_{uv}G_{2}$.
Then $L\cap E_{i}$ is a basis of $F(\Omega_{i})$ for each $i=1,2$.
It is easy to verify, using Proposition~\ref{prop:amalgams}, that $L$ is independent in $\amal{F(\Omega_{1})}{F(\Omega_{2})}$.
Let $e$ be an arbitrary element of $\amal{F(\Omega_{1})}{F(\Omega_{2})}$ that is not in $L$.
We can assume without loss of generality that $e$ is an edge of $\Omega_{1}$.
Either $e$ is an unbalanced loop that has a common vertex with a loop in $L$, or it is non-loop edge of $\Omega_{1}$.
In these cases, $e$ is in a $2$\dash\ or $3$\dash element circuit of $F(\Omega_{1})$ that is contained in $L\cup e$.
This circuit is also a circuit of $\amal{F(\Omega_{1})}{F(\Omega_{2})}$.
It now follows that $L$ spans $\amal{F(\Omega_{1})}{F(\Omega_{2})}$, and is therefore a basis of this matroid.
Let $x$ and $y$ be an arbitrary pair of distinct vertices contained in an unbalanced $2$\dash edge cycle of either $\Omega_{1}$ or $\Omega_{2}$.
The edges from the $2$\dash edge cycle along with unbalanced loops incident with $x$ and $y$ form a long line of either $F(\Omega_{1})$ or $F(\Omega_{2})$, and it is not difficult to see that this line is also a line in $\amal{F(\Omega_{1})}{F(\Omega_{2})}$.
Now the hypotheses imply that each element of $L$ is in two distinct long lines of $\amal{F(\Omega_{1})}{F(\Omega_{2})}$.
Now Proposition~\ref{prop:long-lines} implies that the elements of $L$ are all unbalanced loops in $\Omega$.
Since $L$ is independent in $F(\Omega)$, we have no more than one element of $L$ incident with any given vertex of $G$, for any two loops incident with the same vertex form a handcuff.
If there exists some vertex of $G$ not incident with a loop in $L$, then this vertex is incident with an edge $e$ (since $G$ has no isolated vertices), and now $e$ is not spanned by $L$.
This is a contradiction, so every vertex of $G$ is incident with exactly one loop in $L$.
This induces a bijection between the vertices of $G_{1}\oplus_{uv}G_{2}$ and $G$.
Any edge of $G_{1}\oplus_{uv} G_{2}$ that is not in $L$ is incident with at most two elements of $L$, and this shows that the bijection between the vertices of $G_{1}\oplus_{uv}G_{2}$ and $G$ gives us an isomorphism between the two graphs.
\end{proof}

\section{Uniformly locally finite groups}
\label{sec:ULF}

In this section, we will prove that if $\Gamma$ is not a uniformly locally finite group, the class of $\Gamma$\dash gain-graphic matroids is not \cmso\dash definable.
\begin{definition}
Let $\Gamma$ be a group.
A subset $\{a_1, \ldots, a_n\}$ is a \emph{generating set} if it is closed under inverses. Then, for any element $g \in \langle a_1, \ldots, a_n \rangle$ there is a string of the $\{a_i\}$ which evaluates to $g$. Let $f_{\{a_i\}}(g)$ be the minimum length of such a string (letting $f_{\{a_i\}}(\id) = 0$). Note that for $g_1, g_2 \in \langle a_1, \ldots, a_n \rangle$, $f_{\{a_i\}}(g_1g_2) \leq f_{\{a_i\}}(g_1)+f_{\{a_i\}}(g_2)$.
When there is only one generating set in the context, we will drop the subscript and write $f(x) = f_{\{a_i\}}(x)$
\end{definition}

\begin{definition}\label{def:ulf-gadget}
For $n,N \in \bN$, the graph $H_{n,N}^*$ is defined in Figure~\ref{fig:HnN*}.
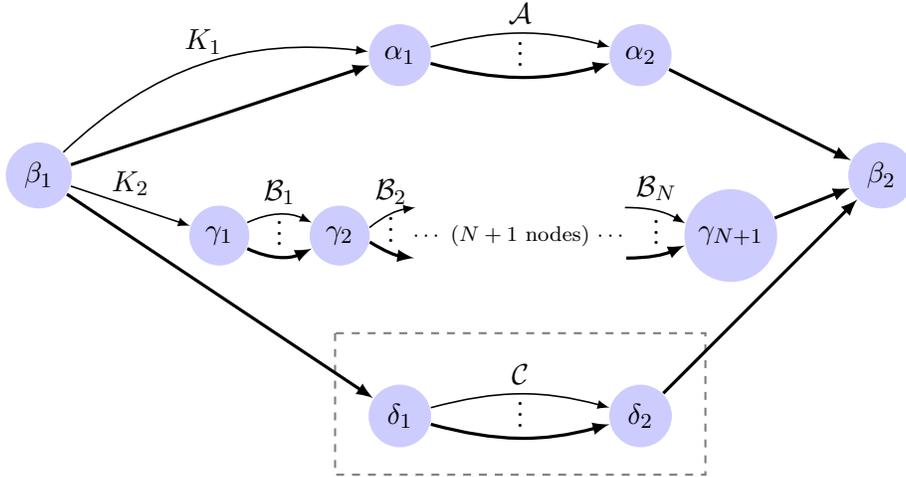
\begin{figure}[htb]
\centering
\begin {tikzpicture}[-latex ,auto,on grid , scale=0.8,
semithick ,
state/.style ={ circle ,top color =white , bottom color = processblue!20 ,
draw,processblue , text=blue , minimum width =1 cm}]
  \node [style1] (n11) at (18,0) {$\alpha_1$};
  \node [style1] (n12) at (22,0) {$\alpha_2$};
  \node [style1] (n21) at (12,-2)  {$\beta_1$};
  \node [style1] (n22) at (26,-2)  {$\beta_2$};
  \node [style1] (n31) at (15,-3)  {$\gamma_1$};
  \node [style1] (n32) at (17,-3)  {$\gamma_2$};
  \node [style3] (n33) at (20,-3)  {\scriptsize $\cdots$ ($N+1$ nodes) $\cdots$};
  \node [style1] (n35) at (23.5,-3)  {$\gamma_{N+1}$};
  \node [style1] (n41) at (18,-6) {$\delta_1$};
  \node [style1] (n42) at (22,-6)  {$\delta_2$};

\draw[gray,thick,dashed] ($(n41.north west)+(-0.7,1.0)$)  rectangle ($(n42.south east)+(0.7,-0.6)$);

\path (n11) edge [bend left =15] node[above] {$\mathcal{A}$} (n12);
\path (n11) edge [bend left =-15,very thick] node[above] {$\vdots$} (n12);
\path (n21) edge [bend left =25] node[above] {$K_1$} (n11);
\path (n21) edge [bend left =0,very thick] node[below] {} (n11);
\path (n12) edge [bend left =0,very thick] node[below] {} (n22);
\path (n21) edge [bend left =0] node[above] {$K_2$} (n31);
\path (n35) edge [bend left =0,very thick] node[above] {} (n22);
\path (n21) edge [bend left =0,very thick] node[above] {} (n41);
\path (n42) edge [bend left =0,very thick] node[above] {} (n22);

\path (n41) edge [bend left =-15,very thick] node[above] {$\vdots$} (n42);
\path (n41) edge [bend left =15] node[above] {$\mathcal{C}$} (n42);

\path (n31) edge [bend left =-25,very thick] node[above] {$\vdots$} (n32);
\path (n31) edge [bend left =25] node[above] {$\mathcal{B}_1$} (n32);

\path (n32) edge [bend left =-12,very thick] node[above] {$\vdots$} (n33);
\path (n32) edge [bend left =15] node[above] {$\mathcal{B}_2$} (n33);

\path (n33) edge [bend left =-12,very thick] node[above] {$\vdots$} (n35);
\path (n33) edge [bend left =15] node[above] {$\mathcal{B}_{N}$} (n35);

\end{tikzpicture}
\caption{The graph $H_{n,N}^{*}$.}
\label{fig:HnN*}
\end{figure}

For ease of notation, we will refer to vertices by lower-case Greek letters, and edges by upper-case Roman letters. Let $G_{\ell,N}^*$ be the subgraph enclosed by the dotted line, and let $\ell_N^*$ be its edge-set. Let $\mathcal{T}$ be the collection of bolded edges. Note that each of $\mathcal{A},\mathcal{B}_1, \ldots, \mathcal{B}_N$ and $\mathcal{C}$ are collections of edges:
\begin{align*}
&\mathcal{A} = \{A_\id, A_1, \ldots, A_n,A_s\}\\
&\mathcal{B}_i = \{B_{i,\id},B_{i,1}, \ldots, B_{i,n}\} \text{ for all }1\leq i\leq N\\
&\mathcal{C} = \{C_{\id}, C_1, \ldots, C_n,C_s\}
\end{align*}
$\mathcal{T}$ has a single element in each of these: the elements $A_\id,B_{i,\id} : 1 \leq i\leq N$, and $C_\id$. Therefore, we have added a single bold line in each collection in the diagram.

Next, we define $H_{n,N}$ as the union of $H^*_{n,N}$ and the following new edges:

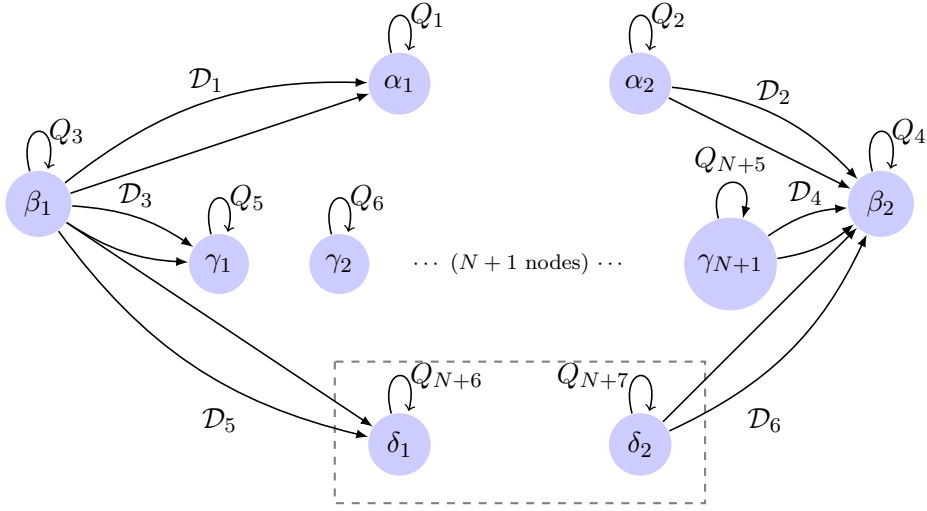
\begin{figure}[htb]
\centering
\begin {tikzpicture}[-latex ,auto,on grid , scale=0.8,
semithick ,
state/.style ={ circle ,top color =white , bottom color = processblue!20 ,
draw,processblue , text=blue , minimum width =1 cm}]
  \node [style1] (n11) at (18,0) {$\alpha_1$};
  \node [style1] (n12) at (22,0) {$\alpha_2$};
  \node [style1] (n21) at (12,-2)  {$\beta_1$};
  \node [style1] (n22) at (26,-2)  {$\beta_2$};
  \node [style1] (n31) at (15,-3)  {$\gamma_1$};
  \node [style1] (n32) at (17,-3)  {$\gamma_2$};
  \node [style3] (n33) at (20,-3)  {\scriptsize $\cdots$ ($N+1$ nodes) $\cdots$};
  \node [style1] (n35) at (23.5,-3)  {$\gamma_{N+1}$};
  \node [style1] (n41) at (18,-6) {$\delta_1$};
  \node [style1] (n42) at (22,-6)  {$\delta_2$};

\draw[gray,thick,dashed] ($(n41.north west)+(-0.7,1.0)$)  rectangle ($(n42.south east)+(0.7,-0.6)$);

\path (n21) edge [bend left =20] node[above] {$\mathcal{D}_1$} (n11);
\path (n21) edge [bend left =0] node[above] {} (n11);
\path (n12) edge [bend left =20] node[above] {$\mathcal{D}_2$} (n22);
\path (n12) edge [bend left =0] node[above] {} (n22);
\path (n21) edge [bend left =15] node[above] {$\mathcal{D}_3$} (n31);
\path (n21) edge [bend left =-15] node[above] {} (n31);
\path (n35) edge [bend left =15] node[above] {$\mathcal{D}_4$} (n22);
\path (n35) edge [bend left =-15] node[above] {} (n22);
\path (n21) edge [bend left =-20] node[below] {} (n41);
\path (n21) edge [bend left =0] node[below] {\begin{tabular}{c}
    ${}$  \\
    ${}$  \\
    $\mathcal{D}_5$
\end{tabular}} (n41);
\path (n42) edge [bend left =-20] node[below] {} (n22);
\path (n42) edge [bend left =0] node[below] {\begin{tabular}{c}
    ${}$  \\
    ${}$  \\
    \ $\mathcal{D}_6$
\end{tabular}} (n22);

\path (n11) edge [loop above] node[right] {$Q_1$} (n11);
\path (n12) edge [loop above] node[right] {$Q_2$} (n12);
\path (n21) edge [loop above] node[right] {$Q_3$} (n21);
\path (n22) edge [loop above] node[right] {$Q_4$} (n22);
\path (n31) edge [loop above] node[right] {$Q_5$} (n31);
\path (n32) edge [loop above] node[right] {$Q_6$} (n32);
\path (n35) edge [loop above,style={min distance=8mm}] node[above] {$Q_{N+5}$} (n35);
\path (n41) edge [loop above] node[right] {$Q_{N+6}$} (n41);
\path (n42) edge [loop above] node[left] {$Q_{N+7}$} (n42);

\end{tikzpicture}
\caption{The graph $H_{n,N} \setminus H_{n,N}^{*}$.}
\label{fig:HnN}
\end{figure}

Each $\mathcal{D}_i : 1 \leq i \leq 6$ is a collection of two edges: $\mathcal{D}_i = \{D_{i,1}, D_{i,2}\}$. Each $Q_i$ is a single loop edge. Let $G_{\ell,N} = G_{\ell,N}^* \cup \{Q_{N+6},Q_{N+7}\}$, and let $\ell_N$ be its edge-set. We will use these new edges at only one point in the argument.

Suppose we have some group $\Gamma$, a generating set $\{a_1, \ldots, a_n\}$, some $s, \mathfrak{M} \in\langle a_1, \ldots, a_n \rangle$. Then, $H^*_{n,N}$ has a $\Gamma$\dash gaining $\sigma^* = \sigma^*(\Gamma,\{a_1, \ldots, a_n\},s,\mathfrak{M})$, defined as follows:
\begin{align*}
    &\sigma^*(T) = \id \ \forall \ T \in \mathcal{T}\\
    &\sigma^*(A_j) = a_j \ \forall \ 1 \leq j \leq n\\
    &\sigma^*(B_{i,j}) = a_j \ \forall \ 1 \leq j \leq n, 1 \leq i \leq N\\
    &\sigma^*(C_j) = a_j \ \forall \ 1 \leq j \leq n
\end{align*}
And $\sigma^*(K_1) = \sigma^*(K_2)=\mathfrak{M}$, $\sigma^*(A_s)=\sigma^*(C_s)=s$. Note all these edges are oriented as in the diagrams, so that, for example, $\sigma^{*}(A_{j})=a_{j}$ is shorthand for $\sigma^{*}(A_{j},\alpha_{1},\alpha_{2})=a_{j}$.
Secondly, given elements
\[d_{i,j} : 1 \leq i \leq 6,\ 1 \leq j \leq 2,
\quad \text{and} \quad q_i : 1 \leq i \leq N+7\]
of $\Gamma$, we can define $\sigma = \sigma(\Gamma,\{a_1, \ldots, a_n\},s,\mathfrak{M}, \{d_{i,j}\}, \{q_i\})$, a $\Gamma$\dash gaining of $H_{n,N}$ extending $\sigma^*$, by
\begin{align*}
    &\sigma(D_{i,j}) = d_{i,j} \ \forall \ 1 \leq i \leq 6,\ 1 \leq j \leq 2\\
    &\sigma(Q_i) = q_i \ \forall \ 1 \leq i \leq N+7
\end{align*}
These edges are also oriented as in the diagrams. We will always assume
\begin{equation}
\text{\begin{tabular}{c}
    $f_{\{a_i\}}(s)=N$,\\
    $f_{\{a_i\}}(\mathfrak{M}) \geq 2N+1$, and\\
     the $\{d_{i,j}\}$, and $\{q_i\}$ are chosen so that \\
     no cycle containing a $D_{i,j}$ or $Q_i$ is balanced
\end{tabular}} \tag{$\dagger$}
\end{equation}
Subject to this assumption, the balanced cycles of $\sigma$ depend only on the choice of $\{a_1, \ldots, a_n\}$ and $s$.

Let us also define $\Omega(\Gamma,\{a_1, \ldots, a_n\},s,\mathfrak{M}, \{d_{i,j}\}, \{q_i\})$ as the induced biased graph on $H_{n,N}$.
\end{definition}

\begin{remark}\label{rmk:can-find-monsters}
    Let $\Gamma$ be an infinite group, and let $S$ be a finite collection of strings over the characters $\Gamma \cup \{x\}$, where each string contains exactly one copy of $x$. Given an element $g \in \Gamma$ and $s \in S$, there is an evaluation $s(g) \in \Gamma$, where we replace $x$ with $g$. Since $\Gamma$ is infinite, there will always be an element $g \in \Gamma$ such that for all $s \in S$, $s(g) \neq \id$.
    Thus, we can always find elements satisfying condition $(\dagger)$ in Definition~\ref{def:ulf-gadget}.
\end{remark}
Since we will use it several times, we note a specialisation of [13, Lemma 5.3]:
\begin{fact}\label{fact:identity-tree}
Let $\Gamma$ be a group, $G$ a finite graph, and $\sigma$ a $\Gamma$\dash gaining of $G$. Let $G_2$ be a subgraph such that in $(G_2,\sigma|_{G_2})$ every cycle is balanced. Then, there is a $\Gamma$\dash gaining $\sigma_2$ of $G$, with the same balanced and unbalanced cycles as $\sigma$, such that for all $e \in G_2$, $\sigma_2(e) = \id$.
\end{fact}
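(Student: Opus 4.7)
The plan is to realise $\sigma_{2}$ as a switching of $\sigma$. Since $\mathcal{B}(\sigma^{\rho})=\mathcal{B}(\sigma)$ for every switching function $\rho\colon V(G)\to\Gamma$, any such switching automatically preserves the balanced and unbalanced cycles, so the only task is to arrange that every edge of $G_{2}$ receives gain $\id$.

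The first step is to choose a spanning forest $T$ of $G_{2}$ and extend it to a maximal forest $T'$ of $G$. Applying \cite{Zas89}*{Lemma~5.3} (as cited in the preliminaries), there is a switching function $\rho$ so that $\sigma^{\rho}(e,u,v)=\id$ for every non-loop edge $e\in T'$. In particular, every non-loop edge of $T\subseteq T'$ has gain $\id$ under $\sigma^{\rho}$.

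The second step is to verify that every edge of $G_{2}$ has gain $\id$ under $\sigma^{\rho}$. If $e\in G_{2}$ is a loop at a vertex $u$, then $\{e\}$ is a cycle of $G_{2}$, which is balanced by hypothesis, so $\sigma(e,u,u)=\id$; switching does not alter the gain of a loop, so $\sigma^{\rho}(e,u,u)=\id$. If $e\in G_{2}$ is a non-loop edge with endpoints $u,v$, then $u$ and $v$ lie in the same component of $G_{2}$, so $T$ contains a path $P$ from $u$ to $v$. The cycle $P\cup\{e\}$ is contained in $G_{2}$ and is therefore balanced, while the edges of $P$ have gain $\id$ under $\sigma^{\rho}$; hence $\sigma^{\rho}(e,u,v)=\id$.

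Setting $\sigma_{2}:=\sigma^{\rho}$ then gives the required gaining. No serious obstacle is anticipated; the only point of care is to treat loops separately, since switching never changes a loop's gain and so balance of a loop already in the original $\sigma$ is what certifies that its gain is already the identity.
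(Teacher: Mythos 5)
Your proof is correct, and it is exactly the standard switching argument that the paper implicitly relies on: the paper states this Fact without proof as a specialisation of Zaslavsky's Lemma~5.3, which is the very switching-a-maximal-forest-to-identity statement you invoke. Your elaboration — extending a spanning forest of $G_{2}$ to a maximal forest of $G$, switching it to the identity, and then using balance of the cycles of $G_{2}$ (preserved under switching) to handle the remaining non-tree edges and the loops — fills in the details correctly, including the observation that switching fixes loop gains.
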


\begin{lemma}\label{lem:Myhill-Nerode-application}
    Let $\mathcal{M}$ be a \cmso-definable class of matroids. Let $\Gamma$ be a group, and let $\{(\Xi_j,\sigma_j) : j \in J\}$ be a collection of $\Gamma$\dash gained finite graphs, over an index set $J$, all containing a fixed copy of a fixed graph $G_\ell$, with edge-set $\ell$. Thus, without loss of generality, we may take $G_\ell = \Xi_i \cap \Xi_j$ for any $i \neq j$.
Assume also that $G_\ell$ has exactly two vertices, and in every $(\Xi_j,\sigma_j)$ each vertex has an unbalanced loop edge. Then, there is a finite partition $J = J_1 \cup \cdots\cup J_n$ such that for all $l$, for all $j_1,j_2 \in J_l$, and for all $k$, 
    \[\amal{F(\Xi_{j_1},\sigma_{j_1})}{F(\Xi_k,\sigma_k)}[\ell] \in \mathcal{M} \leftrightarrow \amal{F(\Xi_{j_2},\sigma_{j_2})}{F(\Xi_k,\sigma_k)}[\ell] \in \mathcal{M}.\]
    That is, whether the amalgam is in the class depends only on where in the partition the amalgam components reside.
\end{lemma}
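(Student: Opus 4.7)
The plan is to apply Lemma~\ref{thm:sharp-Myhill-Nerode} to $\mathcal{M}$. We will encode each $(\Xi_j,\sigma_j)$ as a $C$-coloured system $(U_j,c_j)$ and each $(\Xi_k,\sigma_k)$ as a $C$-coloured complement $(V_k,d_k)$, for a finite colour set $C$ depending only on $\ell$, so that the coloured sum $(U_j,c_j)\boxplus(V_k,d_k)$ is precisely the independence hypergraph of $\amal{F(\Xi_j,\sigma_j)}{F(\Xi_k,\sigma_k)}[\ell]$. Once this is done, Lemma~\ref{thm:sharp-Myhill-Nerode} says that $\sim_{\mathcal{M},C}$ has only finitely many equivalence classes, and the partition of $J$ induced by grouping indices whose coloured systems are $\sim_{\mathcal{M},C}$-equivalent is the one we want.

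To set up the encoding, take $U_j=E(\Xi_j)\setminus\ell$ and $V_k=E(\Xi_k)$; we may relabel so that $U_j\cap V_k=\emptyset$ in every pairing (the usual disjoint-copies trick covers the case $j=k$). Because each vertex of $G_\ell$ carries an unbalanced loop, $F(\Xi_j,\sigma_j)|_\ell$ is a fixed rank-$2$ matroid $N$, independent of $j$. Define the colour $c_j(X)$ of a subset $X\subseteq U_j$ to record, relative to $F(\Xi_j,\sigma_j)$ and for every $L\subseteq\ell$, the bit asserting that $X\cup L$ is independent and the subset $\cl_j(X\cup L)\cap\ell\subseteq\ell$, together with one extra bit indicating whether $X$ is skew with $\ell$. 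Since $\ell$ is a fixed finite set, the collection $C$ of all such data is finite.

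For the coloured complement, given $Y\subseteq V_k$ and a colour $c\in C$, declare $d_k(Y,c)=1$ precisely when $Y$ is independent in $F(\Xi_k,\sigma_k)$, the component of $c$ indexed by $L=Y\cap\ell$ certifies that $X\cup L$ is independent in $F(\Xi_j,\sigma_j)$, and none of the three dependence conditions (i)--(iii) of Proposition~\ref{prop:amalgams} is triggered. Each of those conditions involves only data about $X$ relative to $\ell$ (recorded in $c$), data about $Y$ relative to $\ell$ in $F(\Xi_k,\sigma_k)$ (computable from $Y$ and $k$), and the fixed matroid $N$, so $d_k(Y,c)$ really is a function of $Y$ and $c$ alone. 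By Proposition~\ref{prop:amalgams}, the coloured sum $(U_j,c_j)\boxplus(V_k,d_k)$ has as its hyperedges exactly the independent sets of $\amal{F(\Xi_j,\sigma_j)}{F(\Xi_k,\sigma_k)}[\ell]$, so membership of the amalgam in $\mathcal{M}$ coincides with membership of the coloured sum in $\mathcal{M}$.

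Since $\mathcal{M}$ is \cmso-definable (and being a matroid is itself \cmso-expressible), Lemma~\ref{thm:sharp-Myhill-Nerode} bounds the number of $\sim_{\mathcal{M},C}$-equivalence classes by a finite constant. Partition $J$ by placing $j_1,j_2$ in the same block whenever $(U_{j_1},c_{j_1})\sim_{\mathcal{M},C}(U_{j_2},c_{j_2})$. By the definition of $\sim_{\mathcal{M},C}$ and the correspondence built above, membership of $\amal{F(\Xi_{j_i},\sigma_{j_i})}{F(\Xi_k,\sigma_k)}[\ell]$ in $\mathcal{M}$ depends only on the block of $j_i$, giving the required statement. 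The main technical hurdle is bookkeeping: spelling out cases (i)--(iii) of Proposition~\ref{prop:amalgams} against $c_j(X)$ and verifying that the listed data really does suffice. Everything the proposition needs about the $M_1$-side is either a rank/closure relation between $X$ and some subset of $\ell$ or a skewness assertion, and each of these has been stored in $c_j(X)$ by design, so the verification is routine.
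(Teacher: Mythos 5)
Your proposal is correct and follows essentially the same route as the paper: use Proposition~\ref{prop:amalgams} to realise the amalgam as a $C$\dash coloured sum of a coloured system built from one side and a coloured complement built from the other, then invoke Lemma~\ref{thm:sharp-Myhill-Nerode} to get a finite partition. The only difference is bookkeeping --- the paper keeps $\ell$ on the coloured-system side (colouring all of $2^{E_j}$ by closure-in-$\ell$, dependence, and skewness data) while you keep $\ell$ on the complement side and compensate by recording the data of $X\cup L$ for every $L\subseteq\ell$; both encodings capture exactly what Proposition~\ref{prop:amalgams} needs.
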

\begin{proof}
    Let $\cl_j$ be the matroid closure operator of $F(\Xi_j,\sigma_j)$, and let $E_j$ be the edge-set of $\Xi_j$ for each $j$. We finitely colour each $\mathcal{P}(E_j)$ as follows:    Given $X \subseteq E_j$,
    \begin{multline*}
        t_j(X) = (\cl_j(X) \cap \ell, \cl_j(X \setminus \ell) \cap \ell,\\ \text{Is $X$ dependent?}, \text{Is $X \setminus \ell$ skew with $\ell$?})     
    \end{multline*}
    Where the last two terms are boolean values. Let $T$ be the set of colours (which does not depend on $j$). The pairs $(E_j, t_j)$ are then $T$-coloured systems.

    Note that for any $j,k$, and any set $X \cup Y \subseteq \amal{F(\Xi_{j},\sigma_{j})}{F(\Xi_k,\sigma_k)}[\ell]$, where $X \subseteq E_j$ and $Y \subseteq E_k \setminus \ell$, whether $X \cup Y$ is dependent depends only on $t_j(X)$ and $Y$, by Proposition~\ref{prop:amalgams}. Thus, for each $j$, let
    \[d_j : \mathcal{P}(E_j \setminus \ell) \times T \rightarrow \{0,1\}\]
    be the function which, given a subset $Y$ and colour $c \in T$, returns whether $X \cup Y$ would be independent for any $X \subseteq E_k$ such that $t_k(X) = c$. Given this definition, for any $j,k$, we have
    \[\amal{F(\Xi_{j},\sigma_{j})}{F(\Xi_k,\sigma_k)}[\ell] \cong (E_j,t_j)\boxplus(E_k,d_k).\]
    By Lemma~\ref{thm:sharp-Myhill-Nerode}, there is a finite partition $J = J_1 \cup \cdots\cup J_n$ such that for all $l \in [n]$, all $j_1,j_2,k \in J$, \[(E_{j_1},t_{j_1})\boxplus(E_k,d_k) \in \mathcal{M} \leftrightarrow (E_{j_2},t_{j_2})\boxplus(E_k,d_k) \in \mathcal{M}.\]
    By the above isomorphism, this concludes the proof.
\end{proof}

The next theorem shows that an infinite group $\Gamma$ must be locally finite in order for the class of $\Gamma$\dash gain-graphic matroids to be \cmso\dash definable.

\begin{theorem}\label{thm:uniformly-locally-finite-2}
Let $\Gamma$ be a group, and let $\{a_1, \ldots, a_n\} \subseteq \Gamma$ be a generating set which generates an infinite subgroup. Then the class of $\Gamma$\dash gain-graphic matroids is not \cmso\dash definable.
\end{theorem}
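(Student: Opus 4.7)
The plan is to assume for contradiction that $\mathcal{M}$, the class of $\Gamma$\dash gain-graphic matroids, is \cmso\dash definable, and to use Lemma~\ref{lem:Myhill-Nerode-application} together with the gadget family $\{H_{n,N}\}_{N\ge 2}$ to force a contradiction by pigeonhole. Since $\langle a_1,\ldots,a_n\rangle$ is infinite, the function $f_{\{a_i\}}$ is unbounded on the generated subgroup, so for each $N\ge 2$ I may choose $s_N$ with $f_{\{a_i\}}(s_N)=N$. Remark~\ref{rmk:can-find-monsters} then supplies $\mathfrak{M}_N$ with $f(\mathfrak{M}_N)\ge 2N+1$ together with auxiliary elements $\{d_{i,j}^{(N)}\},\{q_i^{(N)}\}$ satisfying $(\dagger)$, yielding $\Gamma$\dash gain-graphs $\Omega_N=(H_{n,N},\sigma_N)$. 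Every $\Omega_N$ contains the same boundary subgraph $G_\ell$ on $\{\delta_1,\delta_2\}$ whose edges are the $n+2$ parallel $\mathcal{C}$\dash edges together with two unbalanced loops. For $N\ge 2$ the gains $\{\id,a_1,\ldots,a_n,s_N\}$ on $\mathcal{C}$ are pairwise distinct, so the matroid restriction $F(\Omega_N)|\ell$ is the same matroid for every $N\ge 2$, every vertex of $G_\ell$ is incident with an unbalanced loop by $(\dagger)$, and the hypotheses of Lemma~\ref{lem:Myhill-Nerode-application} are met.

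Lemma~\ref{lem:Myhill-Nerode-application} partitions $\{N:N\ge 2\}$ into finitely many classes, so by pigeonhole there are $N_1<N_2$ in the same class. Taking $\Omega_{N_1}$ as the probe, the Lemma gives
\[
\amal{F(\Omega_{N_1})}{F(\Omega_{N_1})}[\ell]\in\mathcal{M} \iff \amal{F(\Omega_{N_2})}{F(\Omega_{N_1})}[\ell]\in\mathcal{M}.
\]
The first amalgam lies in $\mathcal{M}$: the identical gaining $\sigma_{N_1}$ on both copies gives a $\Gamma$\dash gaining of $H_{n,N_1}\oplus_{\delta_1\delta_2}H_{n,N_1}$, and the hypotheses of Lemma~\ref{lem:graph-amalgam} can be verified (the $Q_i$\dash loops supply condition~(iii), while the $(\dagger)$-constrained auxiliary parameters can be chosen to ensure that every path-gain coincidence between the two copies is witnessed by a $\mathcal{C}$\dash edge, securing~(iv)), so this frame matroid coincides with the matroid amalgam and is $\Gamma$\dash gain-graphic.

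The heart of the argument is showing the second amalgam is \emph{not} in $\mathcal{M}$. Lemma~\ref{lem:amalgam-is-biased} applies (the $Q_i$\dash loops give unbalanced loops at every vertex, and each vertex sits in unbalanced 2-edge cycles to at least two distinct neighbours via the $\mathcal{D}_j$\dash pairs and the multi-edge collections $\mathcal{A},\mathcal{B}_i,\mathcal{C}$), so any biased graph whose frame matroid equals the amalgam has underlying graph $H_{n,N_2}\oplus_{\delta_1\delta_2}H_{n,N_1}$ with some induced linear class $\mathcal{B}$. Suppose a $\Gamma$\dash gaining $\tau$ realised $\mathcal{B}$. Because the amalgam matroid restricts to $F(\Omega_{N_{3-i}})$ on $E(H_{n,N_{3-i}})$, the restriction $\tau|_{E(H_{n,N_2})}$ induces the same linear class of balanced cycles as $\sigma_{N_2}$ on the connected graph $H_{n,N_2}$, and likewise on side~2; a standard fact then yields vertex switchings $\rho_1,\rho_2$ on the respective vertex sets with $\tau|_{E(H_{n,N_2})}=\sigma_{N_2}^{\rho_1}$ and $\tau|_{E(H_{n,N_1})}=\sigma_{N_1}^{\rho_2}$. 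Well-definedness of $\tau$ on the shared edges of $\ell$ then forces agreement: $C_{\id}$ yields $g:=\rho_1(\delta_1)\rho_2(\delta_1)^{-1}=\rho_1(\delta_2)\rho_2(\delta_2)^{-1}$; each $C_j$ ($1\le j\le n$) yields $g^{-1}a_jg=a_j$, so $g$ centralises $\langle a_1,\ldots,a_n\rangle$; and $C_s$ yields $g^{-1}s_{N_2}g=s_{N_1}$. Since $g$ commutes with $s_{N_1}\in\langle a_i\rangle$, we conclude $s_{N_2}=s_{N_1}$, contradicting $f_{\{a_i\}}(s_{N_2})=N_2\neq N_1=f_{\{a_i\}}(s_{N_1})$. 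The most delicate step is the justification that the cross-balanced cycles contributed by $\mathcal{B}$ do not further obstruct either of the one-sided switching identifications above; one checks this using Fact~\ref{fact:identity-tree} to normalise spanning trees on each side.
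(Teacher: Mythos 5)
Your overall architecture matches the paper's: the gadgets $H_{n,N}$, Lemma~\ref{lem:Myhill-Nerode-application} plus pigeonhole, and the two amalgam computations. However, both of your key verifications have genuine gaps. For the claim that $\amal{F(\Omega_{N_1})}{F(\Omega_{N_1})}[\ell]\in\mathcal{M}$, taking the identical gaining $\sigma_{N_1}$ on both copies does \emph{not} work, and no choice of the auxiliary parameters $d_{i,j},q_i$ can repair it. The obstruction lives in the core edges $K_1$ and $K_1'$: the path $\delta_1\beta_1\,K_1\,\alpha_1\alpha_2\beta_2\delta_2$ (through tree edges and $A_{\id}$) has gain $\mathfrak{M}_{N_1}$ on each side, so the cycle formed by these two paths is balanced in the naive graph amalgam, yet it is independent in the proper matroid amalgam because no $\mathcal{C}$\dash edge carries the gain $\mathfrak{M}_{N_1}$ (all $\mathcal{C}$\dash gains have $f$\dash value at most $N_1$ while $f(\mathfrak{M}_{N_1})\geq 2N_1+1$); equivalently, condition~(iv) of Lemma~\ref{lem:graph-amalgam} fails for exactly this pair of paths. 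One must re-gain $K_1'$ and $K_2'$ with a fresh element $\mathfrak{M}'$ of much larger word-length (this leaves $F(\Omega_{N_1}')$ unchanged, since $K_1'$ and $K_2'$ occur in balanced cycles only jointly, where their gains cancel) before Lemma~\ref{lem:graph-amalgam} can be applied.

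The second gap is the ``standard fact'' you invoke in the non-membership direction: two $\Gamma$\dash gainings of a connected graph that induce the same set of balanced cycles need \emph{not} be related by switching. Already on two vertices joined by three parallel edges with gains $\id,x,y$ versus $\id,z,w$ (all of $x,y,z,w$ non-identity, $x\neq y$, $z\neq w$), the balanced cycles coincide (there are none) while switching fixing the identity edge acts only by simultaneous conjugation, so the gainings are generally inequivalent. Consequently you cannot conclude $\tau|_{E(H_{n,N_i})}=\sigma_{N_i}^{\rho_i}$, and the ensuing computation with $g=\rho_1(\delta_1)\rho_2(\delta_1)^{-1}$ collapses. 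The correct route, which is the raison d'\^etre of the $\mathcal{B}_i$ layers of the gadget, is to normalise $\tau$ on $\mathcal{T}\cup\mathcal{T}'$, set $h_j=\tau(C_j)$ and $h_s=\tau(C_s)$ (which need bear no relation to the $a_j$ and $s_N$), and show that the balanced cycles of each side force $f_{\{h_j\}}(h_s)=N_i$; since the $\mathcal{C}$\dash edges are shared, this yields $N_1=N_2$ without ever identifying $\tau$ with a switch of $\sigma_{N_i}$.
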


\begin{proof}
We may restrict to the case where the $\{a_i\}$ contain two elements $\alpha_1 \neq\alpha_2$ such that $\alpha_1^{-1}\neq \alpha_2$, since we may freely add new pairs $\{g,g^{-1}\} \subseteq \langle a_1 \ldots a_n \rangle$ to the generating set.
For each $1 \leq N < \omega$, fix some
\[s_N,\mathfrak{M}_N, d_{i,j} : 1 \leq i \leq 6,\ 1 \leq j \leq 2\quad\text{and}\quad q_i : 1 \leq i \leq 10\]
all in $\Gamma$ satisfying condition $(\dagger)$ in Definition~\ref{def:ulf-gadget}.
Let $\sigma_N$ be $\sigma(\Gamma,\{a_1, \ldots, a_n\},s_N,\mathfrak{M}_N,\{d_{i,j}\},\{q_i\})$, the $\Gamma$-gaining of $H_{n,N}$.
Then, for ease of notation, for each $N \geq 1$, let $H_N = (H_{n,N},\sigma_N)$, and let $\Omega_N$ be the associated biased graph. Note that any $H_N$ contains $G_{\ell,N}$, and the $\{G_{\ell,N}\}$ are all isomorphic as $\Gamma$\dash gained graphs. So, we could arrange that the $H_n$ are all sub-graphs of some larger graph $\mathbb{G}$, and that the $G_{\ell,N}$ are actually equal, and for any $N,N'$ distinct, $H_N \cap H_{N'} = G_{\ell,N}$. Thus, given any two $H_N,H_{N'}$, we can construct both the matroid amalgam $\amal{F(H_N)}{F(H_{N'})}$ over the base $F(\ell_N) = F(\ell_{N'})$, and the graph amalgam $H_{n,N}\oplus_{\delta_1,\delta_2} H_{n,N'}$ over the base $\{\delta_1,\delta_2\} = \{\delta_1',\delta_2'\}$.
Throughout the proof, when we amalgamate it will always be over these bases.
By Lemma~\ref{lem:Myhill-Nerode-application}, if the class of $\Gamma$\dash gain-graphic matroids is \cmso\dash definable, there is a finite partition of $\omega$ such that for any $N,N'$, whether $\amal{F(H_N)}{F(H_{N'})}$ is a $\Gamma$\dash gain-graphic matroid depends only on which classes in the partition $N$ and $N'$ belong to. Thus, for a contradiction it suffices to show that the amalgam $\amal{F(H_N)}{F(H_{N'})}$ is a $\Gamma$\dash gain matroid if and only if $N=N'$ for all $N,N' \geq 1$. There are two directions to this.

First, suppose $\amal{F(H_N)}{F(H_{N'})}$ is a $\Gamma$\dash gain-graphic matroid, built from some $\Gamma$-gained graph $(X,\tau_0)$, with associated biased graph $\Omega$. Without loss of generality, we may assume $X$ has no isolated vertices. Note that $X$ has the same edge-set as $Y = H_{n,N}\oplus H_{n,N'}$. We claim that the triple $\Omega_N,\Omega_N',\Omega$ satisfies the conditions of Lemma~\ref{lem:amalgam-is-biased}.
Certainly no loop edges in $H_{N}$ or $H_{N'}$ are balanced.
The desired unbalanced loops are supplied by $Q_1, \ldots, Q_{N+7},Q_1', \ldots, Q_{N'+7}'$.
Next, for the desired unbalanced cycles, note that by assumption each $\mathcal{A},\mathcal{A}',\mathcal{B}_i,\mathcal{B}_i',\mathcal{C},\mathcal{C}'$ contains an unbalanced cycle, since we assumed $\{a_i\}$ contains two distinct elements which are not reciprocals of each other. Note also by property $(\dagger)$ in Definition~\ref{def:ulf-gadget}, each $\mathcal{D}_i$ and $\mathcal{D}_i'$ contains an unbalanced cycle.
Finally, by assumption $\amal{F(H_N)}{F(H_{N'})} = F(X,\tau_0)$ and $X$ contains no isolated vertices. Thus, applying the lemma, we conclude $X \cong Y$.
So, by our assumption, there is a $\Gamma$\dash gaining $\tau_0$ of $Y$ such that $F(Y,\tau_0) = \amal{F(H_N)}{F(H_{N'})}$. Let us distinguish the edges in $F(H_N)$ from those in $F(H_{N'})$ by adding a tick to those in $F(H_{N'})$, and doing the same for collections of edges.
Thus, $K_1 \in F(H_N)$, $K_1' \in F(H_{N'})$, and e.g.\ $C_1 = C_1'$, since these edges lie in the amalgamation base. By Fact~\ref{fact:identity-tree}, without loss of generality we may assume $\tau_0(\mathcal{T} \cup \mathcal{T}') = \{\id\}$. Then, let $\tau = \tau_0|_{H_{n,N}}$, $\tau' = \tau_0|_{H_{n,N'}}$ be the two natural restrictions of $\tau_0$.
By switching, we can assume that $\tau(\mathcal{C}_{\id}) = \id$.
Let $h_{s}$ stand for the element $\tau(\mathcal{C}_s)$, and
let $h_{j}$ stand for the element $\tau(\mathcal{C}_j)$ for all $j$.
For each $j \in \{1, \ldots, n\}$, by considering the balanced cycle $C_j^{-1}\ \delta_1\beta_1\alpha_1 \ A_j \ \alpha_2 \beta_2 \delta_2$ (where we omit an edge if it is in $\mathcal{T}$), we must have $\tau(A_j) = \tau(C_j) = h_j$.
Similarly, we must have $\tau(A_s) = h_s$.
Next, observe that since we have the balanced cycle
\[\alpha_1 \alpha_2 \beta_2 \gamma_{N+1} \gamma_{N} \cdots \gamma_1\ K_2^{-1}\ \beta_1\ K_1,\]
we must have $\tau(K_1) = \tau(K_2)$.
For each $1 \leq i \leq N$ and each $1 \leq j \leq n$, we consider the balanced cycle
\[\alpha_1\ A_i\ \alpha_2 \beta_2 \gamma_{N+1} \gamma_{N} \cdots \gamma_{i+1}\ B_{i,j}^{-1}\ \gamma_i \gamma_{i-1} \cdots \gamma_1 \ K_2^{-1}\ \beta_1\ K_1\ \alpha_1,\]
and conclude $\tau(B_{i,j}) = \tau(A_j) = h_j$.
Now, let $\eta:\{1, \ldots, N\} \rightarrow \{h_1, \ldots, h_n\} \cup \{\id\}$ be a string of length at most $N$ in the $h_i$. There is a path $P_\eta$ from $\gamma_1$ to $\gamma_{N+1}$ such that for each $k$, the edge $P_\eta$ takes between $\gamma_{k}$ and $\gamma_{k+1}$ is mapped to $\eta(k)$ by $\tau$.
Let $C_\eta$ be the cycle obtained from $P_\eta$ by appending $\gamma_{N+1} \beta_2 \alpha_2 \ A_s^{-1} \ \alpha_1 \ K_1^{-1}\ \beta_1 \ K_2 \ \gamma_1$.
Thus, $\tau(C_\eta) = \tau(P_\eta) h_s^{-1}$. Since $f_{\{a_i\}}(s) = N$, there exists $\eta$ a string of length $N$ such that $\sigma_N(C_\eta)=\id$, but not one of smaller length.
So there exists $\eta$ of length $N$ such that $\tau(C_\eta)=\id$ but not one of smaller length. In other words, $f_{\{h_i\}}(h_s) = N$. Symmetrically, if we let $\tau'(C_j) = h_j'$ for all $j$, and $\tau'(C_s) = h_s'$, then $f_{\{h_i'\}}(h_s') = N'$. 
But the edges in $\mathcal{C}$ are all in the amalgamation base, so $\tau$ and $\tau'$ agree on $\mathcal{C}$, so
\[N = f_{\{h_i\}}(h_s) = f_{\{h_i'\}}(h_s') = N'.\] Now we have shown the `only if' direction.

We prove the other direction. Fix some $N \geq 1$.
We must show that $\amal{F(H_N)}{F(H_N)}$ is a $\Gamma$\dash gain-graphic matroid.
As above, let us distinguish the two copies of $H_{n,N}$ by adding a tick to the names of the components of the second copy.
Thus, $K_1' \in H_{n,N'}$, $K_1' \not\in H_{n,N}$.
Let $\sigma_N'$ be the gaining $\sigma_N$, applied to $H_{n,N'}$.
Note that $\sigma_N$ and $\sigma_N'$ agree on the amalgamation base.

Let $Y$ be $H_{n,N}\oplus H_{n,N'}$.
Our goal is to use Lemma~\ref{lem:graph-amalgam} to construct a $\Gamma$\dash gaining of $Y$ such that the corresponding frame matroid is equal to $\amal{F(H_N)}{F(H_{N'})}$.
We observe that we cannot simply use $(H_{n,N} \oplus_{\delta_1,\delta_2} H_{n,N'},\sigma_N \cup \sigma_N')$, as for instance the cycle \[\delta_1 \beta_1 \ K_1 \ \alpha_1 \alpha_2 \beta_2 \delta_2 \beta_2' \alpha_2' \alpha_1'\  (K_1')^{-1} \ \beta_1' \delta_1\] (where, as before, we omit an edge if it is in $\mathcal{T} \cup \mathcal{T}'$) would be balanced, whereas that cycle is not balanced in the matroid amalgam.
Thus, we need our gaining to have different values on $K_1$ and $K_1'$.

Choose some $\mathfrak{M}' \in \Gamma$ such that $f_{\{a_i\}}(\mathfrak{M}') = 4N+2$. Let us define a gaining $\tau^{*}{}'$ of $H_{n,N'}$ which operates as follows:
\begin{align*}
     (A,u,v) &\mapsto \begin{cases}
     \sigma_N'(A,u,v) : A \in H_{n,N'}, A \not \in \{K_1',K_2'\}\\
     \mathfrak{M}' : (A,u,v) \in \{(K_1',\beta_1',\alpha_1'),(K_2', \beta_1',\gamma_1')\}\\
     (\mathfrak{M}')^{-1} : (A,u,v) \in \{(K_1',\alpha_1',\beta_1'),(K_2', \gamma_1',\beta_1')\}\\
     \end{cases}
\end{align*}
Then, let us extend this to $\tau'$ a $\Gamma$\dash gaining of $H_{n,N'}$ by first letting $\tau(Q_i') = \sigma_N(Q_i)$ for $i=N+6,N+7$, and then choosing the remaining values of $\tau(Q_i')$ such that no $Q_i'$ is a balanced loop, and then choosing values of $\tau(D_{i,j}')$ such that no cycle containing a $D_{i,j}'$ is balanced. We can do this since the group is infinite. Note $\tau'$ agrees with $\sigma_N$ on $\ell_N = \ell_{N'}$.

We claim $F(H_{n,N'},\tau') = F(H_{n,N'},\sigma_N')$.
Note both sides agree that any cycle containing a $D_{i,j}'$ or a $Q_i'$ is unbalanced. So the problem reduces to showing that
\[F(H^*_{n,N'},\tau^*{}') = F(H^*_{n,N'},\sigma{{}^*}_N').\]
Note that $\tau^*{}'$ disagrees with $\sigma{{}^*}'_N$ only on $K_1'$ and $K_2'$.
Further, since $K_{1}'$ and $K_{2}'$ are incident with a common vertex, and 
\[\tau^*{}'((K_1')^{-1} K_2') = \sigma{{}^*}_N'((K_1')^{-1} K_2') = \id,\]
the only cycles they could possibly disagree on are those containing exactly one of $K_1'$ and $K_2'$.
But for any such cycle, both $\tau^*{}'$ and $\sigma{{}^*}_N'$ agree that it is unbalanced. So as desired
\[F(H_{n,N'},\tau') = F(H_{n,N'},\sigma_N').\]
Now that we have this identity, we want to show $F(H_{n,N}\oplus_{\ell_N} H_{n,N'},\sigma_N \cup \tau')$ is the matroid $\amal{F(H_N)}{F(H_{N'})} = \amal{F(H_N)}{F(H_{n,N'},\tau')}$, by Lemma~\ref{lem:graph-amalgam}. We must prove that this data satisfies its conditions. Note (i), (ii) are immediate from the definition of $\tau'$, and (iii) is satisfied by $Q_{N+6}$ and $Q_{N+7}$, so there is only one condition of Lemma~\ref{lem:graph-amalgam} remaining: to show that for any paths $P \subseteq H_{n,N}$, $P' \subseteq H_{n,N'}$, both starting at $\delta_1$ and ending at $\delta_2$, and $\sigma_N(P) = \tau'(P')$, there is some edge $E$ from $\delta_1$ to $\delta_2$ such that $\sigma_N(E) = \sigma_N(P) = \tau'(P')$.
Fix such $P,P'$. Their union is a balanced cycle, so they must lie in $H^*_{n,N}$ and $H^*_{n,N'}$, respectively.
Then, in particular, $f_{\{a_i\}}(P) = f_{\{a_i\}}(P')$. By inspection $f_{\{a_i\}}(P) \leq 3N+1$.
If $P'$ passes through $K_1'$ or $K_2'$, $f_{\{a_i\}}(P') \geq 3N+2$. Therefore, $P'$ passes through neither $K_1'$ nor $K_2'$.
Hence, $\tau'(P') \in \{\id,a_1, \ldots, a_n, s\}$, and so there is some edge $E \in \mathcal{A}$ such that $\sigma_N(E) = \sigma_N(P) = \tau'(P')$.
The conditions of the Lemma are satisfied, so $\amal{F(H_N)}{F(H_{N'})} = F(H_{n,N}\oplus_{\ell_N} H_{n,N'},\sigma_N \cup \tau')$ is a $\Gamma$-gain graphic matroid. \end{proof}

In the next result, we show how Theorem~\ref{thm:uniformly-locally-finite} follows from the locally finite case (Theorem~\ref{thm:uniformly-locally-finite-2}) by using ultraproducts.

\begin{theorem}\label{thm:uniformly-locally-finite}
Let $\Gamma$ be an infinite group that is not uniformly locally finite.
The class of $\Gamma$\dash gain-graphic matroids is not \cmso\dash definable.
\end{theorem}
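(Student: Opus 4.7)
The plan is to reduce to Theorem~\ref{thm:uniformly-locally-finite-2} by passing to an ultrapower of $\Gamma$. Fix a non-principal ultrafilter $\cU$ on $\bN$ (which exists by the axiom of choice) and form $\Gamma^{\cU}$.

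Since $\Gamma$ is not uniformly locally finite, Corollary~\ref{cor:ulf-iff-up-lf} tells us that $\Gamma^{\cU}$ is not locally finite. Hence there is a finite subset $\{g_{1},\ldots,g_{m}\}\subseteq \Gamma^{\cU}$ that generates an infinite subgroup. Close this set under inverses to obtain a finite generating set $\{a_{1},\ldots,a_{n}\}\subseteq \Gamma^{\cU}$ (in the sense of Definition preceding Theorem~\ref{thm:uniformly-locally-finite-2}) whose generated subgroup $\langle a_{1},\ldots,a_{n}\rangle$ is still infinite, since it contains $\langle g_{1},\ldots,g_{m}\rangle$.

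Now apply Theorem~\ref{thm:uniformly-locally-finite-2} to the group $\Gamma^{\cU}$ and the generating set $\{a_{1},\ldots,a_{n}\}$: the class of $\Gamma^{\cU}$-gain-graphic matroids is not \cmso\dash definable. By Corollary~\ref{cor:gain-matroids-are-same}, this class coincides with the class of $\Gamma$\dash gain-graphic matroids, so the latter class is also not \cmso\dash definable, completing the proof.

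There is essentially no obstacle here beyond assembling the three ingredients in the right order; the content of the theorem has already been packed into the ultrapower machinery of Section~\ref{subsec:ultrapowers} (which replaces ``not uniformly locally finite'' with the much stronger ``not locally finite'' for $\Gamma^{\cU}$) and the gadget construction used in Theorem~\ref{thm:uniformly-locally-finite-2} (which only needs a finite generating set of an infinite subgroup, matching exactly what failure of local finiteness provides). The only subtlety worth flagging is that one must verify the generating set is closed under inverses, which is immediate after adjoining the inverses of $g_{1},\ldots,g_{m}$.
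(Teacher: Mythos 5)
Your proposal is correct and follows exactly the paper's argument: pass to an ultrapower via a non-principal ultrafilter, use Corollary~\ref{cor:ulf-iff-up-lf} to upgrade ``not uniformly locally finite'' to ``$\Gamma^{\cU}$ not locally finite,'' apply Theorem~\ref{thm:uniformly-locally-finite-2}, and transfer back with Corollary~\ref{cor:gain-matroids-are-same}. The only difference is that you spell out the (routine) step of closing the generating set under inverses, which the paper leaves implicit.
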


\begin{proof}
Assume $\Gamma$ is not uniformly locally finite.
Let $\mathcal{U}$ be a non-principal ultrafilter on $\bN$, and let $\Gamma^{\mathcal{U}}$ be the ultrapower of $\Gamma$.
Corollary~\ref{cor:ulf-iff-up-lf} says that $\Gamma^{\mathcal{U}}$ is not locally finite.
Therefore Theorem~\ref{thm:uniformly-locally-finite-2} implies that the class of $\Gamma^{\mathcal{U}}$\dash gain-graphic matroids is not \cmso\dash definable.
Corollary~\ref{cor:gain-matroids-are-same} now tells us that the class of $\Gamma$\dash gain-graphic matroids is not \cmso\dash definable.
\end{proof}

\section{The conviviality graph}
\label{sec:convivial}

Recall that a monomorphism is an injective homomorphism.

\begin{definition}[Conviviality graph]
\label{def:convivialty-graph}
We fix the finite group $F$.
Let $H$ be an infinite group which has a copy of $F$ as a subgroup.
Let $\mathcal{H}$ be the set of all pairs $(\Gamma,\psi)$, where
\begin{enumerate}[label = \textup{(\roman*)}]
\item $\Gamma$ is a finite group,
\item there exists a monomorphism from $\Gamma$ into $H$, and
\item $\psi$ is a monomorphism from $F$ into $\Gamma$.
\end{enumerate}
Define the equivalence relation $\sim$ on $\mathcal{H}$ such that $(\Gamma_{1},\psi_{1})\sim (\Gamma_{2},\psi_{2})$ if and only if there exists an isomorphism $\theta\colon \Gamma_{1}\to \Gamma_{2}$ such that $\psi_{2}=\theta\circ \psi_{1}$.
Say that $(\Gamma_{1},\psi_{1})$ and $(\Gamma_{2},\psi_{2})$ are representatives of two equivalence classes.
If there are monomorphisms $\theta_{1}\colon \Gamma_{1}\to H$ and $\theta_{2}\colon \Gamma_{2}\to H$ such that $\theta_{1}\circ \psi_{1} = \theta_{2}\circ \psi_{2}$, then we say that $(\Gamma_{1},\psi_{1})$ and $(\Gamma_{2},\psi_{2})$ are \emph{$F$\dash convivial} in $H$.
It is easy to see that the choice of representatives does not change whether the pair is $F$\dash convivial, so we can think of conviviality as being a relation on equivalence classes.
Note that every equivalence class is convivial with itself, since if $(\Gamma_{1},\psi_{1})\sim (\Gamma_{2},\psi_{2})$, then there is an isomorphism $\theta: \Gamma_1 \rightarrow \Gamma_2$ witnessing this, and then for any $\theta_2$, a monomorphism of $\Gamma_2$ into $H$, $\theta_2 \circ \theta$ is a monomorphism of $\Gamma_1$ into $H$, and $\theta_2 \circ \theta \circ \psi_1 = \theta_2 \circ \psi_2$, by the definition of $\theta$.
The \emph{elementary $F$\dash conviviality} graph of $\Gamma$ has the set of equivalence classes $\mathcal{H}/\!\!\sim$ as its vertex-set, where $(\Gamma_{1},\psi_{1})$ and $(\Gamma_{2},\psi_{2})$ are adjacent if and only if they are $F$\dash convivial.

We now define the equivalence relation $\approx$ on the vertices of the elementary conviviality graph so that two vertices are equivalent if they have exactly the same neighbours.
Note that this requires that the vertices are adjacent since every vertex is self-adjacent.
Now the \emph{$F$\dash conviviality graph} of $H$ has the equivalence classes of $\approx$ as its vertices.
Two equivalence classes are adjacent in the conviviality graph if and only if representative vertices from those classes are adjacent.
\end{definition}

\begin{definition}\label{def:conv-gadget}
Given finite groups $\Gamma_1 \leq \Gamma_2$, the graph $\Lambda^*_{\Gamma_1,\Gamma_2}$ is defined by Figure~\ref{fig:Lambda*G1G2}.\par
\begin{figure}[htb]
\centering
\begin {tikzpicture}[-latex ,auto,on grid , scale=0.8,
semithick ,
state/.style ={ circle ,top color =white , bottom color = processblue!20 ,
draw,processblue , text=blue , minimum width =1 cm}]
  \node [style1] (n11) at (18,0) {$\alpha_1$};
  \node [style1] (n12) at (22,0) {$\alpha_2$};
  \node [style1] (n21) at (13,-2)  {$\beta_1$};
  \node [style1] (n22) at (27,-2)  {$\beta_2$};
  \node [style1] (n31) at (18,-3.5)  {$\gamma_1$};
  \node [style1] (n32) at (20,-1.5)  {$\gamma_2$};
  \node [style1] (n33) at (22,-3.5)  {$\gamma_3$};
  \node [style1] (n41) at (18,-6) {$\delta_1$};
  \node [style1] (n42) at (22,-6)  {$\delta_2$};

\draw[gray,thick,dashed] ($(n41.north west)+(-0.7,1.0)$)  rectangle ($(n42.south east)+(0.7,-0.6)$);

\path (n11) edge [bend left =15] node[above] {$\mathcal{A}$} (n12);
\path (n11) edge [bend left =-15,very thick] node[above] {$\vdots$} (n12);
\path (n21) edge [bend left =25] node[above] {$K_1$} (n11);
\path (n21) edge [bend left =0,very thick] node[below] {} (n11);
\path (n12) edge [bend left =0,very thick] node[below] {} (n22);
\path (n21) edge [bend left =0] node[above] {$K_2$} (n31);
\path (n33) edge [bend left =0,very thick] node[above] {} (n22);
\path (n21) edge [bend left =0,very thick] node[above] {} (n41);
\path (n42) edge [bend left =0,very thick] node[above] {} (n22);

\path (n41) edge [bend left =-15,very thick] node[above] {$\vdots$} (n42);
\path (n41) edge [bend left =15] node[above] {$\mathcal{C}$} (n42);

\path (n32) edge [bend left =-25,very thick] node[above] {$\mathcal{B}_1$} (n31);
\path (n32) edge [bend left =25] node[above] {$\ddots$ \ \ } (n31);
\path (n33) edge [bend left =-25,very thick] node[above] {$\mathcal{B}_2$} (n32);
\path (n33) edge [bend left =25] node[above] {\ $\Ddots$} (n32);
\path (n31) edge [bend left =-25,very thick] node[above] {$\vdots$} (n33);
\path (n31) edge [bend left =0] node[above] {$\mathcal{B}_3$} (n33);

\end{tikzpicture}
\caption{The graph $\Lambda^{*}_{\Gamma_1,\Gamma_2}$}
\label{fig:Lambda*G1G2}
\end{figure}
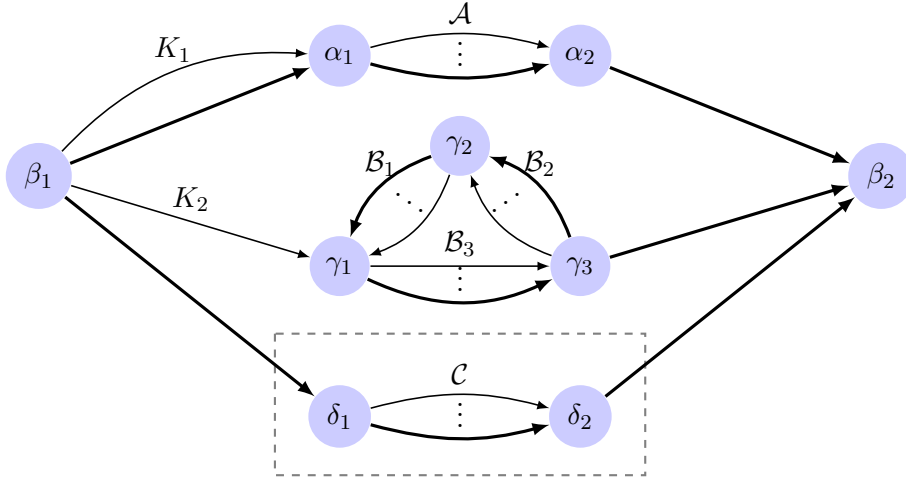
For ease of notation, we will refer to vertices by lower-case Greek letters, and edges by upper-case Roman letters. Let $\ell^*_{\Gamma_1}$ be the edge set of the subgraph enclosed by the dotted line (we will see its structure depends only on $\Gamma_1$). Let $\mathcal{T}$ be the collection of bolded edges. Note that each of $\mathcal{A},\mathcal{B}_1,\mathcal{B}_2,\mathcal{B}_3$ and $\mathcal{C}$ are collections of edges:
\begin{align*}
&\mathcal{A} = \{A_g : g \in \Gamma_1\}\\
&\mathcal{B}_i = \{B_{i,g} : g \in \Gamma_2\} : i=1,2,3\\
&\mathcal{C} = \{C_g : g \in \Gamma_1\}
\end{align*}
$\mathcal{T}$ has a single element in each of these: the elements $A_\id,B_{1,\id},B_{2,\id},B_{3,\id}$, and $C_\id$. Therefore, we have added a single bold line in each collection in the diagram. \par

Next, we define $\Lambda_{\Gamma_1,\Gamma_2}$ as the union of $\Lambda_{\Gamma_1,\Gamma_2}^*$ and the following new edges:
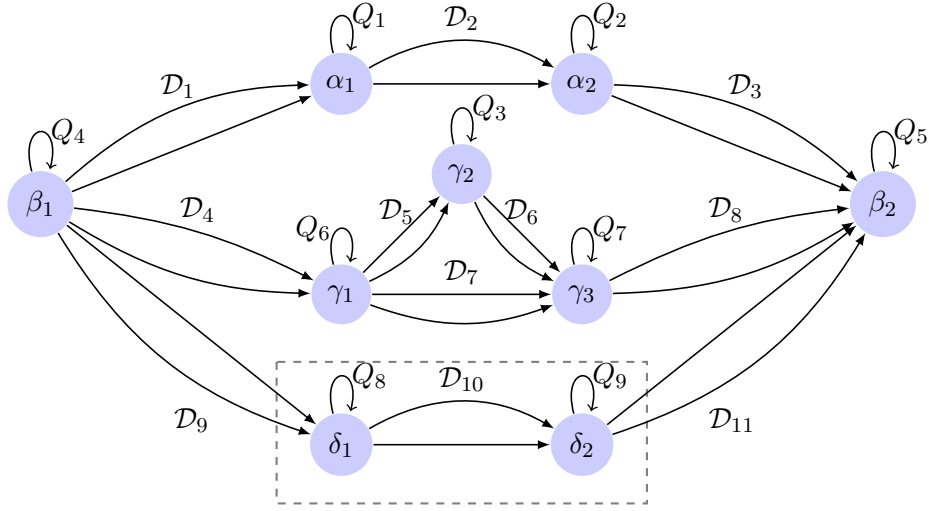
\begin{figure}[htb]
\centering
\begin {tikzpicture}[-latex ,auto,on grid , scale=0.8,
semithick ,
state/.style ={ circle ,top color =white , bottom color = processblue!20 ,
draw,processblue , text=blue , minimum width =1 cm}]
  \node [style1] (n11) at (18,0) {$\alpha_1$};
  \node [style1] (n12) at (22,0) {$\alpha_2$};
  \node [style1] (n21) at (13,-2)  {$\beta_1$};
  \node [style1] (n22) at (27,-2)  {$\beta_2$};
  \node [style1] (n31) at (18,-3.5)  {$\gamma_1$};
  \node [style1] (n32) at (20,-1.5)  {$\gamma_2$};
  \node [style1] (n33) at (22,-3.5)  {$\gamma_3$};
  \node [style1] (n41) at (18,-6) {$\delta_1$};
  \node [style1] (n42) at (22,-6)  {$\delta_2$};

\draw[gray,thick,dashed] ($(n41.north west)+(-0.7,1.0)$)  rectangle ($(n42.south east)+(0.7,-0.6)$);

\path (n21) edge [bend left =20] node[above] {$\mathcal{D}_1$} (n11);
\path (n21) edge [bend left =0] node[above] {} (n11);

\path (n11) edge [bend left =30] node[above] {$\mathcal{D}_2$} (n12);
\path (n11) edge [bend left =0] node[above] {} (n12);

\path (n12) edge [bend left =20] node[above] {$\mathcal{D}_3$} (n22);
\path (n12) edge [bend left =0] node[above] {} (n22);

\path (n31) edge [bend left =0] node[above] {$\mathcal{D}_5\text{ }$} (n32);
\path (n31) edge [bend left =-20] node[above] {} (n32);
\path (n32) edge [bend left =0] node[above] {$\mathcal{D}_6$} (n33);
\path (n32) edge [bend left =-20] node[above] {} (n33);
\path (n31) edge [bend left =0] node[above] {$\mathcal{D}_7$} (n33);
\path (n31) edge [bend left =-20] node[above] {} (n33);

\path (n21) edge [bend left =10] node[above] {$\mathcal{D}_4$} (n31);
\path (n21) edge [bend left =-15] node[above] {} (n31);

\path (n33) edge [bend left =10] node[above] {$\mathcal{D}_{8}$} (n22);
\path (n33) edge [bend left =-15] node[above] {} (n22);

\path (n21) edge [bend left =-20] node[below] {} (n41);
\path (n21) edge [bend left =0] node[below] {\begin{tabular}{c}
      \\
      \\
    $\mathcal{D}_9$
\end{tabular}} (n41);

\path (n42) edge [bend left =-20] node[below] {} (n22);
\path (n42) edge [bend left =0] node[below] {\begin{tabular}{c}
      \\
      \\
      $\mathcal{D}_{11}$
\end{tabular}} (n22);

\path (n41) edge [bend left =30] node[above] {$\mathcal{D}_{10}$} (n42);
\path (n41) edge [bend left =0] node[above] {} (n42);

\path (n11) edge [loop above] node[right] {$Q_1$} (n11);
\path (n12) edge [loop above] node[right] {$Q_2$} (n12);
\path (n21) edge [loop above] node[right] {$Q_4$} (n21);
\path (n22) edge [loop above] node[right] {$Q_5$} (n22);
\path (n31) edge [loop above] node[left] {$Q_6$} (n31);
\path (n32) edge [loop above] node[right] {$Q_3$} (n32);
\path (n33) edge [loop above] node[right] {$Q_7$} (n33);

\path (n41) edge [loop above] node[right] {$Q_8$} (n41);
\path (n42) edge [loop above] node[right] {$Q_{9}$} (n42);

\end{tikzpicture}
\caption{The graph $\Lambda_{\Gamma_1,\Gamma_2} \setminus \Lambda^{*}_{\Gamma_1,\Gamma_2}$}
\label{fig:LambdaG1G2}
\end{figure}

Each $\mathcal{D}_i : 1 \leq i \leq 11$ is a collection of two edges: $\mathcal{D}_i = \{D_{i,1}, D_{i,2}\}$. Each $Q_i$ is a single loop edge. Let $\ell_{\Gamma_1} = \ell^*_{\Gamma_1} \cup \{Q_8,Q_{9}\} \cup \mathcal{D}_{10}$. We will use these new edges at only one point in the argument.

Suppose we have an infinite group $\Gamma_3$ containing $\Gamma_2$, and $\mathfrak{M}$ some element in $\Gamma_3$. Then, $\Lambda^*_{\Gamma_1,\Gamma_2}$ has a $\Gamma_2$\dash gaining $\sigma^* = \sigma^*(\Gamma_1,\Gamma_2,\mathfrak{M})$, defined as follows:
\begin{align*}
    &\sigma^*(T) = \id \ \forall \ T \in \mathcal{T}\\
    &\sigma^*(A_g) = g \ \forall \ g \in \Gamma_1\\
    &\sigma^*(B_g) = g \ \forall \ g \in \Gamma_2\\
    &\sigma^*(C_g) = g \ \forall \ g \in \Gamma_1
\end{align*}
And $\sigma^*(K_1) = \sigma^*(K_2)=\mathfrak{M}$. Note all these edges are oriented as in the diagrams. Secondly, given elements $d_{i,j} : 1 \leq i \leq 11, 1 \leq j \leq 2$, and $q_i : 1 \leq i \leq 9$ of $\Gamma$, we can define $\sigma = \sigma(\Gamma_1,\Gamma_2,\mathfrak{M}, \{d_{i,j}\}, \{q_i\})$, a $\Gamma_3$\dash gaining of $H_{n,N}$ extending $\sigma^*$, by
\begin{align*}
    &\sigma(D_{i,j}) = d_{i,j} \ \forall \ 1 \leq i \leq 11, 1 \leq j \leq 2\\
    &\sigma(Q_i) = q_i \ \forall \ 1 \leq i \leq 9
\end{align*}
All these edges are also oriented as in the diagrams.
We will always assume
\begin{equation}
\text{\begin{tabular}{c}
    $\mathfrak{M} \not \in \Gamma_2$, and\\
     the $\{d_{i,j}\}$, and $\{q_i\}$ are chosen so that \\
     no cycle containing a $D_{i,j}$ or $Q_i$ is balanced
\end{tabular}} \tag{$\dagger$}
\end{equation}
Subject to this assumption, the balanced cycles of $\sigma$ depend only on the isomorphism type of the pair $(\Gamma_1,\Gamma_2)$.
\end{definition}

Note that by Remark~\ref{rmk:can-find-monsters}, we can always find such $\{d_{i,j}\}$ and $\{q_i\}$.

\begin{proposition}\label{prop:diagonal-conviviality-graph}
Let $\Gamma$ be an infinite group, and let $\Gamma_0$ be a finite subgroup.
Fix two representatives $(H_{1},\phi_{1})$ and $(H_{2},\phi_{2})$ of vertices of the elementary $\Gamma_0$\dash conviviality graph.
Fix, for each $k=1,2$, some data $(\mathfrak{M}^{(k)},\{d_{i,j}^{(k)}\},\{q_i^{(k)}\})$ satisfying condition $(\dagger)$ in \textup{Definition~\ref{def:conv-gadget}} with respect to the pair $(\Gamma_0,H_{k})$, and let $(\Lambda_{k},\sigma_k)$ be the gained graph
\[(\Lambda_{\phi_k(\Gamma_0),H_k},\sigma(\phi_k(\Gamma_0),H_k,\mathfrak{M}^{(k)},\{d_{i,j}^{(k)}\},\{q_i^{(k)}\})).\]
We assume $\ell_{\phi_1(\Gamma_0)} = \ell_{\phi_2(\Gamma_0)}$.
Then $(H_{1},\phi_{1})$ and $(H_{2},\phi_{2})$ are $\Gamma_0$\dash convivial in $\Gamma$ if and only if the matroid amalgam of $F(\Lambda_1,\sigma_1)$ and $F(\Lambda_2,\sigma_2)$ over $\ell_{\phi_1(\Gamma_0)}$ is $\Gamma$-gainable.
\end{proposition}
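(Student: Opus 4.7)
The plan is to adapt the amalgam construction from the proof of Theorem~\ref{thm:uniformly-locally-finite-2}, with the gadget $\Lambda_{\Gamma_1,\Gamma_2}$ playing the role of $H_{n,N}$.  The gadget is designed so that a $\Gamma$-gaining encoding $\sigma_k$ corresponds to a chain of embeddings $\phi_k(\Gamma_0)\hookrightarrow H_k\hookrightarrow\Gamma$: the triangle $\gamma_1\gamma_2\gamma_3$ carrying $\mathcal{B}_1,\mathcal{B}_2,\mathcal{B}_3$ encodes the multiplication of $H_k$; the edges $K_1,K_2$ synchronise the $\mathcal{A}$- and $\mathcal{C}$-bundles (which carry $\phi_k(\Gamma_0)$); and the $\mathcal{C}$-edges lie in the shared base $\ell$, so agreement of the two gainings on $\ell$ is exactly the conviviality identity $\theta_1\circ\phi_1=\theta_2\circ\phi_2$.

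\textbf{Backward direction.}  Suppose $(X,\tau)$ is a $\Gamma$-gain graph realising the matroid amalgam; we may assume $X$ has no isolated vertices.  Each $\Lambda_k$ satisfies the hypotheses of Lemma~\ref{lem:amalgam-is-biased}: every vertex carries an unbalanced loop $Q_i$ and the pairs $\mathcal{D}_j$ supply the required unbalanced $2$-cycles, so $X\cong\Lambda_1\oplus_{\delta_1,\delta_2}\Lambda_2$.  By Fact~\ref{fact:identity-tree} we may switch so that $\tau$ kills every $\mathcal{T}$-edge together with $C_{\id}$.  For $g\in H_k$, set $\theta_k(g):=\tau(B_{1,g})$: the balanced $3$-cycles $B_{1,g}B_{2,h}B_{3,gh}^{-1}$ inside the triangle make $\theta_k$ a homomorphism, and the $2$-cycles $B_{1,g}B_{1,h}^{-1}$ (unbalanced under $\sigma_k$ whenever $g\ne h$, hence unbalanced under $\tau$) make it injective, so $\theta_k$ is a monomorphism $H_k\to\Gamma$.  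A balanced cycle through $K_1$, $K_2^{-1}$, $B_{3,\id}$ and bold edges forces $\tau(K_1)=\tau(K_2)$; the balanced cycle $A_gK_1^{-1}K_2B_{3,g}^{-1}$ closed by bold edges then gives $\tau(A_g)=\tau(B_{3,g})=\theta_k(g)$ for $g\in\phi_k(\Gamma_0)$; and the balanced cycle $A_gC_g^{-1}$ closed by bold edges gives $\tau(C_g)=\theta_k(g)$.  Since $\mathcal{C}\subseteq\ell$, the equalities $\tau(C_{\phi_1(g)})=\tau(C_{\phi_2(g)})$ for $g\in\Gamma_0$ amount to $\theta_1\circ\phi_1=\theta_2\circ\phi_2$, witnessing $\Gamma_0$-conviviality of $(H_1,\phi_1)$ and $(H_2,\phi_2)$ in $\Gamma$.

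\textbf{Forward direction.}  Given monomorphisms $\theta_1,\theta_2$ with $\theta_1\circ\phi_1=\theta_2\circ\phi_2$, use the infinitude of $\Gamma$ to pick $\mathfrak{M}^{\prime}_1,\mathfrak{M}^{\prime}_2\in\Gamma$ avoiding the finite forbidden sets $\theta_1(H_1)\theta_2(H_2)$, $\theta_2(H_2)\theta_1(H_1)$, and $\mathfrak{M}^{\prime}_2\cdot\theta_2(H_2)\theta_1(H_1)$.  Define $\tau_k$ on $\Lambda_k$ by $\tau_k(A_g)=\tau_k(B_{i,g})=\tau_k(C_g)=\theta_k(g)$, $\tau_k(\mathcal{T})=\{\id\}$, $\tau_k(K_1)=\tau_k(K_2)=\mathfrak{M}^{\prime}_k$, and choose generic values on $\mathcal{D}_j$ and $Q_i$ satisfying $(\dagger)$ via Remark~\ref{rmk:can-find-monsters}.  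A case-by-case check on cycle types---using that $\theta_k$ is a monomorphism and that $\mathfrak{M}^{\prime}_k\notin\theta_k(H_k)$ plays the structural role of $\mathfrak{M}\notin H_k$---yields $F(\Lambda_k,\tau_k)=F(\Lambda_k,\sigma_k)$.  The conviviality identity gives $\tau_1|_\ell=\tau_2|_\ell$, so we form $\tau=\tau_1\cup\tau_2$ on $\Lambda_1\oplus_{\delta_1,\delta_2}\Lambda_2$ and invoke Lemma~\ref{lem:graph-amalgam}: hypotheses (i)--(iii) are immediate (for (iii), use $Q_8,Q_9$).  For (iv), if $\tau_1(P_1)=\tau_2(P_2)$ then $P_1\cup P_2$ is a balanced cycle, so by $(\dagger)$ each $P_k\subseteq\Lambda_k^*$.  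Inspection of $\Lambda_k^*$ shows that a simple $\delta_1$-to-$\delta_2$ path therein either avoids $\{K_1,K_2\}$ (with gain in $\theta_k(\phi_k(\Gamma_0))$) or traverses exactly one of them (with gain in $\mathfrak{M}^{\prime}_k\theta_k(H_k)$).  The constraints on the $\mathfrak{M}^{\prime}_k$ rule out every gain-match save $\mathcal{C}$-edge with $\mathcal{C}$-edge, whose common gain is realised by a $\mathcal{C}$-edge of $\ell$.  Hence $F(\Lambda_1\oplus_{\delta_1,\delta_2}\Lambda_2,\tau)=\amal{F(\Lambda_1,\sigma_1)}{F(\Lambda_2,\sigma_2)}[\ell]$ is $\Gamma$-gainable.

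\textbf{Main obstacle.}  The delicate step is hypothesis (iv) of Lemma~\ref{lem:graph-amalgam}: the gadget supports several topologically distinct simple $\delta_1$-to-$\delta_2$ paths (direct via $\mathcal{C}$, indirect via bold edges and $\mathcal{A}$, through $K_1$, or through $K_2$ into the triangle), and we must choose $\mathfrak{M}^{\prime}_1,\mathfrak{M}^{\prime}_2$ so that no two paths from opposite gadgets coincide in gain unless both avoid $\{K_1,K_2\}$ entirely.  This uses crucially that $\Gamma$ is infinite while each $\theta_k(H_k)$ is finite.  A secondary subtlety, in the backward direction, is producing enough balanced cycles in $\Lambda_k$ to extract both the homomorphism property of $\theta_k$ and the synchronising identity $\tau(C_g)=\theta_k(g)$ via the $K_1, K_2$ route.
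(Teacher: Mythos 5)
Your proof follows essentially the same route as the paper's: in one direction you use Lemma~\ref{lem:amalgam-is-biased} and Fact~\ref{fact:identity-tree} to recover $\Lambda_1\oplus\Lambda_2$ and normalise the gaining on $\mathcal{T}\cup\mathcal{T}'$, then read off the monomorphisms from the balanced triangle cycles and the $K_1,K_2$ synchronisation; in the other you replace the gains on $K_1,K_2,K_1',K_2'$ by fresh elements $\mathfrak{M}'_1,\mathfrak{M}'_2$, check the frame matroids are unchanged, and invoke Lemma~\ref{lem:graph-amalgam}, with condition (iv) handled exactly as in the paper. The only difference is cosmetic but actually an improvement: you choose $\mathfrak{M}'_1,\mathfrak{M}'_2$ outside finitely many finite product sets (always possible in an infinite group), whereas the paper takes them outside $\langle H_1,H_2\rangle$ and $\langle H_1,H_2,\mathfrak{M}'_1\rangle$, subgroups which could in principle exhaust $\Gamma$.
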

\begin{proof}
Fix $(H_1,\phi_1)$, $(H_2,\phi_2)$, $\mathfrak{M}^{(k)}$, $\{d_{j,k}^{(k)}\}$ and $\{q_j^{(k)}\}$, and $(\Lambda_1,\sigma_1)$, $(\Lambda_2,\sigma_2)$ as above.
For each $k=1,2$, let
\[(\Lambda_k^*,\sigma_k^*) = (\Lambda^*_{\phi_k(\Gamma_0),H_k},\sigma^*(\phi_k(F),H_k,\mathfrak{M}^{(k)})).\]
As in Theorem ~\ref{thm:uniformly-locally-finite-2}, let us distinguish the edges in $\Lambda_1$ and $\Lambda_2$ by adding a tick to those edges in $\Lambda_2$.
We have assumed that $\ell_{\sigma_1(\Gamma_0)}$ and $\ell_{\sigma_2(\Gamma_0)}$ are equal.
In addition, we assume $V(\Lambda_1) \cap V(\Lambda_2) = \{\delta_1,\delta_2\}$.
Therefore, for example, $Q_8 = Q_8'$ and $\delta_1=\delta_1'$.
Whenever we gain-graph amalgamate in this proof, it will be over the base $\{\delta_1,\delta_2\}$. Whenever we matroid amalgamate, it will be over the base $\ell_{\phi_1(\Gamma_0)}$, which is the set of edges adjacent only to $\delta_1$ and $\delta_2$.

The proof has two directions. First, suppose the matroid amalgam is $\Gamma$\dash gain-graphic. Let this be witnessed by a graph $X$ with $\Gamma$\dash gaining $\tau_0$, without loss of generality with no isolated vertices. 
As in Theorem ~\ref{thm:uniformly-locally-finite-2}, we wish to apply Lemma~\ref{lem:amalgam-is-biased}. We note that $(\Lambda_1,\sigma_1)$ and $(\Lambda_2,\sigma_2)$ have no balanced loop edges. The desired unbalanced loops are supplied by $Q_1, \ldots, Q_{10},Q_1', \ldots, Q_{10}'$. The desired unbalanced cycles are supplied by the $\mathcal{D}_i,\mathcal{D}_i'$. By definition, $X$ has no isolated vertices and
\[F(X,\tau_0) \cong \amal{F(\Lambda_1,\sigma_1)}{F(\Lambda_2,\sigma_2)}[\ell_{\phi_1(\Gamma_0)}].\]
Thus Lemma~\ref{lem:amalgam-is-biased} implies $X \cong \Lambda_1 \oplus \Lambda_2$.

Define $\tau$ to be the restriction $\tau_0|_{\Lambda_1}$, and let $\tau'$ be $\tau_0|_{\Lambda_2}$.
By Fact~\ref{fact:identity-tree}, we may assume that $\tau_0(\mathcal{T} \cup \mathcal{T}') = \{\id\}$.
For any fixed element $g \in \phi_1(\Gamma_0)$, by considering the balanced cycle $\delta_1\ C_g\ \delta_2 \beta_2 \alpha_2\ A_g\ \alpha_1 \beta_1 \delta_1$ (where we omit an edge if it is in $\mathcal{T}$), we deduce that $\tau(C_g) = \tau(A_g)$.
Similarly, the balanced cycle $\alpha_1\ A_\id\ \alpha_2 \beta_2 \gamma_3\ B_{3,\id}\ \gamma_1\ K_2\ \beta_1\ K_1$ implies that $\tau(K_1) = \tau(K_2)$, and also the balanced cycle $\alpha_1\ A_g\ \alpha_2 \beta_2 \gamma_3\ B_{3,g}\ \gamma_1\ K_2\ \beta_1\ K_1$ implies that $\tau(B_{3,g}) = \tau(A_g)$.
Finally, by considering the list of balanced cycles $\gamma_1\ B_{3,g}\ \gamma_3\ B_{2,h}\ \gamma_2\ B_{1,g+h}$, and noting that $\tau(B_{i,\id}) = \id$ for all $i$, we deduce that for all $g \in \phi_1(\Gamma_0)$, and all $i,j$, $\tau(B_{i,g}) = \tau(B_{j,g})$, and also for all $g,h \in H_1$, $\tau(B_{1,g})\tau(B_{1,h}) = \tau(B_{1,gh})$.
Similarly for all $g \in \phi_2(\Gamma_0)$, $\tau'(A'_g) = \tau'(B'_{3,g}) = \tau'(C'_g)$; for all $g \in H_2$, and all $i,j$, $\tau'(B_{i,g}') =\tau'(B_{j,g}')$; for all $g,h \in H_2$, $\tau'(B_{i,g})\tau'(B_{i,h}) = \tau'(B_{i,gh})$. For $i = 1,2$, let $\chi_i : H_i \rightarrow \Gamma$ be the function defined by
\[
             g \mapsto \begin{cases}
                 \tau(B_g) : i = 1\\
                 \tau'(B'_g) : i=2
             \end{cases}
\]
By the above results, for each $i$, $\chi_i$ is an embedding of $H_i$ into $\Gamma$, and since for all $g \in \Gamma_0$ we have $C_{\phi_1(g)} = C_{\phi_2(g)}'$, it follows that $\chi_1 \circ \phi_1 = \chi_2 \circ \phi_2$. Thus $(H_1,\phi_1)$ and $(H_2,\phi_2)$ are $\Gamma_0$\dash convivial in $\Gamma$.

Now we prove the other direction. Let $(H_1,\phi_1)$ and $(H_2,\phi_2)$ be $\Gamma_0$\dash convivial in $\Gamma$, witnessed by embeddings $\chi_i : H_i \rightarrow \Gamma$ such that $\chi_1 \circ \phi_1 = \chi_2 \circ \phi_2$.
For ease of notation, let us identify $H_i$ with its image under $\chi_i$ for each $i$, so now each $H_i$ is a specific subgroup of $\Gamma$. Note that since $\chi_1 \circ \phi_1 = \chi_2 \circ \phi_2$, this is consistent with our assumption $\ell_{\phi_1(\Gamma_0)} = \ell_{\phi_2(\Gamma_0)}$. We will define a $\Gamma$\dash gaining $\tau_0$ on $\Lambda_1 \oplus \Lambda_2$ such that $F(\Lambda_1 \oplus \Lambda_2,\tau_0) \cong \amal{F(\Lambda_1,\sigma_1)}{F(\Lambda_2,\sigma_2)}$.

As in Theorem~\ref{thm:uniformly-locally-finite-2}, we need to change the gaining of a few edges to make sure there are no `unintentional' balanced cycles.

Fix some $\mathfrak{M}'_1 \in \Gamma \setminus \langle H_1,H_2 \rangle$, $\mathfrak{M}'_2 \in \Gamma \setminus \langle H_1,H_2,\mathfrak{M}'_1 \rangle$. Let
\[\Delta_+ = \{(\gamma_1,\gamma_2),(\gamma_2,\gamma_3),(\gamma_3,\gamma_1)\}.\]
Then define a partial gaining $\tau^*$ which maps $(E,u,v)$ to the following elements:
\[
\begin{cases}
        \id : E \in \mathcal{T}\\
        g : E \in\{ B_{1,g},B_{2,g},B_{3,g}\}\ \text{ for some } g \in H_1, (u,v) \in \Delta_+\\
        \phi_1(g) : (E,u,v) \in\{ (A_{\phi_1(g)},\alpha_1,\alpha_2),(C_{\phi_1(g)},\delta_1,\delta_2)\} \text{ for some } g \in \Gamma_0\\
        \mathfrak{M}'_1 : (E,u,v) \in \{(K_1,\beta_1,\alpha_1),(K_2,\beta_1,\gamma_1)\}
    \end{cases}
\]
We extend it to a full gaining using the rule $\tau^*(E,u,v) = \tau^*(E,v,u)^{-1}$.
Let $\tau{{}^*}'$ be defined symmetrically, with $\tau{{}^*}'(K_1') = \tau{{}^*}'(K_2') = \mathfrak{M}'_2$. Note they agree on $\ell_{\phi_1(\Gamma_0)}$, the amalgamation base. Thus, we can define $\tau^*_0 = \tau^* \cup \tau{{}^*}'$. Next, we define $\tau_0$ by extending $\tau_0^*$ such that any cycle containing any $D_{i,j},D'_{i,j},Q_i$ or $Q_i'$ is unbalanced. We can do this since $\Gamma$ is infinite.
Now we must show $F(\Lambda_1 \oplus \Lambda_2,\tau_0)$ is as desired.
\begin{claim}
    $F(\Lambda_1, \tau_0|_{\Lambda_1}) \cong F(\Lambda_1,\sigma_1)$, and similarly for $\Lambda_2$
\end{claim}
\begin{proof}
    By symmetry, it suffices to prove the result for $\Lambda_1$. Note that the LHS and RHS both agree that any cycle containing any $D_{i,j}$ or $Q_i$ is unbalanced, so it suffices to check $F(\Lambda^*_1, \tau_0|_{\Lambda^*_1}) \cong F(\Lambda^*_1,\sigma^*_1)$.
    But note that $\tau_0|_{\Lambda^*_1} = \sigma^*(\phi_1(\Gamma_0),H_1,\mathfrak{M}'_1)$.
    Since $\mathfrak{M}'_1 \not \in H_1$, the balanced cycles induced by this gaining depend only on the isomorphism type of $(\phi_1(\Gamma_0),H_1)$ (as noted in Definition~\ref{def:conv-gadget}). Thus, since $(\phi_1(\Gamma_0),H_1) \cong (\phi_1(\Gamma_0),H_1)$, and $\sigma_1^* = \sigma^*(\phi_1(\Gamma_0),H_1,\mathfrak{M}^{(1)})$ we have the desired isomorphism.
\end{proof}
This implies
\begin{align*}
\amal{F(\Lambda_1,\sigma_1)}{F(\Lambda_2,\sigma_2)} \cong \amal{F(\Lambda_1,\tau_0|_{\Lambda_1})}{F(\Lambda_2,\tau_0|_{\Lambda_2})}    
\end{align*}
Now, we aim to apply Lemma~\ref{lem:graph-amalgam}, with $\Omega_i = (\Lambda_1,\tau_0|_{\Lambda_i})$ for $i=1,2$. The conclusion of that lemma would imply
\[\amal{F(\Lambda_1,\tau_0|_{\Lambda_1})}{F(\Lambda_2,\tau_0|_{\Lambda_2})} \cong F(\Lambda_1 \oplus \Lambda_2,\tau_0),\]
proving the proposition. So it suffices to check that for any two paths $P_1 \subseteq \Lambda_1$, $P_2 \subseteq \Lambda_2$, each going from $\delta_1$ to $\delta_2$, if $P_1 \cup P_2$ is balanced, then there is an edge $E$ going from $\delta_1$ to $\delta_2$ such that $\tau_0(P_1)=\tau_0(P_2)=\tau_0(E)$. Fix some such $P_1$, $P_2$.
Note that if either contains any $D_{i,j},D'_{i,j},Q_i$ or $Q_i'$, then $P_1 \cup P_2$ is automatically unbalanced, which would yield a contradiction. Since $\mathfrak{M}'_2 \not \in \langle H_1,H_2,\mathfrak{M}'_1 \rangle$, by the same reasoning we know $K_1',K_2' \not \in P_2$. Similarly $K_1,K_2 \not\in P_1$.
By inspecting Definition~\ref{def:conv-gadget}, we deduce that $\tau_0(P_1) \in \phi_1(\Gamma_0)$, since it can only consist of an element of $\mathcal{C}$, or an element of $\mathcal{A}$ and some elements of $\mathcal{T}$. Let $\tau_0(P_1) = \phi_1(g)$. Then $\tau_0(P_1) = \tau_0(P_2) = \tau_0(C_{\phi_1(g)})$, as desired.
Thus, we have satisfied the conditions of the lemma, so $\amal{F(\Lambda_1,\tau_0|_{\Lambda_1})}{F(\Lambda_2,\tau_0|_{\Lambda_2})} \cong F(\Lambda_1 \oplus \Lambda_1,\tau_0)$, which concludes the proof.
\end{proof}

\begin{theorem}\label{thm:conviviality-graph}
Let $\Gamma$ be a group.
If $\Gamma$ has a finite subgroup $\Gamma_0$ such that the $\Gamma_0$\dash conviviality graph of $\Gamma$ is infinite, then the class of $\Gamma$\dash gain-graphic matroids is not \cmso\dash definable.
\end{theorem}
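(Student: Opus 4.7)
The plan is to mimic the proof of Theorem~\ref{thm:uniformly-locally-finite-2}, replacing its gadget with the conviviality-graph gadget of Definition~\ref{def:conv-gadget} and invoking Proposition~\ref{prop:diagonal-conviviality-graph} at the key step. Suppose for a contradiction that the class $\mathcal{M}$ of $\Gamma$\dash gain-graphic matroids is \cmso\dash definable. Since the $\Gamma_{0}$\dash conviviality graph of $\Gamma$ is infinite, the equivalence relation $\approx$ has infinitely many classes, and I pick a sequence of representatives $\{(H_{i},\phi_{i}) : i \in \bN\}$ from pairwise distinct $\approx$\dash classes. By the definition of $\approx$, for any $i\ne j$ some vertex $(H_{k},\phi_{k})$ of the elementary conviviality graph is $\Gamma_{0}$\dash convivial with exactly one of $(H_{i},\phi_{i})$ and $(H_{j},\phi_{j})$.

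For each $i$, I choose data $\mathfrak{M}^{(i)}$, $\{d_{a,b}^{(i)}\}$, $\{q_{a}^{(i)}\}$ satisfying condition~$(\dagger)$ of Definition~\ref{def:conv-gadget} (possible by Remark~\ref{rmk:can-find-monsters}), and form the gained graph $(\Lambda_{i},\sigma_{i})=(\Lambda_{\phi_{i}(\Gamma_{0}),H_{i}},\sigma(\phi_{i}(\Gamma_{0}),H_{i},\mathfrak{M}^{(i)},\{d_{a,b}^{(i)}\},\{q_{a}^{(i)}\}))$. Since $H_{i}$ embeds in $\Gamma$, each $\sigma_{i}$ is naturally a $\Gamma$\dash gaining. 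I use the isomorphism $\phi_{i}\colon\Gamma_{0}\to\phi_{i}(\Gamma_{0})$ to relabel the edges of $\mathcal{C}$ so that the subgraphs $\ell_{\phi_{i}(\Gamma_{0})}$ coincide across all $i$ as a common graph on vertex set $\{\delta_{1},\delta_{2}\}$, which I will simply call $G_{\ell}$ with edge-set $\ell$. Although the gain values on $\ell$ depend on $i$, condition~$(\dagger)$ makes every non-trivial cycle in $\ell$ unbalanced, so the matroid restriction $F(\Lambda_{i},\sigma_{i})|\ell$ is the same for all $i$. This satisfies the standing hypothesis of Proposition~\ref{prop:diagonal-conviviality-graph} for every pair $(i,k)$ and, together with the unbalanced loops $Q_{8}$ at $\delta_{1}$ and $Q_{9}$ at $\delta_{2}$, the hypotheses of Lemma~\ref{lem:Myhill-Nerode-application}.

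Applying Lemma~\ref{lem:Myhill-Nerode-application} will yield a finite partition $\bN = J_{1}\cup\cdots\cup J_{n}$ such that, for each part $J_{p}$, all $i,j\in J_{p}$, and all $k\in\bN$,
\[\amal{F(\Lambda_{i},\sigma_{i})}{F(\Lambda_{k},\sigma_{k})}[\ell] \in \mathcal{M}\ \leftrightarrow\ \amal{F(\Lambda_{j},\sigma_{j})}{F(\Lambda_{k},\sigma_{k})}[\ell] \in \mathcal{M}.\]
Proposition~\ref{prop:diagonal-conviviality-graph} reinterprets each side as asking whether the corresponding pair is $\Gamma_{0}$\dash convivial with $(H_{k},\phi_{k})$. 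Hence any two representatives in the same part have identical conviviality neighbourhoods in the elementary conviviality graph, and so are $\approx$\dash equivalent. Since the representatives were chosen pairwise $\not\approx$, each $J_{p}$ is a singleton, contradicting the finiteness of the partition.

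I expect the main obstacle to be the bookkeeping required to identify $\ell_{\phi_{i}(\Gamma_{0})}$ uniformly across the family, since the actual gain labels on those edges genuinely depend on $i$. The clean way through is the observation just made: condition~$(\dagger)$ forces every cycle in $\ell$ to be unbalanced, so only the underlying graph (and therefore the restricted matroid $F(\Lambda_{i},\sigma_{i})|\ell$) is relevant to the matroid amalgam, not the precise gains.
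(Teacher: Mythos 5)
Your proposal is correct and follows essentially the same route as the paper: both derive a finite partition of the elementary conviviality graph's vertices from Lemma~\ref{lem:Myhill-Nerode-application}, translate membership of the amalgam in the class into $\Gamma_0$\dash conviviality via Proposition~\ref{prop:diagonal-conviviality-graph}, and conclude that conviviality neighbourhoods (hence $\approx$\dash classes) are determined by the partition, contradicting the infinitude of the conviviality graph. Your extra care over identifying the bases $\ell_{\phi_i(\Gamma_0)}$ and checking the hypotheses of Lemma~\ref{lem:Myhill-Nerode-application} is a correct elaboration of details the paper leaves implicit.
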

\begin{proof}
Note that if $\Gamma$ is finite, then for any $\Gamma_0 \leq \Gamma$, the $\Gamma_0$\dash conviviality graph of $\Gamma$ must be finite, so the theorem is vacuously true. Therefore, suppose $\Gamma$ is infinite.
Fix the pairs $(H_1,\phi_1)$, $(H_2,\phi_2)$, representatives of vertices in the elementary $\Gamma_0$\dash conviviality graph of $\Gamma$. Also fix, for each $k=1,2$, some data \[(\mathfrak{M}^{(k)},\{d_{i,j}^{(k)}\},\{q_i^{(k)}\})\] satisfying condition $(\dagger)$ in Definition~\ref{def:conv-gadget} with respect to the pair $(\Gamma_0,H_{k})$, and let \[(\Lambda_{k},\sigma_k) = (\Lambda_{\phi_k(\Gamma_0),H_k},\sigma(\phi_k(\Gamma_0),H_k,\mathfrak{M}^{(k)},\{d_{i,j}^{(k)}\},\{q_i^{(k)}\})).\]
Then, by Proposition~\ref{prop:diagonal-conviviality-graph}, the amalgam $\amal{F(\Lambda_1,\sigma_1)}{F(\Lambda_2,\sigma_2)}$ is $\Gamma$\dash gain-graphic if and only if $(H_1,\phi_1)$, $(H_2,\phi_2)$ are $\Gamma_0$\dash convivial.
Note by Lemma~\ref{lem:Myhill-Nerode-application} if the class of $\Gamma$\dash gain-graphic matroids is \cmso\dash definable, there is a finite partition of $\{(H,\phi) \text{ in the elementary $\Gamma_0$\dash conviviality graph of $\Gamma$}\}$ such that whether $\amal{F(\Lambda_1,\sigma_1)}{F(\Lambda_2,\sigma_2)}$ is $\Gamma$\dash gain-graphic depends only on which classes $(H_1,\phi_1)$ and $(H_2,\phi_2)$ are in. But then, by the Proposition, whether $(H_1,\phi_1)$ and $(H_2,\phi_2)$ are $\Gamma_0$\dash convivial would depend only on which classes they are in, which would imply the $\Gamma_0$\dash conviviality graph is finite.
\end{proof}

\section{Acknowledgements}

We thank Sapir Ben-Shahar, Matthew Conder, Dugald Macpherson, Mike Newman, and Gabriel Verret for valuable conversations.

\begin{bibdiv}

\begin{biblist}

\bib{Aig97}{book}{
   author={Aigner, Martin},
   title={Combinatorial theory},
   series={Classics in Mathematics},
   note={Reprint of the 1979 original},
   publisher={Springer-Verlag, Berlin},
   date={1997},
   pages={viii+483}
}

\bib{vBDFGR15}{article}{
   author={van Bevern, Ren\'e},
   author={Downey, Rodney G.},
   author={Fellows, Michael R.},
   author={Gaspers, Serge},
   author={Rosamond, Frances A.},
   title={Myhill-Nerode methods for hypergraphs},
   journal={Algorithmica},
   volume={73},
   date={2015},
   number={4},
   pages={696--729}
}

\bib{Cou90}{article}{
   author={Courcelle, Bruno},
   title={The monadic second-order logic of graphs. I. Recognizable sets of
   finite graphs},
   journal={Inform. and Comput.},
   volume={85},
   date={1990},
   number={1},
   pages={12--75}
}

\bib{Euclid}{book}{
   author={Euclid},
   title={Elements},
   volume={IX},
   publisher={Megara},
   date={c.~300BCE}
   }

\bib{Fun15}{thesis}{
    title={On excluded minors and biased graph representations of frame matroids},
    author={Funk, Daryl},
    type={PhD thesis},
    date={2015},
    organization = {Simon Fraser University}
}

\bib{FMN21}{article}{
   author={Funk, Daryl},
   author={Mayhew, Dillon},
   author={Newman, Mike},
   title={Defining bicircular matroids in monadic logic},
   journal={Q. J. Math.},
   volume={73},
   date={2022},
   number={1},
   pages={65--92}
}

\bib{FMN22}{article}{
   author={Funk, Daryl},
   author={Mayhew, Dillon},
   author={Newman, Mike},
   title={Tree automata and pigeonhole classes of matroids: I},
   journal={Algorithmica},
   volume={84},
   date={2022},
   number={7},
   pages={1795--1834}
}

\bib{GGW13}{article}{
   author={Geelen, Jim},
   author={Gerards, Bert},
   author={Whittle, Geoff},
   title={Structure in minor-closed classes of matroids},
   conference={
      title={Surveys in combinatorics 2013},
   },
   book={
      series={London Math. Soc. Lecture Note Ser.},
      volume={409},
      publisher={Cambridge Univ. Press, Cambridge},
   },
   date={2013},
   pages={327--362}
}

\bib{GNW24}{article}{
   author={Geelen, Jim},
   author={Nelson, Peter},
   author={Walsh, Zach},
   title={Excluding a line from complex-representable matroids},
   journal={Mem. Amer. Math. Soc.},
   volume={303},
   date={2024},
   number={1523},
   pages={v+91}
}

\bib{Hli03a}{article}{
   author={Hlin\v en\'y, Petr},
   title={Branch-width, parse trees, and monadic second-order logic for
   matroids (extended abstract)},
   conference={
      title={STACS 2003},
   },
   book={
     series={Lecture Notes in Comput. Sci.},
     volume={2607},
      publisher={Springer, Berlin},
   },
   date={2003},
   pages={319--330}
}


\bib{H97}{book}{
    author={Hodges, Wilfrid},
    title={A shorter model theory},
    publisher={Cambridge University Press},
    date={1997}
}

\bib{HU79}{book}{,
    author={Hopcroft, John E.},
    author={Ullman, Jeffrey D.},
    title={Introduction to automata theory, languages, and computation},
    series={Addison-Wesley series in computer science},
    publisher={Addison-Wesley},
    date={1979},
    pages={x+418}
}

\bib{KK82}{article}{
   author={Kahn, J.},
   author={Kung, J. P. S.},
   title={Varieties of combinatorial geometries},
   journal={Trans. Amer. Math. Soc.},
   volume={271},
   date={1982},
   number={2},
   pages={485--499}
}

\bib{Kra12}{article}{
   author={Kr\'al, Daniel},
   title={Decomposition width of matroids},
   journal={Discrete Appl. Math.},
   volume={160},
   date={2012},
   number={6},
   pages={913--923}
}

\bib{KM14}{article}{
   author={Kotek, Tomer},
   author={Makowsky, Johann A.},
   title={Connection matrices and the definability of graph parameters},
   journal={Log. Methods Comput. Sci.},
   volume={10},
   date={2014},
   number={4},
   pages={4:1, 33}
}

\bib{MNW18}{article}{
   author={Mayhew, Dillon},
   author={Newman, Mike},
   author={Whittle, Geoff},
   title={Yes, the `missing axiom' of matroid theory is lost forever},
   journal={Trans. Amer. Math. Soc.},
   volume={370},
   date={2018},
   number={8},
   pages={5907--5929}
}

\bib{Myh57}{article}{
   author={Myhill, John},
   title={Finite automata and the representation of events},
   journal={WADD Technical Report},
   volume={57},
   date={1957},
   pages={112--137}
}

\bib{Ner58}{article}{
   author={Nerode, A.},
   title={Linear automaton transformations},
   journal={Proc. Amer. Math. Soc.},
   volume={9},
   date={1958},
   pages={541--544}
}

\bib{Oxl11}{book}{
   author={Oxley, James},
   title={Matroid theory},
   series={Oxford Graduate Texts in Mathematics},
   volume={21},
   edition={2},
   publisher={Oxford University Press, Oxford},
   date={2011},
   pages={xiv+684}
   }

\bib{Str11}{article}{
   author={Strozecki, Yann},
   title={Monadic second-order model-checking on decomposable matroids},
   journal={Discrete Appl. Math.},
   volume={159},
   date={2011},
   number={10},
   pages={1022--1039}
}

\bib{Zas89}{article}{
   author={Zaslavsky, Thomas},
   title={Biased graphs. I. Bias, balance, and gains},
   journal={J. Combin. Theory Ser. B},
   volume={47},
   date={1989},
   number={1},
   pages={32--52},
}

\bib{Zel90}{article}{
   author={Zel\cprime manov, E. I.},
   title={Solution of the restricted Burnside problem for groups of odd
   exponent},
   language={Russian},
   journal={Izv. Akad. Nauk SSSR Ser. Mat.},
   volume={54},
   date={1990},
   number={1},
   pages={42--59, 221},
   translation={
      journal={Math. USSR-Izv.},
      volume={36},
      date={1991},
      number={1},
      pages={41--60}
   }
}

\bib{Zel91}{article}{
   author={Zel\cprime manov, E. I.},
   title={Solution of the restricted Burnside problem for $2$-groups},
   language={Russian},
   journal={Mat. Sb.},
   volume={182},
   date={1991},
   number={4},
   pages={568--592},
   translation={
      journal={Math. USSR-Sb.},
      volume={72},
      date={1992},
      number={2},
      pages={543--565}
   }
}

\end{biblist}

\end{bibdiv}

\end{document}